\tikzstyle{none}=[inner sep=0pt]
\newcommand{\R}{\mathbb R}
\newcommand{\Z}{\mathbb Z} 
\newcommand{\F}{\mathscr F}
\newcommand{\K}{\mathscr K}
\newcommand{\U}{\mathscr U}
\renewcommand{\L}{\mathscr L}
\newcommand{\A}{\mathscr A}
\newcommand{\ep}{\varepsilon}
\newcommand{\si}{\sigma}
\newcommand{\del}{\partial}
\DeclareMathOperator{\tb}{tb}
\DeclareMathOperator{\rot}{rot}
\newcommand{\co}{\thinspace\colon}
\newtheorem{thm}{Theorem}[section]
\newtheorem{prop}[thm]{Proposition}
\newtheorem{lem}[thm]{Lemma}
\newtheorem{cor}[thm]{Corollary}
\newtheorem{defn}[thm]{Definition}
\newtheorem{con}[thm]{Conjecture} 
\theoremstyle{definition}
\newtheorem{rem}[thm]{Remark}
\definecolor{nicegreen}{rgb}{0.0, 0.5, 0.0}
\definecolor{niceyellow}{rgb}{1.0, 0.65, 0.0}
\definecolor{niceblue}{rgb}{0.0, 0.28, 0.67}
\newcommand{\Addresses}{{
		\bigskip
		\footnotesize
		
		\textsc{Dipartimento di Matematica, Largo Bruno Pontecorvo 5, 56127 Pisa, Italy}\par\nopagebreak
}}
\title{Strongly Invertible Legendrian Links}
\author{Carlo Collari and Paolo Lisca}
\date{\today}
\begin{document}

\begin{abstract}
We introduce and study strongly invertible Legendrian links in the standard contact three-dimensional space. 
We establish the equivariant analogs of basic results separately well-known for strongly invertible and Legendrian links, i.e.~the existence of transvergent front diagrams, an equivariant Legendrian Reidemeister theorem, and an equivariant stabilization theorem à la Fuch-Tabachnikov. 
We also introduce a maximal equivariant Thurston-Bennequin number for strongly invertible links and we exhibit infinitely many such links for which the invariant coincides with the usual maximal Thurston-Bennequin number. 
We conjecture that such a coincidence does not hold general and that there exist strongly invertible knots having Legendrian representatives isotopic to their reversed Legendrian mirrors but not isotopic to any strongly invertible Legendrian knot.
\end{abstract}

\maketitle
\section{Introduction}\label{s:intro} 

Strongly invertible knots and links have been studied for a long time~\cite{Sa86} and they have seen a recent burst of interest~\cite{AB21, BI22, DMS23, DP23-1, DP23-2, DPF23, HHS22, La22, LS22, LW21, MP23, Sn18, Wa17}.
On the other hand, the literature on Legendrian knots and links is quite vast and continuously expanding -- we refer the reader to the surveys~\cite{Et05, EN20}.  
The purpose of this paper is to initiate the study of the {\em strongly invertible Legendrian links} in $\R^3$. 
We start by recalling the definitions of a strongly invertible link and of a Legendrian link.  
A link $L\subset\R^3$ is a finite collection of smoothly embedded circles. 
Let $\tau\thinspace\colon\R^3\to\R^3$ be the involution given by 
\[
\tau(x,y,z) = (x,-y,-z). 
\]
We say that $L$ is {\em strongly invertible} if $\tau(L_i)=L_i$ for each connected component $L_i\subset L$, and $\tau|_{L_i}$ has exactly two fixed points. Two strongly invertible links are {\em equivalent} if there is a family of strongly invertible links that connects them. 
The \emph{standard contact structure} on $\R^3$ is the tangent plane distribution $\xi_{\rm st}:=\ker(dz - ydx)\subset\R^3$.
Note that $\xi_{\rm st}$ is preserved by $\tau$.
A link is \emph{Legendrian} if it is everywhere tangent to $\xi_{\rm st}$. Two Legendrian links are {\em equivalent} if there is a family of Legendrian links that connects them.  In analogy with the above, we introduce the following definition. 

\begin{defn}\label{d:silk}
A {\em strongly invertible Legendrian link} is a link that is simultaneously strongly invertible with respect to $\tau$ and Legendrian with respect to $\xi_{\rm st}$. 
Two strongly invertible Legendrian links are {\em equivalent} if they are connected by a smooth family of strongly invertible Legendrian~links. 
\end{defn}

Following~\cite{BI22}, we say that a  diagram $D$ in the $xz$-plane is~\emph{transvergent} if the reflection map~$(x,z)\mapsto (x,-z)$ fixes $D$ setwise.

Denote by $\pi$ the projection map to the $xz$-plane. 
The image $\F:=\pi(\L)$ of a Legendrian link $\L$ is called the {\em front} of $\L$. 
It is a well-known and easy-to-show fact that, given a front $\F$, there is a unique Legendrian link $\L$ such that $\pi(\L)=\F$ (see e.g.~\cite{Et05}).  
After a small, $C^\infty$-small Legendrian perturbation, a Legendrian link admits a generic front~\cite{Sw92}. 
The diagram associated with a generic front will be called a {\em front diagram}. 
As before, we introduce the following definition in analogy to the smooth case. 

\begin{defn}
A front $\F$ of a strongly invertible Legendrian link is a~\emph{transvergent front} if 
the reflection $(x,z)\mapsto (x,-z)$ fixes $\F$ setwise.
\end{defn}

Two among the simplest examples of transvergent front diagrams are shown in Figure~\ref{f:unknots}. More complicated examples are shown in Figures~\ref{f:SILtorusK}, \ref{f:SILtwistK}, and~\ref{f:m9_44-pair}.  

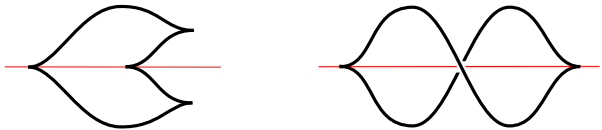
\begin{figure}[ht]
\centering
\begin{minipage}[c]{4cm}
\begin{tikzpicture}[very thick,scale = 1.2]
  \path[draw=red,line cap=butt,line join=miter,line width=0.010cm]
    (7.2231,-4.4067) -- (9.5914,-4.4095);
  \draw[very thick]
    (7.4842,-4.4080) .. controls (7.7872, -4.4198) and (8.0497, -3.7507) ..
    (8.4827, -3.7397) .. controls (8.9157, -3.7286) and (8.9734, -4.0064) ..
    (9.2965, -4.0042);
  \draw[very thick]
    (7.4789,-4.4088) .. controls (7.7820, -4.3970) and (8.0515, -5.0586) ..
    (8.4845, -5.0697) .. controls (8.9175, -5.0807) and (9.0360, -4.8059) ..
    (9.2759, -4.8038);
  \draw[very thick]
    (8.5467,-4.4049) .. controls (8.8794, -4.4044) and (8.8940, -3.9974) ..
    (9.2945, -4.0035);
  \draw[very thick]
    (8.5481,-4.4055) .. controls (8.8808, -4.4060) and (8.8821, -4.8112) ..
    (9.2826, -4.8051);
\end{tikzpicture}
\end{minipage}
\begin{minipage}[c]{4cm}
\begin{tikzpicture}[very thick,scale=1.2]
  \draw[very thick]
    (4.9433,-1.5746) .. controls (5.3414, -1.5720) and (5.2489, -2.2292) ..
    (5.7444, -2.2332) .. controls (6.1539, -2.2364) and (6.3962, -0.9288) ..
    (6.8018, -0.9218) .. controls (7.1993, -0.9150) and (7.2153, -1.5752) ..
    (7.5838, -1.5786);
  \path[draw=white,line cap=butt,line join=miter,line width=0.150cm]
    (6.5362,-1.5812) -- (6.0054,-1.5810);
  \path[draw=red,line cap=butt,line join=miter,line width=0.010cm]
    (4.7148,-1.5792) -- (7.7956,-1.5752);
  \path[draw=white,line cap=butt,line join=miter,line width=0.10cm]
    (6.4113,-1.8412) -- (6.1383,-1.3059);
  \draw[very thick]
    (4.9433,-1.5804) .. controls (5.3414, -1.5829) and (5.2489, -0.9257) ..
    (5.7444, -0.9218) .. controls (6.1539, -0.9185) and (6.3962, -2.2262) ..
    (6.8018, -2.2331) .. controls (7.1993, -2.2400) and (7.2153, -1.5797) ..
    (7.5838, -1.5763);
\end{tikzpicture}
\end{minipage} \caption{Examples of transvergent front diagrams}
\label{f:unknots}
\end{figure}

Recall that every strongly invertible link admits a transvergent diagram (see Section~\ref{s:existence}).  
Therefore, it is natural to expect is that each strongly invertible link is equivalent to the strongly invertible Legendrian link associated with a transvergent front diagram. 
Proposition~\ref{p:legrep} below shows that this indeed holds. 

Front diagrams of equivalent Legendrian links are connected by a sequence of Legendrian Reidemeister moves ($LR$-moves, for short)~\cite{Sw92}. 
Similarly, due to a theorem of Lobb and Watson~\cite{LW21}, transvergent diagrams of strongly invertible links (and, more generally, strongly involutive links) are connected by a sequence of equivariant moves that we call {\em Lobb-Watson moves}~\cite[Theorem~2.3]{LW21}. 
The following is an analog for strongly invertible Legendrian links. 

\begin{thm}\label{t:SILRT}
Let $\F_0$ and $\F_1$ be transvergent front diagrams of strongly invertible Legendrian links $\L_0$ and, respectively, $\L_1$.
Then, $\L_0$ and $\L_1$ are equivalent if and only if $\F_0$ and $\F_1$ are connected by a finite sequence of the following: 
\begin{itemize}
\item 
equivariant planar isotopies; 
\item 
pairs of $LR$-moves applied symmetrically with respect to the $x$-axis;
\item 
moves from the list of Figure~\ref{f:moves};
\item 
moves obtained from those of Figure~\ref{f:moves} by a $\pi$-rotation of the page. 
\end{itemize}
\begin{figure}[ht]
\centering
\begin{tikzpicture}[very thick, scale = 0.8]
\draw (-1,-1).. controls +(2,0) and +(-1,0) .. (1,1);
\draw[white, line width = 4] (-1,1).. controls +(2,0) and +(-1,0) .. (1,-1);
\draw[red, very thin] (-1,0) -- (1,0);
\draw[white, line width = 2] (-1,1).. controls +(2,0) and +(-1,0) .. (1,-1);
\draw (-1,1).. controls +(2,0) and +(-1,0) .. (1,-1);
\draw (-1,.75) .. controls +(.25,0) and +(-.5,0) .. (0,0);
\draw (-1,-.75) .. controls +(.25,0) and +(-.5,0) .. (0,0);

\node at (2,0.5) {CX};
\draw[latex-latex, thick] (1.25,0) -- (2.75,0);

\begin{scope}[shift ={+(4,0)}]
\draw (-1,-1).. controls +(1,0) and +(-2,0) .. (1,1);
\draw[white, line width = 4] (-1,1).. controls +(1,0) and +(-2,0) .. (1,-1);
\draw[white, line width = 4] (-1,.75) .. controls +(.5,0) and +(-.5,0) .. (.5,0);
\draw[white, line width = 4] (-1,-.75) .. controls +(.5,0) and +(-.5,0) .. (.5,0);
\draw[red, very thin] (-1,0) -- (1,0);
\draw (-1,.75) .. controls +(.5,0) and +(-.5,0) .. (.5,0);
\draw (-1,-.75) .. controls +(.5,0) and +(-.5,0) .. (.5,0);
\draw[white, line width = 2] (-1,1).. controls +(1,0) and +(-2,0) .. (1,-1);
\draw (-1,1).. controls +(1,0) and +(-2,0) .. (1,-1);
\end{scope}

\begin{scope}[shift ={+(8,0)}]
\draw (-1,-1).. controls +(2.25,0) and +(-1.25,0) .. (1,1);
\draw[white, line width = 4] (-1,-.75).. controls +(.75,0) and +(-2,0) .. (1,.75);
\draw (-1,-.75).. controls +(.75,0) and +(-2,0) .. (1,.75);
\draw[white, line width = 4] (-1,.75).. controls +(.75,0) and +(-2,0) .. (1,-.75);
\draw[red, very thin] (-1,0) -- (1,0);
\draw[white, line width = 2] (-1,.75).. controls +(.75,0) and +(-2,0) .. (1,-.75);
\draw (-1,.75).. controls +(.75,0) and +(-2,0) .. (1,-.75);
\draw[white, line width = 4] (-1,1).. controls +(2.25,0) and +(-1.25,0) .. (1,-1);
\draw (-1,1).. controls +(2.25,0) and +(-1.25,0) .. (1,-1);

\node at (2,0.5) {XX};
\draw[latex-latex, thick] (1.25,0) -- (2.75,0);

\begin{scope}[shift ={+(4,0)}]
\draw (-1,-1).. controls +(1.25,0) and +(-2.25,0) .. (1,1);
\draw[white, line width = 4] (-1,-.75).. controls +(2,0) and +(-.75,0) .. (1,.75);
\draw (-1,-.75).. controls +(2,0) and +(-.75,0) .. (1,.75);
\draw[white, line width = 4] (-1,.75).. controls +(2,0) and +(-.75,0) .. (1,-.75);
\draw[red, very thin] (-1,0) -- (1,0);
\draw[white, line width = 2] (-1,.75).. controls +(2,0) and +(-.75,0) .. (1,-.75);
\draw (-1,.75).. controls +(2,0) and +(-.75,0) .. (1,-.75);
\draw[white, line width = 4] (-1,1).. controls +(1.25,0) and +(-2.25,0) .. (1,-1);
\draw (-1,1).. controls +(1.25,0) and +(-2.25,0) .. (1,-1);
\end{scope}
\end{scope}

\begin{scope}[shift ={+(0,-3)}, yscale = .8]
\draw (-1,-1.5).. controls +(.75,0) and +(-0.25,-.15) .. (.35,-.35);
\draw (-1,-1).. controls +(.75,0) and +(-0.25,-.15) .. (.35,-.35);
\draw[red, very thin] (-1,0) -- (1,0);
\draw (-1,1.5).. controls +(.75,0) and +(-0.25,.15) .. (.35,.35);
\draw (-1,1).. controls +(.75,0) and +(-0.25,.15) .. (.35,.35);
\node at (2,0.5) {CC};
\draw[latex-latex, thick] (1.25,0) -- (2.75,0);

\begin{scope}[shift ={+(4,0)}, yscale = .8]
\draw (-1,-1.5).. controls +(1,0) and +(-0.35,-.15) .. (.75,.75);
\draw (-1,-1).. controls +(.25,0) and +(-0.35,-.15) .. (.75,.75);
\draw[white, line width = 4] (-1,1.5).. controls +(1,0) and +(-0.35,.15) .. (.75,-.75);
\draw[white, line width = 4] (-1,1).. controls +(.25,0) and +(-0.35,.15) .. (.75,-.75);
\draw[red, very thin] (-1,0) -- (1,0);
\draw[white, line width = 2] (-1,1.5).. controls +(1,0) and +(-0.35,.15) .. (.75,-.75);
\draw[white, line width = 2] (-1,1).. controls +(.25,0) and +(-0.35,.15) .. (.75,-.75);
\draw (-1,1.5).. controls +(1,0) and +(-0.35,.15) .. (.75,-.75);
\draw (-1,1).. controls +(.25,0) and +(-0.35,.15) .. (.75,-.75);
\end{scope}

\begin{scope}[shift ={+(7,0)}, xscale = .75]
\draw[very thick] (-.25,-1) .. controls +(.75,.5) and +(-.3,0) .. (3,1);
\draw[white , line width = 3] (0,0) -- (3,0);
\draw[very thin, red] (0,0) -- (3,0);
\draw[white , line width = 4.5] (-.25,1) .. controls +(.75,-.5) and +(-.3,0) .. (2.5,0);
\draw[very thick] (-.25,-1) .. controls +(.75,.5) and +(-.3,0) .. (2.5,0);
\draw[white , line width = 4.5] (-.25,1) .. controls +(.75,-.5) and +(-.3,0) .. (3,-1);
\draw[very thick] (-.25,1) .. controls +(.75,-.5) and +(-.3,0) .. (3,-1);
\draw[very thick] (-.25,1) .. controls +(.75,-.5) and +(-.3,0) .. (2.5,0);
\node at (4.15,0.5) {CR};
\draw[latex-latex, thick] (3.25,0) -- (5.15,0);
\end{scope}

\begin{scope}[shift ={+(11,0)}, xscale = .75]
\draw[very thin, red] (0,0) -- (3,0);
\draw[very thick] (3,1) .. controls +(-.5,0) and +(.5,0) .. (1,0);
\draw[very thick] (3,-1) .. controls +(-.5,0) and +(.5,0) .. (1,0);
\end{scope}
\end{scope}
\end{tikzpicture}
 \caption{Equivariant Legendrian Reidemeister moves.}
\label{f:moves}
\end{figure}
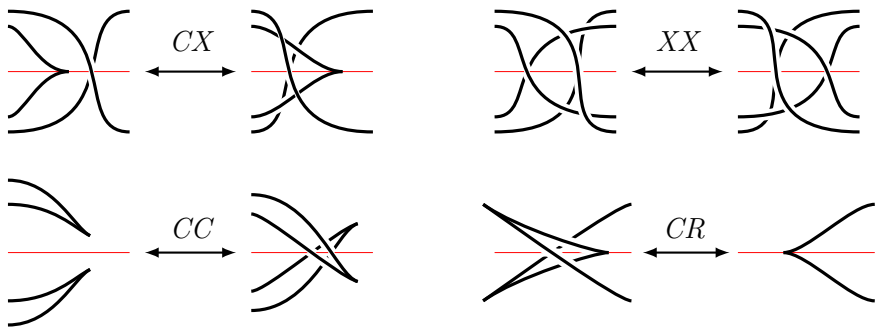
\end{thm}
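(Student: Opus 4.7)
The "only if" direction is immediate, since each listed move visibly preserves equivalence of strongly invertible Legendrian links; I focus on the converse.

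My plan is to adapt to the equivariant Legendrian setting the genericity-stratification strategy underlying both the Legendrian Reidemeister theorem~\cite{Sw92} and the equivariant Reidemeister theorem of Lobb and Watson~\cite{LW21}. Let $\{\L_t\}_{t\in[0,1]}$ be a smooth family of strongly invertible Legendrian links realizing the given equivalence, and set $\F_t:=\pi(\L_t)$. I stratify the space of equivariant Legendrian fronts according to the singularity type of the underlying transvergent diagram, and apply an equivariant transversality argument. After a $C^\infty$-small equivariant Legendrian perturbation of the family, I expect $\F_t$ to be a transvergent front diagram for every $t\in[0,1]$ outside a finite set $\{t_1<\cdots<t_n\}\subset(0,1)$, with exactly one codimension-one singular event occurring at each $t_i$. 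Between consecutive singular times the family varies by an equivariant planar isotopy, accounting for the first kind of move in the statement.

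The codimension-one events split into two classes according to their position relative to the symmetry axis $\{z=0\}$. An \emph{off-axis event} is a generic Legendrian front singularity whose local model is supported in a disk $D\subset\R^2$ with $D\cap\tau(D)=\emptyset$; equivariance then forces the same event to occur simultaneously in $\tau(D)$, producing a pair of $LR$-moves applied symmetrically about the $x$-axis. An \emph{on-axis event} is one whose local model meets the axis. The bulk of the proof consists in enumerating these by listing the possible jet-models at a point on $\{z=0\}$, subject to the constraint that the front remains transvergent across the singular time. The natural combinatorial possibilities — two cusps colliding on the axis, two crossings merging on the axis, a cusp exchanging with a crossing across the axis, and a cusp pair appearing or disappearing at the axis — should match the four moves in Figure~\ref{f:moves}, while flipping the local picture upside down accounts for their $\pi$-rotated counterparts.

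The main obstacle I foresee is the completeness of the on-axis classification: one must show that no exotic equivariant singularity has been overlooked, especially in configurations involving simultaneous tangencies with, or crossings on, the axis. The standard remedy is a codimension count working modulo the symmetry generated by $\tau$, showing that only those degenerations whose equivariant local codimension equals one are generically encountered, and then verifying by direct inspection of local models that Figure~\ref{f:moves} together with its $\pi$-rotations exhausts them.
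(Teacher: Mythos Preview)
Your high-level strategy---perturb the family to genericity and enumerate codimension-one events, split into off-axis symmetric pairs and on-axis moves---is the same as the paper's. The substantive difference is in how the on-axis classification is carried out.

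The paper does \emph{not} work directly with equivariant transversality for $\L_t$. Instead it decomposes $\L_t = \A_t \cup \tau(\A_t)$ into a pair of Legendrian arc-families with endpoints on the $x$-axis, and perturbs only $\A_t$ (relative to the axis) using the ordinary, non-equivariant front genericity of~\cite{Sw92}. This reduces the on-axis classification to asking how the front of a Legendrian \emph{arc with boundary on the axis} can degenerate: either a standard $LR$-singularity in the interior, a strand passing through an endpoint, a crossing or cusp reaching the axis, or a cusp being absorbed into an endpoint. Doubling each of these via $\tau$ yields, respectively, the symmetric $LR$-pair, $CX$, $XX$, $CC$, and $CR$. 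The half-link trick buys a clean completeness argument by reducing to known non-equivariant results, and it also immediately handles the auxiliary genericity (e.g.\ that $\tau$-images of crossings/cusps of $\pi(\A_t)$ are not themselves crossings/cusps).

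Your direct equivariant-stratification route is viable in principle, but your informal list of on-axis events does not quite match the moves: for instance, $CR$ is not ``a cusp pair appearing or disappearing at the axis'' but rather a single on-axis cusp absorbing or emitting an on-axis crossing together with a second on-axis cusp (the Legendrian analogue of an equivariant $R1$). Getting this right, and proving exhaustiveness, is exactly where the paper's arc decomposition pays off; without it you would need to carry out the equivariant jet analysis you allude to, which you have not actually done.
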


Recall that Legendrian links can be {\em stabilized} (see~\cite[\S 4.2]{FT97} and~\cite[\S 2.7]{Et05}). 
It is a well-known result of Fuchs and Tabacnhikov that after sufficiently many stabilizations, two (oriented) Legendrian knots in $(\R^3,\xi_{\rm std})$ with the same topological type become Legendrian isotopic~\cite[Theorem~4.4]{FT97}. 
In Section~\ref{s:stab} we show that an {\em unoriented} strongly invertible Legendrian link $\L$ with the choice of a point $p = \tau(p)\in \L$ admits two different types of stabilizations. In particular, a strongly invertible Legendrian front $\K$ with a distinguished fixed point $p\in\K$ admits two types of stabilizations that we call $S(\K,p)$ and $T(\K,p)$. 
For example, Figure~\ref{f:unknots} contains the diagrams of $\U_S := S(\U,c_R)$ (on the left) and $\U_T := T(\U,c_R)$ (on the right), where $\U$ is the strongly invertible Legendrian unknot given by the diagram of Figure~\ref{f:unknot} and $c_R$ is the fixed point corresponding to the right cusp of the diagram.
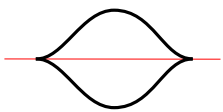
\begin{figure}[ht]
\centering
\begin{tikzpicture}[very thick,scale = 1.2]
\path[draw=red,line cap=butt,line join=miter,line width=0.010cm]
    (7.2281,-4.6268) -- (9.5964,-4.6296);
\draw[very thick]
    (7.5721,-4.6268) .. controls (7.8751, -4.6386) and (8.0768, -4.0868) ..
    (8.4318, -4.0883) .. controls (8.8649, -4.0903) and (8.9688, -4.6290) ..
    (9.2919, -4.6268);
\draw[very thick]
    (7.5721,-4.6268) .. controls (7.8751, -4.6150) and (8.0768, -5.1668) ..
    (8.4318, -5.1653) .. controls (8.8649, -5.1634) and (8.9688, -4.6246) ..
    (9.2919, -4.6268);
\end{tikzpicture} \caption{Transvergent front diagram of $\U$}
\label{f:unknot}
\end{figure}~\\
The Thurston-Bennequin number is an integer-valued invariant of Legenrdian knots which can be computed directly from a front \cite{Et05}. 
Observe that, since the Legendrian knots in Figure~\ref{f:unknots} satisfy $\tb(\U_S) = \tb(\U_T) = -2$, by the classification of the Legendrian unknots~\cite{EF09}, it follows that $\U_S$ and $\U_T$ are Legendrian isotopic as unoriented Legendrian knots. Moreover, when the two diagrams in Figure~\ref{f:unknots} are regarded as smooth diagrams, they represent equivalent strongly invertible knots, cf.~\cite{LW21}. 
On the other hand, we show in Section~\ref{s:stab} -- see Corollary~\ref{c:notequ} -- that the unoriented strongly invertible Legendrian unknots $\U_S$ and $\U_T$ are {\em not} equivalent. 

In Section~\ref{s:stabequiv} we apply Theorem~\ref{t:SILRT} to obtain the following result, which is an analog of the Fuchs and Tabachnikov's result in the present setting. 

\begin{thm}\label{t:stabs}
Let $\L$ and $\L'$ be strongly invertible Legendrian links which are equivalent as strongly invertible links.  
Then, after sufficiently many $S$-stabilizations, $\L$ and $\L'$ become equivalent strongly invertible Legendrian links. 
\end{thm}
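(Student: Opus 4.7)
The plan is diagrammatic, combining Theorem~\ref{t:SILRT} with the Lobb--Watson theorem. By Proposition~\ref{p:legrep}, we may represent $\L$ and $\L'$ by transvergent front diagrams $\F$ and $\F'$. Viewed as smooth transvergent diagrams, $\F$ and $\F'$ represent equivalent strongly invertible links, so by the Lobb--Watson theorem they are related by a finite sequence
$$\F = \F_0 \to \F_1 \to \cdots \to \F_n = \F',$$
whose steps are equivariant planar isotopies and Lobb--Watson moves, the intermediate $\F_i$ being generically non-Legendrian transvergent diagrams.

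The core claim is the following key lemma: for each Lobb--Watson move applied to a transvergent front, a finite number of $S$-stabilizations of the corresponding strongly invertible Legendrian link suffices to allow that move to be realized by a sequence of the equivalences listed in Theorem~\ref{t:SILRT}. Granting this, the theorem follows by induction on $n$: at each step we first $S$-stabilize enough to realize the next Lobb--Watson move Legendrianly, and then perform it. Earlier $S$-stabilizations persist without obstructing later steps, so the total number of stabilizations needed is finite.

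To prove the key lemma, we split the Lobb--Watson moves into two families. Moves supported away from the fixed axis of $\tau$ consist of a pair of disjoint smooth Reidemeister moves symmetric about the axis; applying the classical Fuchs--Tabachnikov theorem on one side of the axis and mirroring the resulting sequence across it produces a chain of symmetric pairs of $LR$-moves as in the second bullet of Theorem~\ref{t:SILRT}. Moves that interact with the fixed axis are handled by explicit local models: for each such move, we exhibit an $S$-stabilized transvergent front on which the move can be replicated using the equivariant Legendrian Reidemeister moves of Figure~\ref{f:moves} and their $\pi$-rotations.

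The main obstacle will be the case-by-case verification for the axis-interacting Lobb--Watson moves. It is here that the choice of $S$- rather than $T$-stabilization plays a decisive role: the $S$-stabilization introduces symmetric zig-zags adjacent to a fixed point that provide the Legendrian front with the ``slack'' needed to absorb arbitrary local smooth equivariant perturbations, in direct analogy with the role played by ordinary stabilization in the Fuchs--Tabachnikov proof. The $T$-stabilization, by contrast, alters the axis-crossing structure and does not furnish this flexibility, consistently with the non-equivalence between $\U_S$ and $\U_T$ recorded in Corollary~\ref{c:notequ}.
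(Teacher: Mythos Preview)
Your strategy is essentially the paper's, and it is the right one, but two technical points you gloss over are not automatic and constitute genuine gaps.

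First, your treatment of off-axis moves assumes you can ``apply the classical Fuchs--Tabachnikov theorem on one side and mirror.'' But Fuchs--Tabachnikov requires zig-zags at specific locations along the front, whereas an $S$-stabilization produces a symmetric pair of zig-zags adjacent to a cusp on the fixed axis. To use these elsewhere you must relocate them, and in general the arc from the stabilization point to the target location passes through other fixed points, so the symmetric pair of zig-zags must be pushed \emph{across} the axis. This is a separate lemma that the paper isolates and proves: two transvergent fronts differing only by sliding a symmetric pair of zig-zags through an axis crossing are equivalent, via symmetric $R2$-moves and two $CC$-moves. Without this you cannot place the slack where the Fuchs--Tabachnikov argument needs it, and your induction stalls.

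Second, you list ``equivariant planar isotopies'' among the Lobb--Watson steps but say nothing about how they are realized Legendrianly. A smooth planar isotopy can carry a vertical tangent through a crossing or through an inflection point; after Legendrianization these are genuine changes to the front requiring stabilizations (the $V1$, $V2$ cases of~\cite[Figures~21--22]{FT97}). Off the axis one symmetrizes those pictures; on the axis one must also argue that such events cannot occur there, which the paper does by observing that equivariance would force both tangent lines at the point to be vertical, contradicting transversality of the crossing. Your proposal does not address any of this.

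Finally, the case-by-case verification for the axis-interacting moves $R1$, $R2$, $M1$, $M2$, $M3$ that you defer as ``the main obstacle'' is indeed the bulk of the proof and is carried out in the paper by explicit front sequences; your closing remark on why $S$- rather than $T$-stabilization is the right choice is heuristically reasonable but does not substitute for those verifications.
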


\begin{rem}\label{r:unoriented}
To keep things simple in this paper we mainly consider strongly invertible {\em unoriented} Legendrian links.
As in the non-equivariant case, one could introduce \emph{oriented} strongly invertible Legendrian links and the corresponding equivalence between them. 
The natural analogs of Theorems~\ref{t:SILRT} and~\ref{t:stabs} would still hold. 
\end{rem}

It is natural to wonder whether the maximal Thurston-Bennequin number of a strongly invertible knot is always realized by a strongly invertible Legendrian knot. This leads to the following 

\begin{defn}\label{d:maxeqtb}
Let $K\subset\R^3$ be a strongly invertible knot. 
We define the {\em maximal equivariant Thurston-Bennequin number} of $K$, denoted $\overline\tb_e(K)$, as the maximal Thurston-Bennequin number of a strongly invertible Legendrian knot representing $K$. 
\end{defn}

Note that the maximal equivariant Thurston-Bennequin number can be refined to yield an invariant of strongly invertible knots with a fixed involution, by considering the maximal Thurston-Bennequin number achieved within the appropriate equivalence class of strongly invertible knots. 
However, in this paper we do not consider such a refinement.

Denote by~$\overline{\tb}(K)$ the maximal Thurston-Bennequin number among all Legendrian representatives of $K$. 
For any knot $K$ the inequality $\overline{\tb}_e(K) \leq \overline{\tb}(K)$ clearly holds, so it is natural to ask whether it is possibile to have a strict inequality. 

In Section~\ref{s:maxeqtb} we prove the following 

\begin{prop}\label{p:torus-twist}
If $K$ is either a torus knot of type $T(2,2n+1)$, $n\in\Z$, or a twist knot. 
Then, we have $\overline{\tb}_e(K) = \overline{\tb}(K)$.
\end{prop}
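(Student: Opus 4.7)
The plan is to construct, for each knot $K$ in the two families, an explicit transvergent front diagram whose associated strongly invertible Legendrian knot realizes $\tb = \overline{\tb}(K)$. Since by Proposition~\ref{p:legrep} every transvergent front yields a strongly invertible Legendrian knot, and since the opposite inequality $\overline{\tb}_e(K) \leq \overline{\tb}(K)$ is immediate from the definition, this will suffice—provided one also verifies that the strong inversion induced on $K$ by the symmetric diagram agrees with the given one. The latter follows from the classical uniqueness (up to equivalence) of the strong involution on torus knots and twist knots.

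For a positive torus knot $T(2,2n+1)$, $n \geq 1$, the Etnyre--Honda classification gives $\overline{\tb} = 2n-1$, realized by the standard two-strand Legendrian front with a left cusp, a right cusp, and $2n+1$ interior crossings. The plan is to draw this front symmetrically about the $x$-axis, placing both cusps on the axis and arranging the two strands as reflections of each other, meeting only at the crossings (which lie on the axis). For the unknot case ($n \in \{0,-1\}$), the diagram of Figure~\ref{f:unknot} is already a transvergent maximizer. For the left-handed torus knots $T(2,-(2k+1))$ with $k \geq 1$, the maximizer is obtained from a suitable reflected pattern by appending zig-zags to the two strands, and these can be placed symmetrically across the $x$-axis so that the final front is transvergent without affecting $\tb$.

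For a twist knot, the standard maximum-$\tb$ Legendrian front decomposes into a clasp region and a sequence of half-twists. Both portions can be drawn invariantly under $(x, z) \mapsto (x,-z)$: the clasp by placing its cusps on the $x$-axis, and the twist region by centering its crossings on that axis. The arcs connecting the two regions meet the $x$-axis exactly at the cusps, which play the role of the two fixed points of $\tau$; so the resulting diagram is transvergent and represents the desired smooth knot type. A case distinction on the sign of the twist parameter is needed (to match the Epstein--Fuchs--Meyer-type value of $\overline{\tb}$), but in each case the construction is of the same shape.

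The main obstacle is the bookkeeping: verifying in each case that the symmetrization of the standard front preserves both the Thurston--Bennequin number and the smooth isotopy class, and that the different sign conventions for twist knots can be handled uniformly. A subtler point is that the passage from the standard maximizing front to a transvergent one must avoid introducing spurious cusps or crossings that would decrease $\tb$; in the left-handed torus knot case this forces the zig-zags to be added in pairs symmetric across the $x$-axis, and in the twist knot case it constrains where along the strands the clasp and the twists may be positioned.
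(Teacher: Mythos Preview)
Your approach is the same as the paper's: quote the known values of $\overline{\tb}$ from \cite{EH01} and \cite{ENV13}, then exhibit explicit transvergent front diagrams realizing them. Two remarks are in order.

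First, the step where you ``verify that the strong inversion induced on $K$ by the symmetric diagram agrees with the given one'' is unnecessary here. By Definition~\ref{d:maxeqtb} and the sentence immediately following it, $\overline{\tb}_e(K)$ is defined as a maximum over \emph{all} strongly invertible Legendrian knots of topological type $K$; the paper explicitly declines to fix an equivariant class. So any transvergent front of the correct smooth type suffices, and no appeal to uniqueness of the involution is needed.

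Second, your treatment of the negative torus knots is too vague to stand as written, and the phrase ``appending zig-zags \ldots\ without affecting $\tb$'' is confusing: in standard usage a zig-zag is a stabilization, and stabilizations always lower $\tb$. The paper's construction for $n<0$ is not a stabilized version of anything; it is a direct transvergent front with two axial cusps and a column of $2|n|+1$ crossings on the axis (Figure~\ref{f:SILtorusK}, left). Similarly, for twist knots the paper simply draws the three explicit transvergent fronts of Figure~\ref{f:SILtwistK} and checks their $\tb$ against the list. Your plan would become a proof once you actually produce such diagrams and compute their Thurston--Bennequin numbers; the narrative about symmetrizing a known maximizer is not by itself enough, since you must check that the symmetrization introduces no extra cusps.
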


On the other hand, in Section~\ref{s:expconj} we present some experimental results which suggest that the equality $\overline{\tb}_e = \overline{\tb}$ should not hold in general. 

\begin{rem}
One may define strong invertibility with respect to the involution $\si\co\R^3\to\R^3$ given by $\si(x,y,z) = (-x,-y,z)$. 
Note that both the involutions $\tau$ and $\si$ preserve $\xi_{\rm st}$, but $\tau$ reverses the orientation of the contact planes, while $\si$ preserves it. 
Moreover, while the fixed-point set $F_\tau$ is tangent to $\xi_{\rm std}$, the fixed-point set $F_\si$ -- which coincides with the $z$-axis -- is {\em transverse} to $\xi_{\rm std}$. 
We do not investigate strong $\si$-invertibility in this paper. 
\end{rem}
The paper is organized as follows. In Section~\ref{s:existence} we establish~Proposition~\ref{p:legrep} and in Section~\ref{s:LeRt} we prove Theorem~\ref{t:SILRT}. 
In Section~\ref{s:stab} we define the stabilizations of strongly invertible Legendrian links and show that the strongly invertible Legendrian unknots of Figure~\ref{f:unknots} are not equivalent. 
In Section~\ref{s:stabequiv} we prove Theorem~\ref{t:stabs}. 
In Section~\ref{s:maxeqtb} we show that the equivariant maximal Thurston-Bennequin number coincides with the maximal Thurston-Bennequin number for certain families of knots. In Section~\ref{s:expconj} we describe some experimental results and state three conjectures.  

\subsection*{Acknowledgements} 
The authors are grateful to Lenhard Ng for helpful e-mail messages. 

\section{Existence of strongly invertible Legendrian representatives}\label{s:existence}

In this section, we prove Proposition~\ref{p:legrep}. The following well-known proposition (cf.~\cite{LW21}) shows that, given a strongly invertible link $L\subset\R^3$, up to equivalence one can always assume that the projection $\pi(L)$ on the $xz$-plane is generic and therefore yields a transvergent diagram of $L$. 
We include a proof for the sake of completeness and because in Section~\ref{s:LeRt}, we use a similar approach. 

\begin{prop}\label{p:intr-diagrs}
Each strongly invertible link $L$ is equivalent to a strongly invertible link $\tilde L$ with a transvergent diagram $D$. 
There exists $D$ such that each crossing involves two strands meeting at a $\pi/2$-angle, $D$ intersects transversely the $x$-axis, and the restriction to $D$ of the coordinate-$x$ function has a finite number of critical points. 
Moreover, each non-crossing point of the intersection of $D$ with the $x$-axis has a vertical tangent line. 
\end{prop}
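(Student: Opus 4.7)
The plan is to start with an arbitrary strongly invertible link $L\subset\R^3$ and perform a sequence of small $\tau$-equivariant perturbations, each achieving one of the required properties without destroying the previous ones. Since the projection $\pi$ intertwines $\tau$ with the reflection $r\co(x,z)\mapsto(x,-z)$, any $\tau$-equivariant link automatically projects to an $r$-invariant subset, so transvergence of the diagram is automatic once the projection is generic; the work is to arrange genericity and the angle condition.

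The first step is to analyze the local behaviour at a fixed point $p\in L\cap\Fix(\tau)$. The differential $d\tau_p$ preserves $T_pL$ with eigenvalue $\pm 1$; since $L$ meets the fixed axis only at the two fixed points of its component, the eigenvalue must be $-1$, so $T_pL$ lies in the $yz$-plane. We may parameterize $L$ near $p=(x_0,0,0)$ as $t\mapsto(x_0+F(t),t,H(t))$, with $F$ even and $H$ odd by $\tau$-equivariance. The projection $(x_0+F(t),H(t))$ then has tangent $(0,H'(0))$ at $t=0$, which is vertical provided $H'(0)\neq 0$. A small compactly supported equivariant perturbation near $p$ achieves $H'(0)\neq 0$, and applying this at each fixed point makes all of them project to points with vertical tangent.

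The second step is to arrange global genericity. For each component $L_i$, choose a fundamental domain $a_i\subset L_i$, i.e.~a closed arc with endpoints at the two fixed points of $L_i$ such that $L_i=a_i\cup\tau(a_i)$. Applying standard multi-jet transversality to the disjoint union of arcs $a_i$, while keeping fixed a small neighborhood of their endpoints where the previous step has already been arranged, yields an arbitrarily small perturbation after which $\pi$ restricted to $\bigsqcup_i a_i$ is an immersion with only finitely many transverse double points, the images $\pi(a_i)$ and $\pi(\tau(a_j))$ meet only transversely for all $i,j$, each $a_i$ meets the $x$-axis only at its endpoints and transversely there, and $x|_{a_i}$ has only non-degenerate, hence finitely many, critical points. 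The $\tau$-equivariant extension $L_i=a_i\cup\tau(a_i)$ then inherits all these properties, producing an $r$-symmetric generic diagram $D$.

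Finally, the crossings must be straightened to $\pi/2$ angles. Crossings of $D$ off the $x$-axis come in $\tau$-related pairs, and a local diffeomorphism symmetric across the $x$-axis straightens both simultaneously. A crossing on the $x$-axis comes from two strands swapped by $\tau$; straightening one strand to meet the $x$-axis at a $\pi/4$ angle automatically yields a $\pi/2$ crossing with its $\tau$-image. The main obstacle is ensuring that successive perturbations do not interfere with each other, which is handled by performing the steps in the stated order and choosing each local perturbation to be supported disjointly from the regions where earlier properties were achieved, in particular away from $\Fix(\tau)$ once the vertical-tangent condition has been set.
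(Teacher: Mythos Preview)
Your proof is correct and follows essentially the same strategy as the paper: decompose $L$ into a fundamental domain $A=\bigcup_i a_i$ and its $\tau$-image, then perturb $A$ away from its endpoints to achieve genericity of the projection. The paper's proof is considerably terser---it simply invokes ``general position'' for all the stated properties---whereas you supply the details, in particular the eigenvalue argument showing $T_pL$ lies in the $yz$-plane at each fixed point and the explicit perturbation ensuring the projection has a vertical (and immersive) tangent there; this step is not spelled out in the paper but is needed since the paper only perturbs \emph{away} from the endpoints.
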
 

\begin{proof}
The link $L$ can be decomposed as 
\[
L = A \cup \tau(A), 
\]
where $A\subset\R^3$ is a finite union of smoothly embedded arcs such that $A\cap\tau(A)$ consists of the fixed points of $\tau|_L$. 
By perturbing slightly $A$ away from its endpoints, we obtain a strongly invertible link 
\[
\tilde L = \tilde A \cup \tau(\tilde A)
\]
which is $\tau$-equivariantly isotopic to $L$, and a corresponding strongly invertible link $\tilde L$. 
Now, $\pi(\tilde L)$ is generic and thus yields a transvergent diagram for $L$. General position ensures the stated properties of $D$. 
\end{proof}

We call ``bad'' a diagram crossing whose over-strand has a higher slope than the under-strand. 

\begin{prop}\label{p:legrep}
Each strongly invertible link $L\subset\R^3$ is equivalent to the strongly invertible Legendrian link associated with a transvergent front diagram $\F$. More precisely, given a transvergent diagram $D$ for $L$, $\F$ is obtained by $\tau$-equivariantly replacing each point of $D$ having a vertical tangent with a cusp and each ``bad'' crossing with either one of the configurations illustrated in Figure~\ref{f:crossings}. 
\begin{figure}[ht]
\begin{tikzpicture}[very thick, scale = 0.8]
\draw (-1,1) -- (1,-1);
\draw[line width = 8, white] (1,1) -- (-1,-1);
\draw[red, thin] (-1.4,0) -- (1.4,0);
\draw[line width = 3, white] (1,1) -- (-1,-1);
\draw (-1,-1) -- (1,1);

\draw[-latex, line width = 1] (2,0) -- (3,0);
\node at (7,0) {or};
\begin{scope}[shift = {(5,0)}]
\draw (-1,1) 
.. controls +(.5,-.5) and +(-.5,0) .. (.4,.75) 
.. controls +(-.5,0) and +(.5,0) .. (-.6,-.35) 
.. controls +(1,0) and +(-.5,.5) .. (1,-1);
\draw[line width = 5, white] (-1,-1) 
.. controls +(.5,.5) and +(-.5,0) .. (.4,-.75) 
.. controls +(-.5,0) and +(.5,0) .. (-.6,.35) 
.. controls +(1,0) and +(-.5,-.5) .. (1,1);
\draw[red, thin] (-1.4,0) -- (1.4,0);
\draw[line width = 3, white] (-1,-1) 
.. controls +(.5,.5) and +(-.5,0) .. (.4,-.75) 
.. controls +(-.5,0) and +(.5,0) .. (-.6,.35) 
.. controls +(1,0) and +(-.5,-.5) .. (1,1);
\draw (-1,-1) 
.. controls +(.5,.5) and +(-.5,0) .. (.4,-.75) 
.. controls +(-.5,0) and +(.5,0) .. (-.6,.35) 
.. controls +(1,0) and +(-.5,-.5) .. (1,1);
\end{scope}
\begin{scope}[shift = {(9,0)}, scale = -1]
\draw (-1,1) 
.. controls +(.5,-.5) and +(-.5,0) .. (.4,.75) 
.. controls +(-.5,0) and +(.5,0) .. (-.6,-.35) 
.. controls +(1,0) and +(-.5,.5) .. (1,-1);
\draw[line width = 5, white] (-1,-1) 
.. controls +(.5,.5) and +(-.5,0) .. (.4,-.75) 
.. controls +(-.5,0) and +(.5,0) .. (-.6,.35) 
.. controls +(1,0) and +(-.5,-.5) .. (1,1);
\draw[red, thin] (-1.4,0) -- (1.4,0);
\draw[line width = 3, white] (-1,-1) 
.. controls +(.5,.5) and +(-.5,0) .. (.4,-.75) 
.. controls +(-.5,0) and +(.5,0) .. (-.6,.35) 
.. controls +(1,0) and +(-.5,-.5) .. (1,1);
\draw (-1,-1) 
.. controls +(.5,.5) and +(-.5,0) .. (.4,-.75) 
.. controls +(-.5,0) and +(.5,0) .. (-.6,.35) 
.. controls +(1,0) and +(-.5,-.5) .. (1,1);
\end{scope}
\end{tikzpicture} 
~\\
\vspace{0.1cm}
~\\
\begin{tikzpicture}[very thick, scale = 0.8]
\draw (-1,1) -- (1,-1);
\draw[line width = 8, white] (1,1) -- (-1,-1);
\draw[line width = 3, white] (1,1) -- (-1,-1);
\draw (-1,-1) -- (1,1);

\draw[-latex, line width = 1] (2,0) -- (3,0);
\node at (7,0) {or};
\begin{scope}[shift = {(5,0)}]
\draw (-1,1) -- (1,-1);
\draw[line width = 5, white] (-1,-1) 
.. controls +(.5,.5) and +(-.5,0) .. (.5,-.75) 
.. controls +(-.5,0) and +(.5,0) .. (-.5,.75) 
.. controls +(1,0) and +(-.5,-.5) .. (1,1);
\draw (-1,-1) 
.. controls +(.5,.5) and +(-.5,0) .. (.5,-.75) 
.. controls +(-.5,0) and +(.5,0) .. (-.5,.75) 
.. controls +(1,0) and +(-.5,-.5) .. (1,1);
\end{scope}
\begin{scope}[shift = {(9,0)}, yscale =-1]
\draw (-1,-1) 
.. controls +(.5,.5) and +(-.5,0) .. (.5,-.75) 
.. controls +(-.5,0) and +(.5,0) .. (-.5,.75) 
.. controls +(1,0) and +(-.5,-.5) .. (1,1);
\draw[line width = 5, white] (-1,1) -- (1,-1);
\draw (-1,1) -- (1,-1);
\end{scope}
\end{tikzpicture} \caption{The two possible modifications near a ``bad" crossing to turn a transvergent diagram into a transvergent front diagram.}
\label{f:crossings}
\end{figure}
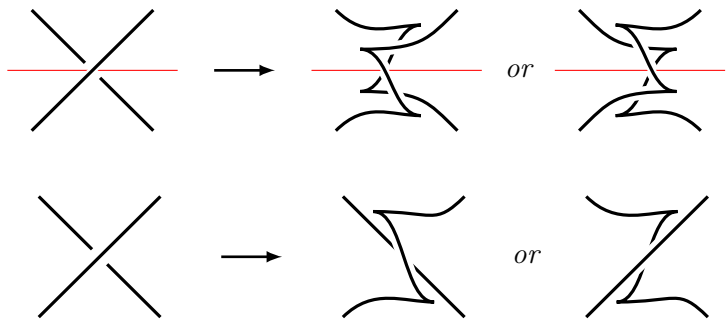
\end{prop}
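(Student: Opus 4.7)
The plan is to construct the transvergent front diagram $\F$ from a given transvergent diagram $D$ of $L$ (furnished by Proposition~\ref{p:intr-diagrs}) via local, $\tau$-equivariant modifications, and then to verify that the unique Legendrian lift $\L$ of $\F$ is a strongly invertible Legendrian link equivalent to $L$.

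First, I would classify the features of $D$ to be modified according to their relation to the $x$-axis. Let $\hat\tau\co(x,z)\mapsto(x,-z)$ denote the reflection induced by $\tau$ on the $xz$-plane. Away from the $x$-axis, the set of vertical tangents and bad crossings of $D$ is $\hat\tau$-invariant and consists of symmetric pairs $\{p,\hat\tau(p)\}$. On the $x$-axis, vertical tangents correspond precisely to fixed points of $\tau|_L$: at such a fixed point $p$, the tangent line $T_pL$ is an eigenspace of $d\tau_p=\operatorname{diag}(1,-1,-1)$, and the condition that $\pi(L)$ has vertical tangent forces $T_pL$ to lie in the $y$-direction. On-axis bad crossings instead involve two strands which (since $\tau$ swaps the sign of the $y$-coordinate) are exchanged by $\tau$ and meet with slopes $\pm 1$.

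Next, I would perform the modifications. For each off-axis pair $\{p,\hat\tau(p)\}$ of vertical tangents or bad crossings, I would apply the standard non-equivariant front modification at $p$ and its $\hat\tau$-image at $\hat\tau(p)$. For each on-axis vertical tangent, I would introduce a cusp placed symmetrically about the $x$-axis, which is automatically $\hat\tau$-invariant. For each on-axis bad crossing, I would use one of the two symmetric configurations from Figure~\ref{f:crossings}; either choice gives a manifestly $\hat\tau$-invariant replacement. The resulting diagram $\F$ is transvergent by construction and, being a front diagram, determines a unique Legendrian link~$\L$. One then verifies: (i) $\L$ is $\tau$-invariant, since $\tau$ preserves $\xi_{\rm st}$, so $\tau(\L)$ is a Legendrian link whose front equals $\hat\tau(\F)=\F$, whence $\tau(\L)=\L$ by uniqueness of Legendrian lifts; (ii) each component of $\L$ meets the $x$-axis in exactly two points, namely the two on-axis cusps of $\F$ coming from the two fixed points of $\tau|_L$ on the corresponding component of $L$ (recall that at a cusp of a front the corresponding Legendrian point satisfies $y=0$, so on-axis cusps land on the $x$-axis of $\R^3$); hence $\L$ is strongly invertible.

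The main technical obstacle is to realize each local modification by a $\tau$-equivariant ambient isotopy of $\R^3$, so that $L$ and $\L$ are equivalent as strongly invertible links. For off-axis pairs this is immediate: one performs the standard isotopy on one side of the fixed-point set and extends by $\tau$ to the other side. Near an on-axis cusp there is no difficulty, since the cusp-introducing isotopy can be written symmetrically in $\tau$-equivariant Darboux coordinates. The delicate case is an on-axis bad crossing, where over- and under-strand are swapped by $\tau$: here one must exhibit a smooth one-parameter family of strongly invertible links interpolating, within a $\tau$-equivariant neighborhood of the crossing, between the original local configuration and either of the two zigzags of Figure~\ref{f:crossings}. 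I would handle this by writing down explicit parametric local models, invariant under $(x,y,z)\mapsto(x,-y,-z)$ at every time $t\in[0,1]$, and checking that they remain embedded and strongly invertible throughout.
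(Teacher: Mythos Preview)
Your proposal is correct and follows the same overall strategy as the paper: start from a transvergent diagram $D$ as in Proposition~\ref{p:intr-diagrs}, replace vertical tangents by cusps and bad crossings by the local pictures of Figure~\ref{f:crossings} in a $\tau$-equivariant fashion, and observe that the resulting transvergent front lifts to a strongly invertible Legendrian link equivalent to $L$. Your extra verifications (that the Legendrian lift is $\tau$-invariant by uniqueness of lifts, and that each component meets the $x$-axis in exactly two points) are useful and more explicit than what the paper writes.

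The one place where your argument genuinely diverges from the paper is the on-axis bad crossing. You propose to exhibit, by hand, an explicit $\tau$-equivariant one-parameter family of embedded arcs interpolating between the original crossing and either zigzag configuration. This can certainly be done, but it is somewhat fiddly to write down cleanly. The paper instead observes that the replacement at an on-axis bad crossing is precisely an instance of the Lobb--Watson equivariant Reidemeister move of type~M2 (from~\cite[Figure~9]{LW21}), and then invokes the easy direction of~\cite[Theorem~2.3]{LW21} to conclude that the strongly invertible equivalence class is unchanged. This is tidier and avoids any explicit local model; on the other hand, your approach is more self-contained and does not rely on an external reference. Either route is valid.
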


\begin{proof}
Let $L\subset\R^3$ be a strongly invertible link and $D$ a transvergent diagram of $L$ as in Proposition~\ref{p:intr-diagrs}. 
Arguing as in the non-equivariant case (cf.~\cite[Section~2.3]{Et05}) we modify $D$ into the transvergent front diagram of a strongly invertible Legendrian link equivalent to $L$ as follows. 
We replace a neighborhood of each bad crossing on the $x$-axis with either one of the configurations at the top right of Figure~\ref{f:crossings}. 
We replace a neighborhood of each bad crossing off the $x$-axis with either one of the configurations at the bottom right of Figure~\ref{f:crossings} while replacing at the same time the $\tau$-image of the same neighborhood with the other configuration. 
We claim that the result of the above procedure is the transvergent front diagram of a Legendrian link equivalent to $L$ as a strongly invertible link. 
Replacements of points having vertical tangents with cusps and the $\tau$-equivariant local modifications around crossings off the $x$-axis do not modify the equivalence class of $L$ as a strongly invertible link. 
On the other hand, it is easy to check that the local modifications around bad crossings on the $x$-axis amount to applications of equivariant Reidemester moves of type M2 from Figure~9 of~\cite{LW21} (beware that in~\cite{LW21} the axis of symmetry is drawn vertically). 
Therefore, our claim follows from (the easy direction of)~\cite[Theorem~2.3]{LW21} and the proof is complete. 
\end{proof}

We illustrate Proposition~\ref{p:legrep} with the following example. A transvergent diagram $D$ of a strongly invertible unknot $U$ is shown on the left-hand side of Figure~\ref{f:ex1}. 
\begin{figure}[ht]
\centering 
\begin{minipage}[c]{5cm}
\begin{tikzpicture}[very thick, scale = 0.8]
\begin{scope}[scale = 1.2]
\draw[very thick] 
    (7.0115,-6.8921) .. controls (7.0115, -6.2306) and (7.9375,
    -5.8337) .. (8.3393, -6.8892) .. controls (8.7280, -7.9102) and (9.6573,
    -7.5535) .. (9.6573, -6.8921);
\path[draw=white,line cap=butt,line join=miter,line width=0.2cm,miter
    limit=4.00] (8.1756,-6.8921) .. controls (8.2550, -6.8921) and (8.4402,
    -6.8921) .. (8.4931, -6.8921);
\path[draw=red,nonzero rule,line cap=butt,line join=miter,line
    width=0.010cm,miter limit=4.00] (6.7469,-6.8921) -- (9.9219,-6.8921);
\path[draw=white,line cap=butt,line join=miter,line width=0.2cm,miter
    limit=4.00] (8.2782,-7.0252) .. controls (8.3250, -6.9114) and (8.3686,
    -6.7896) .. (8.3967, -6.7328);
\draw[very thick] 
    (7.0115,-6.8921) .. controls (7.0115, -7.6858) and (8.0698,
    -7.6858) .. (8.3344, -6.8921) .. controls (8.6793, -5.8571) and (9.6573,
    -6.2306) .. (9.6573, -6.8921);
\end{scope}    
\end{tikzpicture}
\end{minipage}
\begin{minipage}[c]{5cm}
\centering 
\begin{tikzpicture}[very thick, scale = 0.8]
\begin{scope}[scale = -1]
\draw(-2.5,0) 
.. controls +(.75,0) and +(-.5,0) .. (.4,1) 
.. controls +(-.5,0) and +(.5,0) .. (-.6,-.35)
.. controls +(1,0) and +(-.5,0) .. (.85,-.75) .. controls +(.5,0) and +(-.5,0) .. (2.5,0);
\draw[line width = 5, white] (.4,-1) 
.. controls +(-.5,0) and +(.5,0) .. (-.6,.35) 
.. controls +(1,0) and +(-.5,0) .. (.85,.75);
\draw[red, thin] (-3,0) -- (3,0);
\draw[line width = 3, white] (.4,-1) 
.. controls +(-.5,0) and +(.5,0) .. (-.6,.35) 
.. controls +(1,0) and +(-.5,0) .. (.85,.75);
\draw (-2.5,0) 
.. controls +(.75,0) and +(-.5,0) .. (.4,-1) 
.. controls +(-.5,0) and +(.5,0) .. (-.6,.35) 
.. controls +(1,0) and +(-.5,0) .. (.85,.75) 
.. controls +(.5,0) and +(-.5,0) .. (2.5,0);
\end{scope}
\end{tikzpicture}
\end{minipage} \caption{Construction of a transvergent front diagram}
\label{f:ex1}
\end{figure}
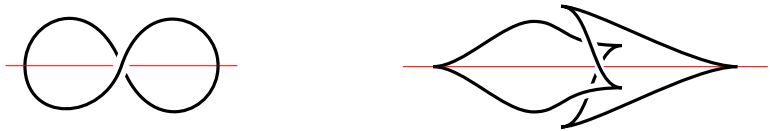
We use the procedure described in the proof of Proposition~\ref{p:legrep},
replacing a neighborhood of the crossing on the $x$-axis with the rightmost picture at the top of Figure~\ref{f:crossings}.
The result is the transvergent front diagram $\F$ shown on the right-hand side of Figure~\ref{f:ex1}.  

\section{The Legendrian equivariant Reidemeister theorem}\label{s:LeRt}

In this section, we prove Theorem~\ref{t:SILRT}, which is an analog of the Legendrian Reidemeister theorem in the equivariant case. 
We also illustrate the equivariant Legendrian Reidemeister moves by show that the diagram on the right of Figure~\ref{f:ex1} and the diagram on the left of Figure~\ref{f:unknots} represent equivalent strongly invertible Legendrian unknots. 

Furthermore, we describe a local modification that can be used to produce Legendrian isotopic strongly invertible Legendrian links which are equivalent as strongly invertible links, but inequivalent as strongly invertible Legendrian links.

Note that the fixed-point set $F_\tau$ of $\tau$ coincides with the $x$-axis, which is tangent to $\xi_{\rm std}$ and naturally oriented. We shall always assume $F_\tau$ to have this natural orientation. 

\begin{proof}[Proof of Theorem~\ref{t:SILRT}]
It is easy to see that if ${\F}_0$ and ${\F}_1$ are related by any of the moves mentioned in the statement then $\L_0$ and $\L_1$ are equivalent. 
Conversely, let $\{\L_t\}_{t\in [0,1]}$ be a family of strongly invertible Legendrian links.
In general, if $t\notin \{0,1\}$ the Legendrian link $\L_t$ may not have a front diagram because the front $\pi(\L_t)$ could be non-generic.
But we claim that by applying a $C^\infty$-small and $\tau$-equivariant Legendrian perturbation to the family, one can make sure that $\L_t$ admits a front diagram $\widetilde{\F}_t$ for every value of $t$ except for $t$ belonging to a finite (possibly empty) set of ``critical'' values. 
The moves in the list of Figure~\ref{f:moves} are obtained by comparing the fronts $\widetilde{\F}_t$ for values of $t$ which are immediately before and after each critical value, as we now explain. 
We argue in a way similar to the proof of Proposition~\ref{p:intr-diagrs}. There is a decomposition  
\[
\L_t = \A_t \cup \tau(\A_t), 
\]
where $\{\A_t \}_{t\in [0,1]}$ is a family of regular Legendrian arcs such that, for each $t\in [0,1]$, $\A_t$ is a finite union of $r$
smoothly embedded arcs such that $\A_t\cap\tau(\A_t)$ consists of the fixed points of $\tau\vert_{\L_t}$. More precisely, 
\[
\del\A_t = \L_t \cap \tau(\L_t) = \L_t \cap \{ (x,0,0)\mid x\in\R \}
\]
consists of $2r$ endpoints. Furthermore, the tangent line to $\A_t$ at each endpoint is the direction of the $y$-axis.

Relative versions of the arguments in~\cite{Sw92} imply that, after a $C^\infty$-small Legendrian perturbation of 
$\{\A_t \}_{t\in [0,1]}$ which coincides with the identity on the $x$-axis, there exists a finite subset $\{t_1,\ldots,t_k\}\subset (0,1)$ such that, for each $t\not\in\{t_1,\ldots,t_k\}$, the front 
$\F_t = \pi(\A_t)$ is front diagram with no crossings nor cusps on the $x$-axis and such that the endpoints of $\F_t$ are simple. 
Let us call such fronts {\em $x$-generic}. By general position, the projection $\pi(\A_{t})$ can stop being an $x$-generic front at, say, $t_i$ only if, as $t$ tends towards $t_i$, a crossing or a cusp moves towards the $x$-axis or an endpoint. 
Given the above discussion, we may assume that, for $\ep>0$ sufficiently small, the whole front stays generic except in a small ball. 
Then, for each $i=1,\ldots, k$ one of the following must hold as $t$ goes through $t_i$: 
\begin{enumerate}
    \item $\F_t$ changes, away from its endpoints, as in a standard Legendrian Reidemeister move; 
    \item one strand of $\F_t$ goes through some endpoint as in Figure~\ref{f:transitions}(a);
    \item one crossing of $\F_t$ crosses the $x$-axis as in Figure~\ref{f:transitions}(b);
    \item one cusp of $\F_t$ crosses the $x$-axis as in Figure~\ref{f:transitions}(c);
    \item one cusp reaches an endpoint and disappears as in Figure~\ref{f:transitions}(d). 
\end{enumerate}
\begin{figure}[ht]
    \centering
    \begin{tikzpicture}[scale=0.5]

\begin{scope}[shift = {+(-5,0)}]
\draw[very thin, red] (0,0) -- (3,0);
\draw[fill, very thick] (1.5,0) circle (.03);
\draw[very thick] (1.5,0) circle (.03) .. controls +(.5,0) and +(-.3,-1) .. (3,2);
\draw[white , line width = 3] (0,1) .. controls +(-1,0) and +(.5,-1) .. (3,-1);
\draw[very thick]  (0,1) .. controls +(.5,-1) and  +(-1,0) .. (3,-1);
\node at (1.5,-1.5) {$t_i - \epsilon$};
\end{scope}

\draw[very thin, red] (0,0) -- (3,0);
\draw[fill, very thick] (1.5,0) circle (.03);
\draw[very thick] (1.5,0) circle (.03) .. controls +(.5,0) and +(-.3,-1) .. (3,2);
\draw[line width = 2, white] (0,1) -- (3,-1);
\draw[very thick] (0,1) -- (3,-1);
\node at (1.5,-1.5) {$t_i$};
\node at (-1,0) {$\rightsquigarrow$};
\node at (4,0) {$\rightsquigarrow$};
\node at (1.5,-2.5) {(a)};

\begin{scope}[shift = {+(5,0)}]
\draw[very thin, red] (0,0) -- (3,0);
\draw[fill, very thick] (1.5,0) circle (.03);
\draw[very thick] (1.5,0) circle (.03) .. controls +(.5,0) and +(-.3,-1) .. (3,2);
\draw[white , line width = 3]  (0,1) .. controls +(1,0) and +(-.5,1) .. (3,-1);
\draw[very thick] (0,1) .. controls +(1,0) and +(-.5,1) .. (3,-1);
\node at (1.5,-1.5) {$t_i+ \epsilon$};
\end{scope}

\begin{scope}[shift = {+(10,0)}]
\draw[very thick] (0,-1) .. controls +(1,0) and +(-.5,-1) .. (3,1);
 \draw[white , line width = 2] (0,0) -- (3,0);
\draw[very thin, red] (0,0) -- (3,0);
\draw[white , line width = 3] (0,1) .. controls +(-1,0) and +(.5,-1) .. (3,-1);
\draw[very thick]  (0,1) .. controls +(.5,-1) and  +(-1,0) .. (3,-1);
\node at (1.5,-1.5) {$t_i - \epsilon$};
\end{scope}

\begin{scope}[shift = {+(15,0)}]
\draw[very thick] (0,-1) -- (3,1);
\draw[white , line width = 3] (0,1) -- (3,-1);
\draw[very thin, red] (0,0) -- (3,0);
\draw[white , line width = 2.5] (0,1) -- (3,-1);
\draw[very thick] (0,1) -- (3,-1);
\node at (-1,0) {$\rightsquigarrow$};
\node at (4,0) {$\rightsquigarrow$};
\node at (1.5,-1.5) {$t_i$};
\node at (1.5,-2.5) {(b)};
\end{scope}

\begin{scope}[shift = {+(20,0)}]
\draw[very thick]  (3,1) .. controls +(-1,0) and +(.5,1) .. (0,-1);
\draw[white , line width = 2] (0,0) -- (3,0);
\draw[very thin, red] (0,0) -- (3,0);
\draw[white , line width = 3]  (0,1) .. controls +(1,0) and +(-.5,1) .. (3,-1);
\draw[very thick] (0,1) .. controls +(1,0) and +(-.5,1) .. (3,-1);
\node at (1.5,-1.5) {$t_i + \epsilon$};
\end{scope}

 \begin{scope}[shift = {+(-5,-5)}]
\draw[very thick] (.75,-.75) .. controls +(.5,.25) and +(-.3,-1) .. (3,2);
\draw[very thick] (.75,-.75) .. controls +(.5,.25) and +(-.3,-.3) .. (3,1);
\draw[white , line width = 2] (0,0) -- (3,0);
\draw[very thin, red] (0,0) -- (3,0);
\node at (1.5,-1.5) {$t_i - \epsilon$};
\end{scope}

\begin{scope}[shift = {+(0,-5)}]
\draw[very thin, red] (0,0) -- (3,0);
\draw[very thick] (1.5,0) .. controls +(.5,.25) and +(-.3,-1) .. (3,2);
\draw[very thick] (1.5,0) .. controls +(.5,.25) and +(-.3,-.3) .. (3,1);
\node at (-1,0) {$\rightsquigarrow$};
\node at (4,0) {$\rightsquigarrow$};
\node at (1.5,-1.5) {$t_i$};
\node at (1.5,-2.5) {(c)};
\end{scope}

\begin{scope}[shift = {+(5,-5)}]
\draw[very thin, red] (0,0) -- (3,0);
\draw[very thick] (2,.5) .. controls +(.5,.25) and +(-.3,-1) .. (3,2);
\draw[very thick] (2,.5) .. controls +(.5,.25) and +(-.3,-.3) .. (3,1);
\node at (1.5,-1.5) {$t_i + \epsilon$};
\end{scope}

\begin{scope}[shift = {+(10,-5)}]
\draw[very thick] (-.25,-1) .. controls +(.5,.25) and +(-.3,-1) .. (3,2);
\draw[white , line width = 2] (0,0) -- (3,0);
\draw[very thin, red] (0,0) -- (3,0);
\draw[very thick] (-.25,-1) .. controls +(.5,.25) and +(-.3,0) .. (2,0) circle (.03);
\node at (1.5,-1.5) {$t_i - \epsilon$};
\end{scope}

\begin{scope}[shift = {+(15,-5)}]
\draw[very thick] (.75,-.5) .. controls +(.5,.25) and +(-.3,-1) .. (3,2);
\draw[white , line width = 2] (0,0) -- (3,0);
\draw[very thin, red] (0,0) -- (3,0);
\draw[very thick] (.75,-.5) .. controls +(.5,.25) and +(-.3,0) .. (2,0) circle (.03);
\node at (1.5,-1.5) {$t_i - \frac{\epsilon}{2}$};
\node at (-1,0) {$\rightsquigarrow$};
\node at (4,0) {$\rightsquigarrow$};
\node at (1.5,-2.5) {(d)};
\end{scope}

\begin{scope}[shift = {+(20,-5)}] 
\draw[very thin, red] (0,0) -- (3,0);
\draw[very thick] (3,2) .. controls +(-.3,-1) and +(.3,0) .. (2,0);
\draw (2,0) circle (.03);
\node at (1.5,-1.5) {$t_i + \epsilon$};
\end{scope}
\end{tikzpicture}     \caption{Examples of transitions through non-$x$-generic fronts.}
    \label{f:transitions}
\end{figure}
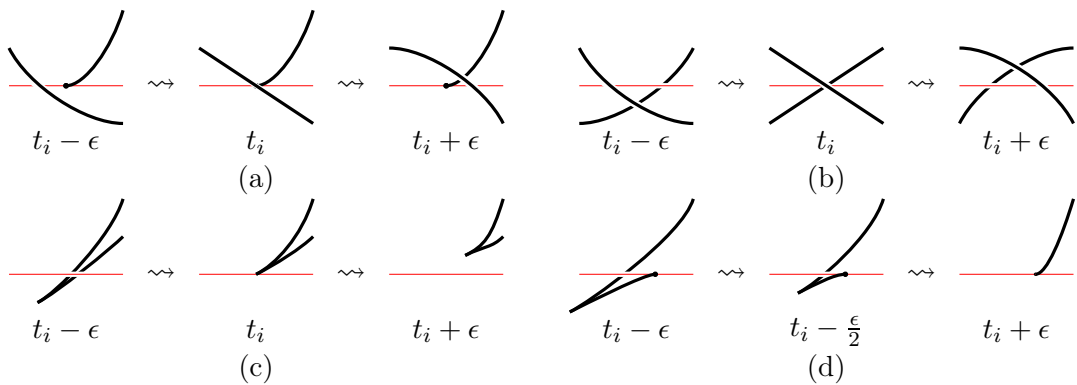
Observe that the above requirements are equivalent to asking that the Legendrian graph given by the union of $\A_{t}$ with the $x$-axis has a generic front for all but finitely many values of $t$, for which one of (1) -- (5) happens. 
Now note that if $c\in\pi(\A_t)\setminus\{\text{$x$-axis}\}$ is a crossing or a cusp, by general position we may suppose that $\tau(c)$ is neither a crossing nor a cusp of $\pi(\A_t)$.
Similarly, if $p,\tau(p)\in\pi(\A_t)\setminus\{\text{$x$-axis}\}$ and $\ell_p$ is the line in the $xz$-plane tangent to $\pi(\A_t)$ at $p$, we may assume that $\tau(\ell_p)$ is distinct from the line tangent to $\pi(\A_t)$ at $\tau(p)$. 
Under these assumptions 
\[
\pi(\L_t) = \pi(\A_t)\cup\pi(\tau\A_t) 
\]
is not a generic front diagram if and only if $t\in\{t_1,\ldots, t_k\}$ and one of the following holds: 
\begin{itemize}
    \item $A_t$ does not have a generic front diagram and either (1) or (2) above holds;
    \item $A_t$ has a generic front diagram and (3), (4), or (5) above holds.
\end{itemize}
It is now straightforward to verify at each $t\in\{t_1,\ldots, t_k\}$ the front $\pi(\L_t) = \widetilde{\F}_t$ changes according to one of the moves of in the statement of the theorem, so the proof is concluded.
\end{proof}

Theorem~\ref{t:SILRT} can be applied to show that the strongly invertible Legendrian unknot on the left-hand side of Figure~\ref{f:unknots} is equivalent to the strongly invertible Legendrian unknot on the right-hand side of Figure~\ref{f:ex1}. Indeed, this fact follows immediately from the following more general statement. 

\begin{cor}\label{c:equ}
Four transvergent front diagrams which differ locally as in Figure~\ref{f:equ} represent pairwise equivalent strongly invertible Legendrian links. 
\begin{figure}[ht]
\centering
\begin{minipage}[c]{3.5cm}
\begin{tikzpicture}[scale = 1.2]
\path[draw=red,line cap=butt,line join=miter,line width=0.010cm] 
(7.2231,-4.4067) -- (9.5914,-4.4095);
\draw[very thick] 
(7.5714,-4.1061) .. controls (7.8745, -4.1179) and (8.0497, -3.7507) .. (8.4827, -3.7397) .. controls (8.9157, -3.7286) and (8.9734, -4.0064) .. (9.2965, -4.0042);
\draw[very thick]
(8.5467,-4.4049) .. controls (8.8794, -4.4044) and (8.8940, -3.9974) .. (9.2945, -4.0035);
\draw[very thick]
(8.5481,-4.4055) .. controls (8.8808, -4.4060) and (8.8821, -4.8112) .. (9.2826, -4.8051);
\draw[very thick]
(7.5642,-4.7031) .. controls (7.8672, -4.6913) and (8.0425, -5.0585) .. (8.4755, -5.0695) .. controls (8.9085, -5.0806) and (8.9662, -4.8029) .. (9.2892, -4.8050);
\end{tikzpicture}
\end{minipage}
\begin{minipage}[c]{3.5cm}
\begin{tikzpicture}[scale = 1.2]
\draw[very thick]
(8.1035,-4.7173) .. controls (8.2977, -4.4211) and (8.8940, -3.9974) .. (9.2945, -4.0035);
\path[draw=black,line cap=butt,line join=miter,line width=0.050cm] 
(7.4409,-4.6398) .. controls (7.7349, -4.6442) and (8.0515, -5.0586) .. (8.4845, -5.0697) .. controls (8.9175, -5.0807) and (9.0360, -4.8059) .. (9.2759, -4.8038);
\path[draw=black,line cap=butt,line join=miter,line width=0.050cm] (8.1035,-4.7173) .. controls (8.1433, -4.6020) and (8.7608, -4.4035) .. (9.1612, -4.4096);
\path[draw=white,line cap=butt,line join=miter,line width=0.10cm] (8.2895,-4.4067) -- (8.5237,-4.4098);
\path[draw=red,line cap=butt,line join=miter,line width=0.010cm] (7.2231,-4.4067) -- (9.5914,-4.4095);
\path[draw=white,line cap=butt,line join=miter,line width=0.040cm] (8.3252,-4.3511) -- (8.4686,-4.4614);
\path[draw=white,line cap=butt,line join=miter,line width=0.10cm] (8.4808,-4.4724) -- (8.5768,-4.5416);
\path[draw=black,line cap=butt,line join=miter,line width=0.050cm] (8.0880,-4.0891) .. controls (8.2821, -4.3853) and (8.8785, -4.8090) .. (9.2789, -4.8030);
\path[draw=white,line cap=butt,line join=miter,line width=0.10cm] (8.4516,-4.2830) -- (8.5868,-4.3237);
\path[draw=black,line cap=butt,line join=miter,line width=0.050cm] 
(8.0911,-4.0916) .. controls (8.1309, -4.2070) and (8.7605, -4.4148) .. (9.1610, -4.4087);
\path[draw=black,line cap=butt,line join=miter,line width=0.050cm] 
(7.4426,-4.1078) .. controls (7.7536, -4.0977) and (8.0497, -3.7507) .. (8.4827, -3.7397) .. controls (8.9157, -3.7286) and (8.9734, -4.0064) .. (9.2965, -4.0042);
\end{tikzpicture}
\end{minipage}
\begin{minipage}[c]{3.5cm}
\begin{tikzpicture}[scale = 1.5]
\draw[very thick] 
(7.5746,-4.1163) .. controls (8.1896, -4.5488) and (8.2545, -4.0258) .. (9.1373, -4.0230);
\path[draw=white,line cap=butt,line join=miter,line width=0.10cm] (8.1760,-4.2073) -- (8.3112,-4.2479);
\draw[very thick] 
(7.9507,-4.6840) .. controls (8.1448, -4.3878) and (8.8504, -4.0177) .. (9.1307, -4.0216);
\draw[very thick] 
(7.9507,-4.6840) .. controls (7.9904, -4.5687) and (8.6079, -4.3702) .. (9.0084, -4.3763);
\path[draw=white,line cap=butt,line join=miter,line width=0.040cm] (8.2084,-4.3743) -- (8.3648,-4.3775);
\path[draw=red,line cap=butt,line join=miter,line width=0.010cm] (7.4392,-4.3754) -- (9.2943,-4.3770);
\path[draw=white,line cap=butt,line join=miter,line width=0.10cm] (8.2146,-4.3296) -- (8.3522,-4.4189);
\path[draw=white,line cap=butt,line join=miter,line width=0.10cm] (8.3700,-4.4274) -- (8.4727,-4.4923);
\path[draw=white,line cap=butt,line join=miter,line width=0.10cm] (8.3589,-4.2664) -- (8.4942,-4.3071);
\path[draw=white,line cap=butt,line join=miter,line width=0.10cm] (8.0828,-4.2212) -- (8.1997,-4.3214);
\draw[very thick]
(7.9400,-4.0641) .. controls (8.1341, -4.3603) and (8.8579, -4.7344) .. (9.1382, -4.7305);
\draw[very thick]
(7.9383,-4.0583) .. controls (7.9780, -4.1736) and (8.6077, -4.3815) .. (9.0081, -4.3754);
\path[draw=white,line cap=butt,line join=miter,line width=0.10cm] (8.0909,-4.4668) -- (8.2755,-4.5388);
\draw[very thick]
(7.5701,-4.6469) .. controls (8.1851, -4.2144) and (8.2500, -4.7374) .. (9.1328, -4.7402);
\end{tikzpicture}
\end{minipage}
\begin{minipage}[c]{3.5cm}
\begin{tikzpicture}[scale = 1.3]
\draw[very thick]
(7.4987,-4.8833) .. controls (8.3018, -5.0440) and (8.1640, -4.7464) .. (8.8327, -4.7990);
\draw[very thick]
(7.8307,-5.2813) .. controls (8.2697, -5.2653) and (8.3734, -4.8033) .. (8.8298, -4.8011);
\draw[very thick]
(7.8376,-5.2806) .. controls (9.2769, -5.5504) and (9.2595, -5.0289) .. (9.6715, -5.0141);
\path[draw=white,line cap=butt,line join=miter,line width=0.040cm] (8.2956,-5.0161) -- (8.4520,-5.0193);
\path[draw=red,line cap=butt,line join=miter,line width=0.010cm] (7.3004,-5.0169) -- (9.9017,-5.0166);
\path[draw=white,line cap=butt,line join=miter,line width=0.10cm] (8.3305,-4.9763) -- (8.4475,-5.0871);
\path[draw=white,line cap=butt,line join=miter,line width=0.10cm] (8.3700,-5.0693) -- (8.4727,-5.1341);
\path[draw=white,line cap=butt,line join=miter,line width=0.10cm] (8.1424,-4.8201) -- (8.2593,-4.9202);
\draw[very thick]
(7.8396,-4.7504) .. controls (8.3313, -4.7391) and (8.3678, -5.2430) .. (8.8385, -5.2360);
\draw[very thick]
(7.8376,-4.7504) .. controls (9.2220, -4.4635) and (9.3601, -5.0287) .. (9.6744, -5.0119);
\path[draw=white,line cap=butt,line join=miter,line width=0.10cm] (8.1247,-5.1160) -- (8.3094,-5.1880);
\draw[very thick]
(7.5037,-5.1497) .. controls (8.3068, -4.9890) and (8.1690, -5.2866) .. (8.8377, -5.2340);
\end{tikzpicture}
\end{minipage} \caption{Equivalent strongly invertible Legendrian links}
\label{f:equ}
\end{figure}
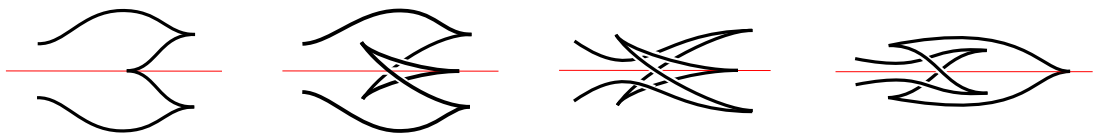
\end{cor}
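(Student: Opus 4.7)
The plan is to apply Theorem~\ref{t:SILRT} directly: it suffices to exhibit, for each pair of the four local pictures in Figure~\ref{f:equ}, a finite sequence of equivariant planar isotopies, pairs of $LR$-moves applied symmetrically with respect to the $x$-axis, and moves from Figure~\ref{f:moves} (possibly $\pi$-rotated) that transforms one picture into the other. By transitivity of the equivalence relation on strongly invertible Legendrian links, I would fix the first local diagram as a baseline and show that each of the other three reduces to it, so that three explicit move sequences are enough.

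To pass from the second diagram to the first I would identify, up to equivariant planar isotopy, a local region in the second diagram that matches the left-hand side of a CX-move, and then apply that move to eliminate the extra cusp-crossing pair. The third diagram contains a more involved combination of crossings together with a cusp-like structure, which should match, after a preliminary equivariant planar isotopy, the left-hand side of an XX-move or its $\pi$-rotation; applying this move reduces the third diagram either to the second or directly to the first. The fourth diagram is the most elaborate of the four, and I would handle it by first applying a CC-move (or its $\pi$-rotated version) to contract the pair of symmetric cusps, then invoking a CR-move to remove the remaining residual configuration, thereby reducing it either to the third diagram (and then using the reduction already performed) or directly to the first.

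The main obstacle here is combinatorial rather than conceptual: at each step one must verify that the local region to be modified matches, precisely and up to equivariant planar isotopy, the left-hand side of one of the canonical moves in Figure~\ref{f:moves}, while keeping track of the $\pi$-rotational symmetries and the behaviour of strands that lie off the $x$-axis. No geometric input beyond Theorem~\ref{t:SILRT} is required, and the argument reduces to a bookkeeping exercise that is cleanest to present by drawing the intermediate transvergent fronts side by side and labelling each arrow with the corresponding move from Figure~\ref{f:moves}.
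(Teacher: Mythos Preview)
Your overall strategy---invoke Theorem~\ref{t:SILRT} and exhibit explicit sequences of moves connecting the four local pictures---is exactly right, and the paper does the same. The gap is that the specific moves you name are wrong, and the hedged phrasing (``should match'', ``either to the second or directly to the first'') shows you have not actually checked them against the diagrams.

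The paper's proof is very short: labelling the four diagrams $\F_0,\F_1,\F_2,\F_3$ from left to right, one has
\[
\F_0 \xrightarrow{\ CR\ } \F_1 \xrightarrow{\ \text{symmetric }R2\ } \F_2 \xrightarrow{\ \text{symmetric }R2\ } \F_3.
\]
Compare this with your guesses. The passage $\F_0\leftrightarrow\F_1$ is a $CR$-move (creation of a crossing together with a cusp on the axis), not a $CX$-move; $CX$ slides an existing cusp past an existing crossing and leaves both present on each side, which is not what happens here. The passages $\F_1\leftrightarrow\F_2$ and $\F_2\leftrightarrow\F_3$ are each a \emph{pair of ordinary $LR$-moves of type $R2$} applied symmetrically above and below the axis---not the axis-moves $XX$, $CC$, or $CR$. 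Your proposed $XX$ and $CC+CR$ sequences involve moves whose local models (two crossings on the axis, respectively two cusps near the axis) simply do not appear in $\F_2$ or $\F_3$.

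So the ``bookkeeping exercise'' you defer is in fact the entire content of the proof, and it has to be done correctly: draw the four local pictures, look at what actually changes between consecutive ones, and match that change to the move list. Once you do this carefully you will see the three-step sequence above.
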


\begin{proof} 
Denote by $\F_0,\F_1,\F_2$ and $\F_3$ the transvergent front diagrams differing in a disk as in Figure~\ref{f:equ}, numbered from the left to the right. 
Then, $\F_1$ is obtained from $\F_0$ applying a $CR$-move, and by a symmetric $LR$-move of type R2 one obtains $\F_2$ from $\F_1$ and $\F_3$ from $\F_2$.
\end{proof}

\section{Stabilizations of strongly invertible Legendrian links}\label{s:stab}

The purpose of this section is to define stabilizations of strongly invertible Legendrian links. 
This will be done using transvergent front diagrams. 

We will say that such a diagram is {\em connected} if it represents a strongly invertible knot. 
Similarly, given a transvergent front diagram $\F$ representing a Legendrian link $\L$, we refer to the projection of each component of $\L$ simply as to a {\em connected component of $\F$}. 

The following definition introduces stabilizations for transvergent front diagrams. 
They will be used, in conjunction with Theorem~\ref{t:SILRT}, to define stabilizations for equivalence classes of strongly invertible Legendrian links. 

\begin{defn}\label{d:frontstabs}
Let $\F$ be a connected, transvergent front diagram and $c\in\F$ a cusp on the $x$-axis. 
We denote by $S(\F,c)$ -- respectively $T(\F,c)$ -- the transvergent front diagram obtained by the operation shown 
in Figure~\ref{f:S-stabs} -- respectively Figure~\ref{f:T-stabs}.  
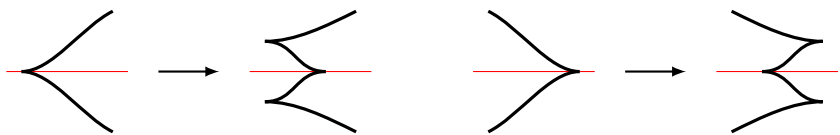
\begin{figure}[ht]
\centering
\begin{minipage}[c]{6cm}
\begin{tikzpicture}[scale = 0.8]
\draw[red] (-3,-5) -- (-1,-5); 
\draw[very thick] (-1.25, -6) .. controls +(-.5,.25) and +(.5,0) .. (-2.75, -5);
\draw[very thick] (-1.25, -4) .. controls +(-.5,-.25) and +(.5,0) ..(-2.75, -5);
\draw[thick, -latex] (-.5,-5) -- (.5,-5);
\begin{scope}[shift = {+(4,0)}]
\draw[red] (-3,-5) -- (-1,-5); 
\draw[very thick] (-1.25, -6) .. controls +(-.5,.25) and +(.5,0) .. (-2.75, -5.5).. controls +(.5,0) and +(-.5,0) .. (-1.75, -5);
\draw[very thick] (-1.25, -4) .. controls +(-.5,-.25) and +(.5,0) ..(-2.75, -4.5).. controls +(.5,0) and +(-.5,0) .. (-1.75,-5);
\end{scope}
\end{tikzpicture}
\end{minipage}
\begin{minipage}[c]{6cm}
\begin{tikzpicture}[scale = 0.8]
\draw[red] (-3,-5) -- (-1,-5); 
\draw[very thick] (-2.75, -6) .. controls +(.5,.25) and +(-.5,0) .. (-1.25, -5);
\draw[very thick] (-2.75, -4) .. controls +(.5,-.25) and +(-.5,0) ..(-1.25, -5);
\draw[thick, -latex] (-.5,-5) -- (.5,-5);

\begin{scope}[shift = {+(4,0)}]
\draw[red] (-3,-5) -- (-1,-5); 
\draw[very thick] (-2.75, -6) .. controls +(.5,.25) and +(-.5,0) .. (-1.25, -5.5).. controls +(-.5,0) and +(.5,0) .. (-2.25, -5);
\draw[very thick] (-2.75, -4) .. controls +(.5,-.25) and +(-.5,0) ..(-1.25, -4.5).. controls +(-.5,0) and +(.5,0) .. (-2.25, -5);
\end{scope}
\end{tikzpicture}
\end{minipage}

\caption{$S$-stabilization of a transvergent front diagram}
\label{f:S-stabs}
\end{figure}
\begin{figure}[ht]
\centering
\begin{minipage}[c]{3cm}
\begin{tikzpicture}[scale = 0.8]
\draw[red] (-3,-5) -- (-1,-5); 
\draw[very thick] (-1.25, -6) .. controls +(-.5,.25) and +(.5,0) .. (-2.75, -5);
\draw[very thick] (-1.25, -4) .. controls +(-.5,-.25) and +(.5,0) ..(-2.75, -5);
\draw[thick, -latex] (-.5,-5) -- (.5,-5);
\end{tikzpicture}
\end{minipage}
\begin{minipage}[c]{3cm}
\begin{tikzpicture}[scale = 1]
\draw[very thick] (7.6660,-2.0875) .. controls (6.8679, -2.0913) and (6.8146, -0.9288) .. (6.4090, -0.9218) .. controls (6.0115, -0.9150) and (5.9955, -1.5752) .. (5.6270, -1.5786);
\draw[very thick] (7.6643,-1.0420) .. controls (6.8926, -1.0449) and (6.8146, -2.2262) .. (6.4090, -2.2331) .. controls (6.0115, -2.2400) and (5.9955, -1.5797) .. (5.6270, -1.5763);
\path[draw=red,line cap=butt,line join=miter,line width=0.010cm] (7.7956,-1.5741) -- (5.4152,-1.5752);
\path[draw=white,line cap=butt,line join=miter,line width=0.1cm] (6.9656,-1.6337) -- (6.8906,-1.5104);
\draw[very thick] (7.6660,-2.0875) .. controls (6.8679, -2.0913) and (6.8146, -0.9288) .. (6.4090, -0.9218) .. controls (6.0115, -0.9150) and (5.9955, -1.5752) .. (5.6270, -1.5786);
\end{tikzpicture}
\end{minipage}
\begin{minipage}[c]{3cm}
\begin{tikzpicture}[scale = 0.8]
\draw[red] (-3,-5) -- (-1,-5); 
\draw[very thick] (-2.75, -6) .. controls +(.5,.25) and +(-.5,0) .. (-1.25, -5);
\draw[very thick] (-2.75, -4) .. controls +(.5,-.25) and +(-.5,0) ..(-1.25, -5);
\draw[thick, -latex] (-.5,-5) -- (.5,-5);
\end{tikzpicture}    
\end{minipage}
\begin{minipage}[c]{3cm}
\begin{tikzpicture}[scale = 1]
\draw[very thick]
(5.5448,-2.0875) .. controls (6.3429, -2.0913) and (6.3962, -0.9288) .. (6.8018, -0.9218) .. controls (7.1993, -0.9150) and (7.2153, -1.5752) .. (7.5838, -1.5786);
\path[draw=red,line cap=butt,line join=miter,line width=0.010cm] (5.4152,-1.5741) -- (7.7956,-1.5752);
\path[draw=white,line cap=butt,line join=miter,line width=0.15cm] (6.4113,-1.8412) -- (6.1383,-1.3059);
\draw[very thick]
(5.5465,-1.0420) .. controls (6.3182, -1.0449) and (6.3962, -2.2262) .. (6.8018, -2.2331) .. controls (7.1993, -2.2400) and (7.2153, -1.5797) .. (7.5838, -1.5763);
\end{tikzpicture}
\end{minipage} \caption{$T$-tabilization of a transvergent front diagram}
\label{f:T-stabs}
\end{figure}
On the left-hand sides of the pictures the case of a left-pointing cusp is shown, on the right-hand sides the case of a right-pointing cusp. 
The fronts $S(\F,c)$ and $T(\F,c)$ are, respectively, the {\em $S$-stabilization} and the {\em $T$-stabilizations} of~$\F$ at $c$. 
\end{defn}

Let $\F$ be a transvergent front diagram and $c\in \F$ a cusp on the $x$-axis. We say that a cusp $c$ is {\em standard} if $c$ is a left-pointing left cusp or a right-pointing right cusp. 
For instance, the cusps of the diagram of Figure~\ref{f:unknot} are both standard cusps.

\begin{defn}\label{d:linkstabs}
Let $(\L,p)$ be a pair consisting of a strongly invertible Legendrian link $\L$ and a distinguished point $p\in\L$ with $\tau(p)=p$. 
Given $\F$, a transvergent front diagram representing~$\L$, denote by $c\in \F$ the cusp on the $x$-axis corresponding to $p$.
The {\em $S$-stabilization} of $\L$ at $p$ is the strongly invertible Legendrian link $S(\L,p)$ determined by~$S(\F,c)$ if $c$ is in standard form, and by~$T(\F,c)$ otherwise. 
Similarly, the {\em $T$-stabilization} of $\L$ at $p$ is the strongly invertible Legendrian link $T(\L,p)$ determined by $T(\F,c)$ if $c$ is in standard form, and by $S(\F,c)$ otherwise. 
\end{defn}

We claim that Definition~\ref{d:linkstabs} is well-posed. 
Let $\F$ and $M\F$ be two transvergent front diagrams such that $M\F$ is obtained from $\F$ applying one of the moves $M$ of Theorem~\ref{t:SILRT}. There is an obvious bijective correspondence between the crossings and the cusps of $\F$ and $M\F$ outside the local picture where the two fronts differ. This correspondence can be extended to include an identification of the cusps on the axis of $\F$ and $M\F$. 
We shall denote by $Mc$ the cusp of $M\F$ corresponding to a cusp $c\in\F$ under the move $M$.

To check that Definition~\ref{d:linkstabs} is well-posed it will suffice to establish the following two lemmas. 

\begin{lem}\label{l:Z-M}
Let $\F$ be a transvergent front diagram and $c\in\F$ a cusp on the $x$-axis. 
Suppose that $Z\in\{S, T\}$ and let $M$ be one of the moves of Theorem~\ref{t:SILRT} with $M\neq CR$. 
Then, the diagrams $Z(M\F,Mc)$ and $Z(\F,c)$ represent equivalent strongly invertible links. 
\end{lem}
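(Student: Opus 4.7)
My plan is to prove a slightly stronger statement: for any move $M$ in Theorem~\ref{t:SILRT} with $M \neq CR$, the diagrams $Z(M\F, Mc)$ and $M(Z(\F, c))$ in fact \emph{coincide} as transvergent front diagrams. The conclusion of the lemma then follows, since $M(Z(\F, c))$ differs from $Z(\F, c)$ by an equivariant Legendrian Reidemeister move and therefore represents an equivalent strongly invertible Legendrian link, and \emph{a fortiori} an equivalent strongly invertible link. The key observation behind this reduction is that the stabilization $Z(\F, c)$ alters $\F$ only inside a small $\tau$-invariant disk $U_c$ around the cusp $c$, while each equivariant Reidemeister move $M$ is supported in some $\tau$-invariant region $R_M$, and the main step is to arrange $R_M \cap U_c = \emptyset$.

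To produce such disjointness I would proceed case by case on the type of $M$. Equivariant planar isotopies can be interpolated with the identity near $c$, so after replacing the isotopy by an equivalent one we may assume it fixes $U_c$ pointwise. Symmetric pairs of $LR$-moves occur off the $x$-axis at pairs $(p,\tau(p))$, and since $c$ is on the $x$-axis we can choose the support to be disjoint from $U_c$. Finally, for the moves in Figure~\ref{f:moves} other than $CR$, a direct inspection of the local models shows that $CX$, $XX$, and $CC$ each involve strands and cusps that do not include any cusp lying on the $x$-axis: so by shrinking either $R_M$ or $U_c$ we may again assume $R_M \cap U_c = \emptyset$. Once this disjointness is achieved, $M$ does not touch the cusp $c$; under the canonical bijection of cusps on the $x$-axis, $Mc$ corresponds to $c$, $M\F$ agrees with $\F$ throughout $U_c$, the local pattern defining $Z(\cdot, Mc)$ in $M\F$ is identical to the pattern defining $Z(\cdot, c)$ in $\F$, and outside $U_c$ both $Z(M\F, Mc)$ and $M(Z(\F, c))$ coincide with $M\F$. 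The two diagrams therefore agree on the nose.

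The hard part, and the reason for excluding $CR$, is precisely that the $CR$ move is the unique member of the list whose local model contains a cusp on the $x$-axis, and it can create or destroy such a cusp. For any other move the set of cusps on the $x$-axis is preserved and the canonical identification $c \leftrightarrow Mc$ makes the above commutation trivial; for $CR$ this identification breaks down, and comparing $Z(CR\F, CRc)$ with $Z(\F, c)$ requires enumerating the configurations of standard versus non-standard cusps before and after the move and carefully matching the resulting $S$- and $T$-stabilizations. I expect this more delicate interaction is what is isolated in a companion lemma, rather than absorbed into the argument above.
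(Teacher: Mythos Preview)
Your disjointness argument works for $XX$, $CC$, equivariant planar isotopies, and symmetric pairs of $LR$-moves, and it also works for $CX$ when the cusp appearing in the $CX$ local model is \emph{not} the distinguished cusp $c$. But your blanket claim that ``$CX$, $XX$, and $CC$ each involve strands and cusps that do not include any cusp lying on the $x$-axis'' is wrong for $CX$: the ``$C$'' in $CX$ stands precisely for a cusp on the $x$-axis, as is visible in Figure~\ref{f:moves}. When $M=CX$ is applied at the cusp $c$ itself, no shrinking of $R_M$ or $U_c$ can make them disjoint, and your stronger assertion that $Z(M\F,Mc)=M(Z(\F,c))$ as diagrams simply fails.

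The paper's proof isolates exactly this case. For $M=CX$ acting at $c$, it exhibits explicit short sequences of equivariant Legendrian Reidemeister moves connecting $Z(M\F,Mc)$ to $Z(\F,c)$: one $CX$-move together with two symmetric Reidemeister moves when $Z=S$, and one $CX$-move together with one $XX$-move when $Z=T$ (Figures~\ref{f:Z-M1} and~\ref{f:Z-M2}). So the lemma is still true, but the $CX$-at-$c$ case requires a genuine check rather than a support argument; your proposal as written has a gap precisely there.
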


\begin{proof}
When $M\in\{XX,CC,SR\}$ the statement is clear because $M$ and $Z$ are supported on disjoint discs. 
The same holds if $M=CX$ but it involves a cusp different from $c$.  
If $M=CX$ involves the cusp $c$ and $Z= S$, then $Z(\F,c)$ is obtained from $Z(M\F,Mc)$ by applying one $CX$-move and two symmetric Reidemeister moves, see~Figure~\ref{f:Z-M1} for an exemplification. 
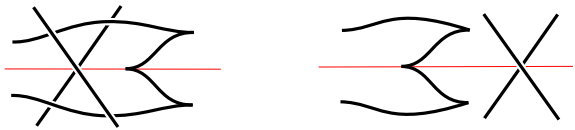
\begin{figure}[ht]
    \centering
    \begin{minipage}[c]{4cm}
\begin{tikzpicture}[scale = 1.2]
\draw[very thick]
    (8.4981,-3.7120) .. controls (8.4812, -3.7442) and (7.5721, -5.0349) ..
    (7.5721, -5.0349);    
\path[draw=red,line cap=butt,line join=miter,line width=0.010cm]
    (7.2231,-4.4070) -- (9.5914,-4.4099);
 \path[draw=white,line cap=butt,line join=miter,line width=0.10cm,miter
    limit=4.00] (8.3333,-3.8848) .. controls (8.3430, -3.8855) and (8.4358,
    -3.8857) .. (8.4358, -3.8857);
\draw[very thick]
    (7.3075,-4.1089) .. controls (7.6106, -4.1207) and (7.8984, -3.8958) ..
    (8.3314, -3.8847) .. controls (8.7643, -3.8737) and (8.9734, -4.0067) ..
    (9.2965, -4.0045);
\draw[very thick]
    (8.5467,-4.4053) .. controls (8.8794, -4.4048) and (8.8940, -3.9977) ..
    (9.2945, -4.0038);
\draw[very thick]
    (8.5481,-4.4058) .. controls (8.8808, -4.4063) and (8.8821, -4.8115) ..
    (9.2826, -4.8055);
\path[draw=white,line cap=butt,line join=miter,line width=0.10cm,miter
    limit=4.00] (7.6932,-4.7826) .. controls (7.6971, -4.7844) and (7.7892,
    -4.8189) .. (7.7892, -4.8189);
\draw[very thick]
    (7.2938,-4.7005) .. controls (7.5968, -4.6887) and (7.8846, -4.9136) ..
    (8.3176, -4.9246) .. controls (8.7506, -4.9357) and (8.9597, -4.8027) ..
    (9.2828, -4.8048);
\path[draw=white,line cap=butt,line join=miter,line width=0.10cm,miter
    limit=4.00] (7.9777,-4.3564) .. controls (7.9797, -4.3600) and (8.0483,
    -4.4528) .. (8.0483, -4.4528);
\path[draw=white,line cap=butt,line join=miter,line width=0.10cm,miter
    limit=4.00] (7.7004,-3.9595) .. controls (7.7023, -3.9632) and (7.7710,
    -4.0559) .. (7.7710, -4.0559);
\path[draw=white,line cap=butt,line join=miter,line width=0.10cm,miter
    limit=4.00] (8.3482,-4.8856) .. controls (8.3501, -4.8892) and (8.4187,
    -4.9820) .. (8.4187, -4.9820);
\draw[very thick]
    (7.5381,-3.7256) .. controls (7.5511, -3.7439) and (8.4641, -5.0485) ..
    (8.4641, -5.0485);
\end{tikzpicture}
\end{minipage}
\begin{minipage}[c]{4cm}
\begin{tikzpicture}[scale = 1.2]
\draw[very thick]
    (10.2141,-3.9705) .. controls (10.1972, -4.0027) and (9.4117, -5.1168) ..
    (9.4117, -5.1168);
\path[draw=red,line cap=butt,line join=miter,line width=0.010cm]
    (7.5939,-4.5517) -- (10.3811,-4.5445);
\draw[very thick]
    (7.8494,-4.1475) .. controls (8.2823, -4.1364) and (8.3509, -3.8521) ..
    (9.2507, -4.1441);
\draw[very thick]
    (8.5009,-4.5449) .. controls (8.8336, -4.5444) and (8.8482, -4.1373) ..
    (9.2487, -4.1434);
\draw[very thick]
    (8.5023,-4.5454) .. controls (8.8350, -4.5459) and (8.8363, -4.9511) ..
    (9.2368, -4.9451);
\path[draw=white,line cap=butt,line join=miter,line width=0.10cm,miter
    limit=4.00] (9.7814,-4.5012) .. controls (9.7834, -4.5048) and (9.8520,
    -4.5976) .. (9.8520, -4.5976);
\draw[very thick]
    (9.4036,-3.9670) .. controls (9.4166, -3.9853) and (10.2277, -5.1323) ..
    (10.2277, -5.1323);
\draw[very thick]
    (7.8335,-4.9357) .. controls (8.2664, -4.9467) and (8.3345, -5.2371) ..
    (9.2343, -4.9450);
\end{tikzpicture}
\end{minipage} \caption{The transvergent front diagrams $Z(M\F,Mc)$ (left) and $Z(\F,c)$ (right) for $M=CX$ and $Z=S$}
    \label{f:Z-M1}
\end{figure}
Similarly, if $Z=T$, then $Z(\F,c)$ is obtained from $Z(M\F,Mc)$ by applying one $CX$-move and one $XX$-move, see~Figure~\ref{f:Z-M2}.  
\begin{figure}[ht]
    \centering
\begin{minipage}[c]{4.5cm}
\begin{tikzpicture}[scale = 1.2]
\draw[very thick]
    (6.9857,-5.3793) .. controls (6.9689, -5.4116) and (6.0597, -6.7022) ..
    (6.0597, -6.7022);

  \path[draw=red,line cap=butt,line join=miter,line width=0.010cm]
    (5.6320,-6.0764) -- (9.2306,-6.0764);

  \path[draw=white,line cap=butt,line join=miter,line width=0.10cm,miter
    limit=4.00] (6.8209,-5.5521) .. controls (6.8306, -5.5529) and (6.9234,
    -5.5531) .. (6.9234, -5.5531);

\draw[very thick]
    (5.7951,-5.7762) .. controls (6.0982, -5.7880) and (6.3860, -5.5631) ..
    (6.8190, -5.5521);

  \path[draw=white,line cap=butt,line join=miter,line width=0.10cm,miter
    limit=4.00] (6.1808,-6.4499) .. controls (6.1847, -6.4518) and (6.2768,
    -6.4863) .. (6.2768, -6.4863);

\draw[very thick]
    (5.7814,-6.3678) .. controls (6.0845, -6.3560) and (6.3723, -6.5809) ..
    (6.8053, -6.5920);

  \path[draw=white,line cap=butt,line join=miter,line width=0.10cm,miter
    limit=4.00] (6.4654,-6.0238) .. controls (6.4673, -6.0274) and (6.5360,
    -6.1202) .. (6.5360, -6.1202);

  \path[draw=white,line cap=butt,line join=miter,line width=0.10cm,miter
    limit=4.00] (6.1880,-5.6269) .. controls (6.1900, -5.6305) and (6.2586,
    -5.7233) .. (6.2586, -5.7233);

\draw[very thick]
    (6.7989,-6.5922) .. controls (7.5970, -6.5961) and (7.7975, -5.4297) ..
    (8.2030, -5.4227) .. controls (8.6005, -5.4158) and (8.6166, -6.0761) ..
    (8.9850, -6.0795);

  \path[draw=white,line cap=butt,line join=miter,line width=0.10cm,miter
    limit=4.00] (6.8358,-6.5529) .. controls (6.8378, -6.5565) and (6.9064,
    -6.6493) .. (6.9064, -6.6493);

  \path[draw=white,line cap=butt,line join=miter,line width=0.10cm]
    (7.6343,-6.1246) -- (7.5698,-6.0324);

\draw[very thick]
    (6.8136,-5.5516) .. controls (7.5838, -5.5406) and (7.7975, -6.7270) ..
    (8.2030, -6.7340) .. controls (8.6005, -6.7408) and (8.6166, -6.0806) ..
    (8.9850, -6.0772);

\draw[very thick]
    (6.0257,-5.3930) .. controls (6.0387, -5.4113) and (6.9517, -6.7159) ..
    (6.9517, -6.7159);
\end{tikzpicture}
\end{minipage}
\hspace{1cm}
\begin{minipage}[c]{4.5cm}
\begin{tikzpicture}[scale=1.2]
 \draw[very thick]
    (5.5577,-2.2236) .. controls (6.3559, -2.2275) and (6.3788, -1.1767) ..
    (6.7843, -1.1697) .. controls (7.1818, -1.1629) and (7.2282, -1.7114) ..
    (7.5967, -1.7148);

  \path[draw=red,line cap=butt,line join=miter,line width=0.010cm]
    (5.4281,-1.7102) -- (8.7596,-1.7160);

  \path[draw=white,line cap=butt,line join=miter,line width=0.10cm]
    (6.3411,-1.7819) -- (6.2761,-1.6702);

 \draw[very thick]
    (8.5101,-1.1428) .. controls (8.4933, -1.1750) and (7.7077, -2.2891) ..
    (7.7077, -2.2891);

  \path[draw=white,line cap=butt,line join=miter,line width=0.10cm,miter
    limit=4.00] (8.0774,-1.6735) .. controls (8.0794, -1.6771) and (8.1480,
    -1.7699) .. (8.1480, -1.7699);

 \draw[very thick]
    (7.6996,-1.1393) .. controls (7.7127, -1.1576) and (8.5237, -2.3046) ..
    (8.5237, -2.3046);

 \draw[very thick]
    (5.5595,-1.1782) .. controls (6.3311, -1.1810) and (6.3687, -2.2553) ..
    (6.7742, -2.2622) .. controls (7.1717, -2.2691) and (7.2282, -1.7159) ..
    (7.5967, -1.7125);
\end{tikzpicture}
\end{minipage} \caption{The transvergent front diagrams $Z(M\F,Mc)$ (left) and $Z(\F,c)$ (right) for $M=CX$ and $Z=T$}
\label{f:Z-M2}
\end{figure}
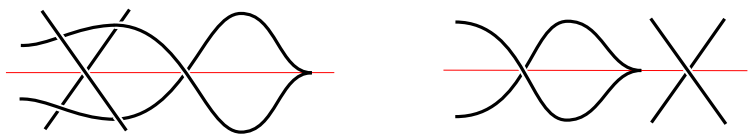
Figures~\ref{f:Z-M1} and~\ref{f:Z-M2} illustrate the cases when $c$ is a right-pointing cusp. 
The cases when $c$ is left-pointing are completely analogous, and the corresponding equivalence can be obtained from those in Figures~\ref{f:Z-M1} and~\ref{f:Z-M2} by a $\pi$ rotation of the plane. 
\end{proof}

Given $Z\in\{S, T\}$, we denote by $\overline{Z}$ the operation obtained from $Z$ by swapping the letters $S$ and $T$, so that $\overline{S} = T$ and $\overline{T} = S$.

\begin{lem}\label{l:Z-CR}
Let $\F$ be a transvergent front diagram, $c\in\F$ a cusp on the $x$-axis, and let $Z\in\{S, T\}$. 
Then, the diagrams $Z(CR\F, CRc)$ and $\overline{Z}(\F,c)$ represent equivalent strongly invertible links. 
\end{lem}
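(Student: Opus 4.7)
The approach is to verify the lemma case by case via a direct diagrammatic argument, exhibiting in each instance a finite sequence of moves from the list in Theorem~\ref{t:SILRT} that transforms $Z(CR\F, CRc)$ into $\overline{Z}(\F, c)$, closely mirroring the strategy used in the proof of Lemma~\ref{l:Z-M}.

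First, I would record the driving observation: the $CR$ move flips the standardness of the cusp $c$, turning a standard cusp into a non-standard one and vice versa. By Definition~\ref{d:linkstabs}, this means that $Z(CR\F, CRc)$ and $\overline{Z}(\F, c)$ are produced by applying the \emph{same} of the two local modifications, either the one in Figure~\ref{f:S-stabs} or the one in Figure~\ref{f:T-stabs}, to the respective diagrams. In the resulting pair of diagrams the stabilization loop therefore has the same local shape; only the surrounding arcs, which involve the crossings introduced by $CR$, differ.

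Next, I would invoke the $\pi$-rotation symmetry of the move list in Theorem~\ref{t:SILRT} to reduce to two cases, chosen for concreteness as $c$ being a left-pointing left cusp with either $Z = S$ or $Z = T$. In each case, the plan is to draw the local pictures of $Z(CR\F, CRc)$ and of $\overline{Z}(\F, c)$ in a neighborhood containing both the stabilization loop and the $CR$-configuration, and to produce a connecting sequence: apply a pair of symmetric Legendrian Reidemeister moves of type R1 near the loop, and then use one or two of the moves $CX$, $XX$ from Figure~\ref{f:moves} to push the extra crossings produced by $CR$ through the loop and out of the neighborhood, leaving the front diagram $\overline{Z}(\F, c)$.

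The main obstacle will be the diagrammatic bookkeeping: several crossings appear on the $x$-axis after the $CR$ move, and one must check that they can be absorbed by the stabilization loop precisely when $S$ is swapped with $T$. No new idea beyond those already present in the proof of Lemma~\ref{l:Z-M} is required; the difficulty is purely organizational, consisting of a careful case-by-case verification analogous to that carried out in Figures~\ref{f:Z-M1} and~\ref{f:Z-M2}.
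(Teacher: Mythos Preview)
Your overall plan—reduce via the $\pi$-rotation symmetry and then exhibit explicit sequences of moves from Theorem~\ref{t:SILRT} in each case—is the same as the paper's, and it is the right approach. However, two points in your sketch are off.

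First, your ``driving observation'' is circular and, as stated, false. You invoke Definition~\ref{d:linkstabs} to argue that $Z(CR\F,CRc)$ and $\overline{Z}(\F,c)$ are obtained by applying the \emph{same} local modification from Figures~\ref{f:S-stabs}/\ref{f:T-stabs}. But Lemma~\ref{l:Z-CR} is precisely one of the two lemmas used to show that Definition~\ref{d:linkstabs} is well-posed, so it cannot be invoked here. Moreover, in the lemma $Z\in\{S,T\}$ denotes the \emph{front-level} operations of Definition~\ref{d:frontstabs}, and $S$ and $T$ are genuinely different local modifications regardless of whether the cusp is left- or right-pointing: $S$ inserts a symmetric pair of off-axis cusps, while $T$ inserts a crossing on the axis. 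So $Z(CR\F,CRc)$ and $\overline{Z}(\F,c)$ are \emph{not} produced by the same local figure; establishing their equivalence is the actual content of the lemma, and it must be checked by hand.

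Second, the specific moves you propose (symmetric $R1$'s followed by $CX$ and/or $XX$) are not what turns out to work. In the paper's proof, with $c$ right-pointing, the case $Z=T$ is handled by a single $CC$ move (see Figure~\ref{f:mu3}), while the case $Z=S$ uses a $CX$ move together with a symmetric $R2$ and a symmetric $R1$ (see Figure~\ref{f:mu2}); the move $XX$ is never needed. The $Z=T$ case is thus considerably simpler than you anticipate, and your move list would not produce it. What your plan is missing is the actual diagrammatic check; once you draw the two sides carefully you will see the $CC$ move immediately in one case.
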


\begin{proof}
When $c$ is right-pointing, Figure~\ref{f:mu2} shows that $S(CR\F, CRc)$ and $T(\F,c)$ are obtained from each other by a $CX$ move, a symmetric $R2$-move and a symmetric $R1$-move. 
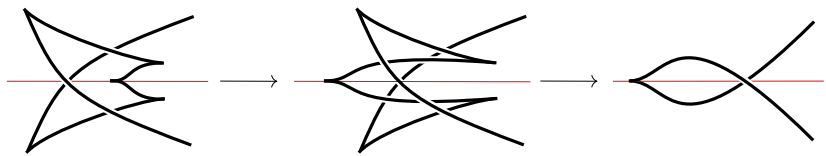
\begin{figure}[ht]
\centering
\begin{tikzpicture}[scale = 1.8]
\draw[very thick] 
(18.5351,-0.5535) .. controls (17.5058, -1.5876) and (17.4880, -0.9886) .. (17.1874, -0.9904);

\draw[very thick] 
(15.2194,-1.5044) .. controls (15.4547, -0.9530) and (15.5407, -0.8475) .. (16.4316, -0.5144);

\draw[very thick] 
(12.7898,-1.5061) .. controls (13.0251, -0.9547) and (13.1112, -0.8492) .. (14.0020, -0.5161);

\path[draw=red,line cap=butt,line join=miter,line width=0.010cm] (12.6369,-0.9930) -- (14.1077,-0.9942);

\draw[very thick]
(13.7844,-0.8592) .. controls (13.5121, -0.8521) and (13.5548, -1.0065) .. (13.3902, -0.9891);

\draw[very thick]
(13.7870,-1.1250) .. controls (13.5148, -1.1320) and (13.5560, -0.9736) .. (13.3915, -0.9910);

\path[draw=white,line cap=butt,line join=miter,line width=0.10cm] (13.3253,-0.7432) -- (13.4555,-0.7892);

\draw[very thick] 
(13.7735,-0.8579) .. controls (13.5818, -0.8504) and (12.8991, -0.6222) .. (12.7609, -0.4593);

\path[draw=white,line cap=butt,line join=miter,line width=0.10cm] (13.0275,-0.9441) -- (13.1108,-1.0363);

\draw[very thick]
(13.7914,-1.1217) .. controls (13.5996, -1.1292) and (12.9169, -1.3574) .. (12.7787, -1.5203);

\path[draw=white,line cap=butt,line join=miter,line width=0.10cm] (13.3252,-1.1788) -- (13.4600,-1.2534);

\draw[very thick] 
(12.7720,-0.4735) .. controls (13.0073, -1.0249) and (13.0933, -1.1304) .. (13.9842, -1.4635);

\path[draw=white,line cap=butt,line join=miter,line width=0.10cm] (15.6192,-0.8334) -- (15.7931,-0.8377);

\draw[very thick] 
(16.2140,-0.8575) .. controls (15.0198, -0.7628) and (15.2537, -0.9928) .. (14.9532, -0.9910);

\path[draw=white,line cap=butt,line join=miter,line width=0.10cm] (15.7549,-0.7415) -- (15.8851,-0.7875);

\path[draw=white,line cap=butt,line join=miter,line width=0.10cm] (15.3702,-0.8211) -- (15.4188,-0.8970);

\path[draw=white,line cap=butt,line join=miter,line width=0.10cm] (15.3465,-1.1147) -- (15.4616,-1.1335);

\draw[very thick]
(16.2031,-0.8562) .. controls (16.0114, -0.8487) and (15.3287, -0.6205) .. (15.1904, -0.4576);

\path[draw=white,line cap=butt,line join=miter,line width=0.10cm] (15.4570,-0.9424) -- (15.5403,-1.0346);

\draw[very thick] 
(16.2209,-1.1200) .. controls (16.0292, -1.1275) and (15.3465, -1.3557) .. (15.2083, -1.5186);

\path[draw=red,line cap=butt,line join=miter,line width=0.010cm] (14.7360,-0.9935) -- (16.4618,-0.9969);

\draw[very thick] 
(16.2159,-1.1195) .. controls (15.0217, -1.2142) and (15.2563, -0.9888) .. (14.9557, -0.9906);

\path[draw=white,line cap=butt,line join=miter,line width=0.10cm] (15.6251,-1.0999) .. controls (15.7166, -1.1568) and (15.8061, -1.2090) .. (15.8905, -1.2491);

\draw[very thick]
(15.2016,-0.4718) .. controls (15.4369, -1.0232) and (15.5229, -1.1287) .. (16.4138, -1.4618);

\path[draw=red,line cap=butt,line join=miter,line width=0.010cm] (17.0654,-0.9933) -- (18.6058,-0.9919);

\path[draw=white,line cap=butt,line join=miter,line width=0.10cm] (17.9716,-0.9442) -- (18.0874,-1.0326);

\draw[very thick]
(18.5285,-1.4279) .. controls (17.4992, -0.3938) and (17.4813, -0.9929) .. (17.1808, -0.9911);

\draw[->,line width=0.010cm] 
(14.1955,-0.9920) -- (14.6122,-0.9934);
\draw[->,line width=0.010cm] 
(16.5347,-0.9916) -- (16.9515,-0.9930);
\end{tikzpicture} \caption{Equivalence of $S_R(CR\F, CRc)$ and $T(\F,c)$.}
\label{f:mu2}
\end{figure}
Similarly, Figure~\ref{f:mu3} shows that $T(CR\F, CRc)$ and $S(\F,c)$ are obtained from each other by a $CC$ move.
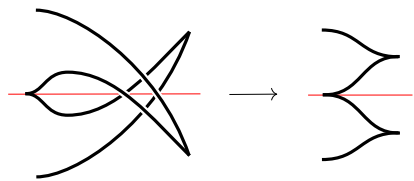
\begin{figure}[ht]
\centering
\begin{tikzpicture}[scale = 1.3]
  \path[draw=red,line cap=butt,line join=miter,line width=0.010cm]
    (15.2013,-3.9734) -- (17.1463,-3.9701);
  \draw[very thick]
    (17.0356,-3.3365) .. controls (16.7181, -4.2625) and (15.8786, -4.8184) ..
    (15.4860, -4.8169);

  \path[draw=white,line cap=butt,line join=miter,line width=0.10cm]
    (16.5398,-4.1027) -- (16.6054,-4.1624);

  \path[draw=white,line cap=butt,line join=miter,line width=0.10cm]
    (16.6821,-3.9353) -- (16.7296,-4.0035);

  \draw[very thick]
    (17.0356,-3.3365) .. controls (16.6652, -3.7069) and (16.2902, -4.1867) ..
    (15.8046, -4.1879) .. controls (15.5681, -4.1885) and (15.5608, -3.9656) ..
    (15.3740, -3.9682);

  \path[draw=white,line cap=butt,line join=miter,line width=0.10cm]
    (16.3354,-3.9386) -- (16.4145,-3.9964);

  \path[draw=white,line cap=butt,line join=miter,line width=0.10cm]
    (16.5386,-3.7566) -- (16.6012,-3.8328);

  \path[draw=red,line cap=butt,line join=miter,line width=0.010cm]
    (18.2354,-3.9826) -- (19.2914,-3.9821);

  \draw[very thick]
    (19.1631,-3.5895) .. controls (18.7739, -3.5782) and (18.7867, -3.9921) ..
    (18.3832, -3.9796);

  \draw[very thick]
    (19.1491,-3.5921) .. controls (18.7551, -3.5924) and (18.7666, -3.3174) ..
    (18.3740, -3.3189);

  \draw[very thick]
    (19.1646,-4.3694) .. controls (18.7754, -4.3807) and (18.7882, -3.9668) ..
    (18.3847, -3.9793);

  \draw[very thick]
    (19.1506,-4.3668) .. controls (18.7566, -4.3665) and (18.7681, -4.6416) ..
    (18.3755, -4.6401);

  \draw[->,line width=0.010cm]
    (17.4389,-3.9764) -- (17.9222,-3.9729);

  \draw[very thick]
    (17.0311,-4.5966) .. controls (16.7136, -3.6705) and (15.8741, -3.1147) ..
    (15.4815, -3.1162);

  \draw[very thick]
    (17.0367,-4.6002) .. controls (16.6663, -4.2298) and (16.2913, -3.7500) ..
    (15.8058, -3.7488) .. controls (15.5692, -3.7482) and (15.5619, -3.9711) ..
    (15.3752, -3.9685);
\end{tikzpicture} \caption{Equivalence between $T(CR\F, CRc)$ and $S(\F,c)$.}
\label{f:mu3}
\end{figure}
When $c$ is left-pointing the argument is similar. 
The corresponding diagrams are obtained from those of Figures~\ref{f:mu2} and~\ref{f:mu3} by a $\pi$-rotation.
\end{proof}

\begin{prop}\label{p:linkstabwellposed}
Definition~\ref{d:linkstabs} is well-posed.
\end{prop}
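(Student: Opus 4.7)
The strategy is to apply Theorem~\ref{t:SILRT} to reduce well-posedness to invariance of the stabilization under each elementary move relating two transvergent front diagrams of the same pair $(\L, p)$. Given two such diagrams $\F_0, \F_1$, the theorem produces a finite connecting sequence of moves, and it suffices to verify that applying the construction at the distinguished cusp before and after a single move yields equivalent strongly invertible Legendrian links. The general statement then follows by induction on the length of the sequence.

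For an equivariant planar isotopy, the distinguished cusp $c$ on the $x$-axis is transported to a corresponding cusp in the new diagram, and since the stabilization is a purely local modification in an arbitrarily small neighborhood of $c$, the outputs coincide up to equivariant planar isotopy. For symmetric pairs of $LR$-moves, by general position we may assume the supporting discs are disjoint from a neighborhood of $c$, so the stabilization commutes with the move. This reduces the problem to verifying invariance under the moves of Figure~\ref{f:moves} and their $\pi$-rotations that actually affect a neighborhood of~$c$, which is precisely the content of Lemmas~\ref{l:Z-M} and~\ref{l:Z-CR}.

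For a move $M \neq CR$ that affects $c$, Lemma~\ref{l:Z-M} supplies the equivalence between $Z(M\F, Mc)$ and $Z(\F, c)$ for $Z \in \{S, T\}$; crucially, the moves $CX$, $XX$, $CC$ preserve the type (standard or non-standard) of $c$, so Definition~\ref{d:linkstabs} assigns the same operation $Z$ to both diagrams. For $M = CR$, the move instead exchanges the type of $c$: a standard cusp becomes non-standard and conversely. Accordingly, the $S$-stabilization of $(\L, p)$ is computed as $S(\F, c)$ through one diagram and as $T(CR\F, CRc)$ through the other, and these are equivalent by Lemma~\ref{l:Z-CR}; the analogous check handles $T$-stabilization. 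The heart of the argument, and indeed the reason the two operations $S$ and $T$ in Definition~\ref{d:frontstabs} must be swapped between standard and non-standard cusps in Definition~\ref{d:linkstabs}, is this compatibility between the cusp-type swap induced by $CR$ and the $Z \leftrightarrow \overline{Z}$ swap appearing in Lemma~\ref{l:Z-CR}. With this bookkeeping verified, invariance under a single move is established in every case, completing the proof.
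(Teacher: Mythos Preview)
Your proof is correct and follows essentially the same approach as the paper: both reduce via Theorem~\ref{t:SILRT} to a sequence of elementary moves and then invoke Lemmas~\ref{l:Z-M} and~\ref{l:Z-CR}, using that the non-$CR$ moves preserve the standard/non-standard type of the distinguished cusp while $CR$ swaps it. The only cosmetic difference is that you argue inductively move by move, whereas the paper packages the same bookkeeping as a parity count on the total number of $CR$ moves in the connecting sequence.
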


\begin{proof}
Suppose that $(\F,c)$ and $(\F',c')$ are transvergent front diagrams with distinguished cusps on the $x$-axis. 
Then, $(\F,c)$ and $(\F',c')$ are connected by a sequence $\{M_i\}_{i=0}^k$ of the moves of Theorem~\ref{t:SILRT}. 
In other words, there is a sequence $(\F_i,c_i)$, $i=0,\ldots, k$, of transvergent front diagrams with distinguished cusps on the $x$-axis such that $(\F_0,c_0)=(\F,c)$, $(\F_k,c_k) = (\F',c')$ and $(\F_{i+1}, c_{i+1}) = (M_i\F_i, M_ic_i)$ for each $i=0,\ldots, k$. 
Assume first that either both $c$ and~$c'$ are in standard form or neither of them is. 
We need to check that, for any $Z\in\{S,T\}$, the strongly invertible Legendrian link given by~$Z(\F,c)$ is equivalent to the strongly invertible Legendrian link given by~$Z(\F',c')$.
Since $c$ is in standard form if and only if so is $c'$, the total number of $CR$ moves among the $M_i$'s 
must be even.  
Using this together with the fact that the transformation $Z\to\overline{Z}$ is an involution and applying Lemmas~\ref{l:Z-M} and~\ref{l:Z-CR}, we conclude that $Z(\F,c)$ and $Z(\F',c')$ are equivalent. 
Now suppose that exactly one among $c$ and $c'$ is in standard form.
In this case we have to check that the strongly invertible Legendrian link given by  $Z(\F,c)$ is equivalent to the strongly invertible Legendrian link given by 
$\overline{Z}(\F',c')$. 
Moreover, the total number of $CR$ moves among the $M_i$'s is odd. 
The conclusion follows from Lemmas~\ref{l:Z-M} and~\ref{l:Z-CR} as in the previous case. 
\end{proof}

With Definition~\ref{d:linkstabs} in place, we can now observe that the strongly invertible Legendrian unknots given in Figure~\ref{f:unknots} are nothing but $S(\U,c_R)$ and $T(\U,c_R)$, where $\U$ is given by the diagram of Figure~\ref{f:unknot} and $c_R$ is the fixed point corresponding to the right cusp. 
As we observed in Section~\ref{s:intro}, the Thurston-Bennequin numbers of $\U_S:=S(\U,c_R)$ and $\U_T:=T(\U,c_R)$ are both equal to $-2$. 
Thus, by the classification of Legendrian unknots \cite{EF09}, $\U_S$ and $\U_T$ are equivalent as unoriented Legendrian knots. 
On the other hand, the following corollary of Theorem~\ref{t:SILRT} implies that $\U_S$ and $\U_T$ are not equivalent as strongly invertible Legendrian knots. 
\begin{cor}\label{c:notequ}
Two transvergent front diagrams which differ only in a disk as in Figure~\ref{f:notequ} represent strongly invertible Legendrian links not equivalent to each other.
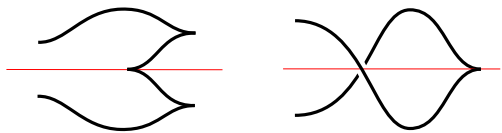
\begin{figure}[ht]
\centering
\begin{minipage}[c]{3.5cm}
\begin{tikzpicture}[scale = 1.2]
\path[draw=red,line cap=butt,line join=miter,line width=0.010cm] (7.2231,-4.4067) -- (9.5914,-4.4095);
\draw[very thick] 
(7.5714,-4.1061) .. controls (7.8745, -4.1179) and (8.0497, -3.7507) .. (8.4827, -3.7397) .. controls (8.9157, -3.7286) and (8.9734, -4.0064) .. (9.2965, -4.0042);
\draw[very thick] 
(8.5467,-4.4049) .. controls (8.8794, -4.4044) and (8.8940, -3.9974) .. (9.2945, -4.0035);
\draw[very thick] 
(8.5481,-4.4055) .. controls (8.8808, -4.4060) and (8.8821, -4.8112) .. (9.2826, -4.8051);
\draw[very thick]
(7.5642,-4.7031) .. controls (7.8672, -4.6913) and (8.0425, -5.0585) .. (8.4755, -5.0695) .. controls (8.9085, -5.0806) and (8.9662, -4.8029) .. (9.2892, -4.8050);
\end{tikzpicture}
\end{minipage}
\begin{minipage}[c]{4cm}
\begin{tikzpicture}[scale=1.2]
\draw[very thick]
(5.5448,-2.0875) .. controls (6.3429, -2.0913) and (6.3962, -0.9288) .. (6.8018, -0.9218) .. controls (7.1993, -0.9150) and (7.2153, -1.5752) .. (7.5838, -1.5786);
\path[draw=white,line cap=butt,line join=miter,line width=0.15cm] (6.4113,-1.8412) -- (6.1383,-1.3059);
\path[draw=white,line cap=butt,line join=miter,line width=0.10cm] (6.5362,-1.5812) -- (6.0054,-1.5810);
\path[draw=red,line cap=butt,line join=miter,line width=0.010cm] (5.4152,-1.5741) -- (7.7956,-1.5752);
\draw[very thick] 
(5.5465,-1.0420) .. controls (6.3182, -1.0449) and (6.3962, -2.2262) .. (6.8018, -2.2331) .. controls (7.1993, -2.2400) and (7.2153, -1.5797) .. (7.5838, -1.5763);
\end{tikzpicture}
\end{minipage} \caption{Not equivalent strongly invertible Legendrian links}
\label{f:notequ}
\end{figure}
In particular, if $(\K,p)$ is a strongly invertible Legendrian knot with a distinguished fixed point $p\in\K$, then $S(\K,p)$ and~$T(\K,p)$ are not equivalent.
\end{cor}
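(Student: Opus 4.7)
The plan is to argue by contradiction via Theorem~\ref{t:SILRT}. If the two strongly invertible Legendrian links represented by the diagrams of Figure~\ref{f:notequ} were equivalent, then those transvergent fronts would be related by a finite sequence of equivariant planar isotopies, symmetric pairs of $LR$-moves, and the moves of Figure~\ref{f:moves} (together with their $\pi$-rotations). I will construct a $\Z/2$-valued invariant of transvergent front diagrams that is preserved by all these moves but takes different values on the two diagrams.

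Given a transvergent front diagram $\F$, let $c_0(\F)$ be the number of crossings of $\F$ lying on the $x$-axis and $k_0(\F)$ the number of cusps of $\F$ lying on the $x$-axis. The proposed invariant is
\[
\mu(\F):=c_0(\F)+k_0(\F)\pmod{2}.
\]
The verification proceeds move by move. Equivariant planar isotopies clearly preserve both $c_0$ and $k_0$, hence $\mu$. A symmetric pair of $LR$-moves is supported in a pair of disjoint off-axis disks and so affects neither on-axis count. For each of the moves $CX$, $XX$, $CC$, $CR$ in Figure~\ref{f:moves} (together with their $\pi$-rotations), inspection of the local pictures will show that $c_0+k_0$ changes by an even integer: for example, in $CX$ both sides carry one on-axis crossing and one on-axis cusp, while in $CR$ one verifies by carefully counting on-axis features (including the on-axis crossing arising from the $\tau$-paired strands meeting on the axis) that the sum is equal on the two sides modulo $2$.

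Once invariance is established, a direct computation gives $\mu=1$ on the left-hand diagram of Figure~\ref{f:notequ} (no on-axis crossings and a single on-axis cusp coming from the $S$-stabilization) and $\mu=0$ on the right-hand diagram (one on-axis crossing and one on-axis cusp coming from the $T$-stabilization), yielding the desired contradiction and proving the first claim. The \emph{In particular} part then follows immediately: by Definition~\ref{d:linkstabs}, transvergent fronts representing $S(\K,p)$ and $T(\K,p)$ differ only in a small neighborhood of the cusp corresponding to $p$, with local pictures equal to those of Figure~\ref{f:notequ} (or their $\pi$-rotations, when this cusp is not in standard form).

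The main obstacle is the careful case-by-case verification of invariance under the moves of Figure~\ref{f:moves}, especially the $CR$ move, where on-axis crossings and on-axis cusps interact in a nontrivial way. Should the naive parity invariant fail for some move, I would refine it by tracking additional local data -- for instance, the standard vs.\ non-standard type of each on-axis cusp, or signs assigned to each on-axis crossing -- until a $\Z/2$-invariant with both required properties is produced.
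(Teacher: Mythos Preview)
Your proposed invariant $\mu(\F)=c_0(\F)+k_0(\F)\pmod 2$ is \emph{not} preserved by the $CR$-move, so the argument breaks down precisely at the step you flagged as delicate. On the right-hand side of $CR$ (Figure~\ref{f:moves}) there is a single on-axis cusp and no on-axis crossing, so $c_0+k_0=1$. On the left-hand side the strand winds through two off-axis cusps and one on-axis cusp, and the two long arcs (the one going from the lower off-axis cusp to the upper right and its $\tau$-mirror) meet in exactly one on-axis crossing; the remaining crossings come in an off-axis symmetric pair. Hence $c_0+k_0=1+1=2$ on that side, and $\mu$ changes by~$1$. (This is not an accident: $CR$ is exactly the move that toggles the ``standard'' status of an on-axis cusp, which is why Definition~\ref{d:linkstabs} swaps $S$ and $T$ across a $CR$; your parity is essentially detecting that toggle.) The suggested fallback of ``refining by extra local data'' is too vague to count as a proof; any refinement that is both $CR$-invariant and separates the two pictures would need a genuinely new idea.

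The paper's argument avoids inventing a new combinatorial invariant and instead uses an existing one: the rotation number. One orients $\F$ so that the relevant on-axis cusp points upward and observes that every move in Theorem~\ref{t:SILRT} transports this orientation to one making the corresponding cusp of $\F'$ point upward (the moves are realized by Legendrian isotopies, hence carry orientations coherently). A direct local count then gives $\rot(\vec{\L})-\rot(\vec{\L'})=2$, contradicting invariance of the rotation number under oriented Legendrian isotopy. This is both shorter and sidesteps the $CR$ issue entirely.
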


\begin{proof} 
Let $\F$ and $\F'$ the transvergent front diagrams differing only in a disk as in Figure~\ref{f:notequ}. 
Suppose by contradiction that the corresponding strongly invertible links $\L$ and $\L'$ are equivalent. 
Then, the fronts $\F$ and $\F'$ should be connected by a sequence of moves as in Theorem~\ref{t:SILRT}. 
Fix an orientation $\mathcal O$ on $\F$ which makes the relevant cusp point upwards. 
Then, a moment's reflection shows that the moves of Theorem~\ref{t:SILRT} carry $\mathcal O$ to an orientation of $\F'$ which makes the corresponding cusp point upwards. 
But a simple calculation shows that, denoting by $\Vec{\L}$ and $\Vec{\L'}$, respectively, the oriented versions of $\L$ and $\L'$, their rotation numbers satisfy $\rot(\Vec{\L}) - \rot(\Vec{\L'}) = 2$, contradicting the fact that the rotation number is invariant under oriented Legendrian isotopy~\cite{Et05}. 
Therefore $\L$ and $\L'$ are not equivalent.  
To prove the last part of the statement, let $(\F,c)$ be a transvergent front diagram of $\K$ with distinguished cusp $c\in\F$ corresponding to the fixed point $p$.  
When $c$ is right-pointing, the conclusion follows immediately by applying the argument above in a neighborhood of $c$. 
When $c$ is left-pointing, the conclusion is obtained using the same argument to the pictures of Figure~\ref{f:notequ} rotated by $\pi$.  
\end{proof}

\section{The proof of Theorem~\ref{t:stabs}}\label{s:stabequiv}

Up to equivalence, we may assume that $\L_0:=\L$ and $\L_1:=\L'$ have transvergent front diagrams in standard form $\F_0$ and $\F_1$. 
We can turn $\F_i$, $i=0,1$, into a transvergent diagram $D_i$ by smoothing all cusps. 
Let $L_0$ and $L_1$ be the strongly invertible links such that $\pi(L_i) = D_i$, $i=0,1$. 
By assumption, there exists a smooth, equivariant isotopy $L_t$, $t\in [0,1]$ from $L_0$ to $L_1$. 
Our argument will be an adaptation of the argument from~\cite[Theorem~4.4]{FT97}. 
Call a value of $t$ {\em generic} if (i) the projection $\pi(L_t)$ is regular, (ii) its self-intersections are transverse double points, (iii) neither tangent line at each self-intersection point is vertical and (iv) no tangent line at an inflection point is vertical. 
When $t$ is generic, the projection $\pi(L_t)$ is a transvergent diagram of $L_t$ and we denote it by $D_t$.  
By general position, we may assume that there are finitely many non-generic values, say~$t_1,\ldots,t_k\in (0,1)$, that we call the {\em singular values}. 
By the Lobb-Watson equivariant Reidemeister theorem~\cite[Theorem~2.3]{LW21}, for some small $\ep>0$ and each singular value $t_i$, the diagrams $D_{t_i-\ep}$ and $D_{t_i+\ep}$ differ by one of the equivariant Reidemeister moves given in~\cite[Figure~9]{LW21} or by equivariant planar isotopy. 
We warn the reader that Lobb and Watson draw their pictures using a vertical axis of symmetry, while we always represent the $x$-axis as horizontal.  
In the notation of~\cite[Figure~9]{LW21}, (i) is violated if and only if $D_{t_i-\ep}$ and $D_{t_i+\ep}$ differ by a move of type IR1 or R1, (ii) is violated if and only if $D_{t_i-\ep}$ and $D_{t_i+\ep}$ differ by one of the remaining equivariant Reidemeister moves and (iii), (iv) are violated during equivariant planar isotopies. 
We can apply Proposition~\ref{p:legrep} to ``Legendrianize'' $D_0$ and $D_1$ and obtain precisely the transvergent front diagrams in standard form $\F_0$ and $\F_1$.  
Since each $t\in [0,t_1-\ep]$ is regular, the whole family $D_t$, $t\in [0,t_1-\ep]$, can be Legendrianized, therefore the front $\F_0$ extends to a family of trasvergent front diagrams $\F_t$ which lift to strongly invertible Legendrian links $\L_t$ for $t\in [0,t_1-\ep]$. 
This argument applies, more generally, to $D_t$ for
\[
t\in A := [0, t_1 - \ep]\cup [t_1+\ep, t_2-\ep]\cup\cdots\cup [t_k+\ep, 1]\subset [0,1], 
\]
yielding strongly invertible Legendrian links $\L_t$ with trasvergent front diagrams $\F_t$.

Observe that after a number of $S$-stabilizations we can create arbitrarily many symmetric pairs of ``zig-zags'' on the fronts $\F_t$ -- cf.~\cite[Figure~19]{Et05} for the definition of ``zig-zag'' in the non-equivariant setting. 
Moreover, using the moves of Theorem~\ref{t:SILRT} we can equivariantly relocate any symmetric pair of ``zag-zags'' to an arbitrarily chosen place. 
This follows simply by ``symmetrizing'' the proof of \cite[Lemma~4.3]{FT97} as long as the new positions of the symmetric ``zag-zags'' can be reached by moving along arcs of the diagram without crossing the axis. 
To deal with the case when the ``zig-zags'' need to cross the axis, we are going to show that two transvergent fronts differing as in Figure~\ref{f:zigzagmove} represent equivalent strongly invertible links. 
\begin{figure}[ht]
\centering
\begin{tikzpicture}[very thick, scale =.6]
\draw[color = red, very thin] (-2.25,0) -- (2.25,0);
\draw (-2,-2) 
.. controls +(.5,.5) and +(-.5,0) .. (0,-1.5)
.. controls +(-.5,0) and +(.5,0) .. (-1,-.5)
.. controls +(.5,0) and +(-.5,-.5) .. (2,2);
\draw[line width = 3, white] (-2,2) 
.. controls +(.5,-.5) and +(-.5,0) .. (0,1.5)
.. controls +(-.5,0) and +(.5,0) .. (-1,.5)
.. controls +(.5,0) and +(-.5,.5) .. (2,-2);
\draw (-2,2) 
.. controls +(.5,-.5) and +(-.5,0) .. (0,1.5)
.. controls +(-.5,0) and +(.5,0) .. (-1,.5)
.. controls +(.5,0) and +(-.5,.5) .. (2,-2);
\begin{scope}[shift = {+(10,0)}]
\draw[color = red, very thin] (-2.25,0) -- (2.25,0);
\draw (2,2) 
.. controls +(-.5,-.5) and +(.5,0) .. (0,1.5)
.. controls +(.5,0) and +(-.5,0) .. (1,.5)
.. controls +(-.5,0) and +(.5,.5) .. (-2,-2);
\draw[line width = 3, white]  (2,-2) 
.. controls +(-.5,.5) and +(.5,0) .. (0,-1.5)
.. controls +(.5,0) and +(-.5,0) .. (1,-.5)
.. controls +(-.5,0) and +(.5,-.5) .. (-2,2);
\draw (2,-2) 
.. controls +(-.5,.5) and +(.5,0) .. (0,-1.5)
.. controls +(.5,0) and +(-.5,0) .. (1,-.5)
.. controls +(-.5,0) and +(.5,-.5) .. (-2,2);
\end{scope}
\end{tikzpicture} \caption{Symmetric ``zig-zags'' can be moved across the $x$-axis}
\label{f:zigzagmove}
\end{figure}
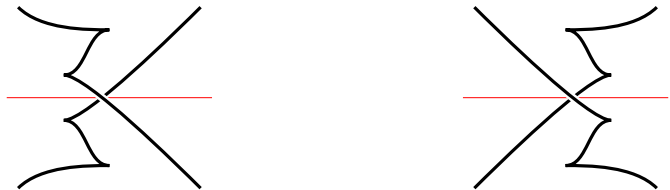
An analogous statement holds for the other type of ``zig-zag''.
The equivalence of the two links is illustrated in Figure~\ref{f:zigzagequiv}, where the first diagram from the left (right, respectively) is obtained from the left (right, respectively) diagram of Figure~\ref{f:zigzagmove} by a symmetric $R2$-move. 
\begin{figure}[ht]
\centering
\begin{tikzpicture}[very thick, scale =.6]
\draw[color = red, very thin] (-2.25,0) -- (2.25,0);
\draw  (-2,-2) 
.. controls +(.5,.5) and +(-.5,0) .. (2,-1.5)
.. controls +(-1,0) and +(1,0) .. (-1,-.5)
.. controls +(1,0) and +(-2,-.5) .. (2,2);
\draw[line width = 3, white] (-2,2) 
.. controls +(.5,-.5) and +(-.5,0) .. (2,1.5)
.. controls +(-1,0) and +(1,0) .. (-1,.5)
.. controls +(1,0) and +(-2,.5) .. (2,-2);
\draw (-2,2) 
.. controls +(.5,-.5) and +(-.5,0) .. (2,1.5)
.. controls +(-1,0) and +(1,0) .. (-1,.5)
.. controls +(1,0) and +(-2,.5) .. (2,-2);

\begin{scope}[shift = {+(6.5,0)}]
\draw[color = red, very thin] (-2.25,0) -- (2.25,0);
\draw  (-2,-2) 
.. controls +(2,.5) and +(-.5,0) .. (2,1.5)
.. controls +(-1,0) and +(1,0) .. (-1,-.5)
.. controls +(1,0) and +(-2,-.5) .. (2,2);
\draw[line width = 3, white] (-2,2) 
.. controls +(2,-.5) and +(-.5,0) .. (2,-1.5)
.. controls +(-1,0) and +(1,0) .. (-1,.5)
.. controls +(1,0) and +(-2,.5) .. (2,-2);
\draw (-2,2) 
.. controls +(2,-.5) and +(-.5,0) .. (2,-1.5)
.. controls +(-1,0) and +(1,0) .. (-1,.5)
.. controls +(1,0) and +(-2,.5) .. (2,-2);
\end{scope}

\begin{scope}[shift = {+(13,0)}]
\draw[color = red, very thin] (-2.25,0) -- (2.25,0);
\draw  (-2,-2) 
.. controls +(2,.5) and +(-.5,0) .. (2,1.5)
.. controls +(-1,0) and +(1,0) .. (-1,.5)
.. controls +(1,0) and +(-2,-.5) .. (2,2);
\draw[line width = 3, white] (-2,2) 
.. controls +(2,-.5) and +(-.5,0) .. (2,-1.5)
.. controls +(-1,0) and +(1,0) .. (-1,-.5)
.. controls +(1,0) and +(-2,.5) .. (2,-2);
\draw (-2,2) 
.. controls +(2,-.5) and +(-.5,0) .. (2,-1.5)
.. controls +(-1,0) and +(1,0) .. (-1,-.5)
.. controls +(1,0) and +(-2,.5) .. (2,-2);
\end{scope}

\draw[latex-latex] (2.5,0) -- (3.25,0) node[above] {$CC$} -- (4,0);
\draw[latex-latex] (9,0) -- (9.75,0) node[above] {$CC$} -- (10.5,0);

\end{tikzpicture}
 \caption{Equivalence of stabilizations}
\label{f:zigzagequiv}
\end{figure}
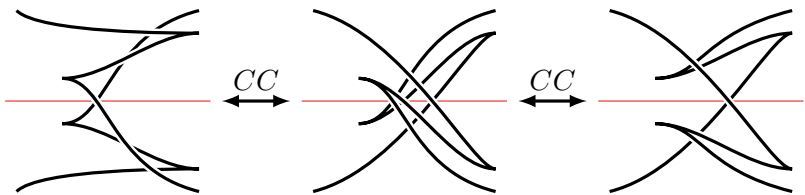
The proof for the other type of zig-zag is similar and left to the reader. 

We now claim that, for each $i = 1,\ldots, k$ the strongly invertible Legendrian knots $\L_{t_i-\ep}$ and~$\L_{t_i+\ep}$ become equivalent after applying sufficiently many $S$-stabilizations. 
If $D_{t_i-\ep}$ and $D_{t_i+\ep}$ differ by one of the moves IR1, IR2 or IR3 the claim follows by ``symmetrizing'' and possibly slightly adapting Pictures R1(a), R2(a) and R3 of~\cite[Figure~22]{FT97}. 
Note that in such adaptations one needs to take into account the choice of configuration in Figure~\ref{f:crossings} made to Legendrianize $D_{t_i+\ep}$. 
If either condition (iii) or condition (iv) is violated at a point of $D_{t_i}$ away from the $x$-axis, we need to ``symmetrize'' the stabilizations provided in the proof of~\cite[Theorem~4.4]{FT97} -- specifically, those of Pictures V1, V2 in~\cite[Figure~21]{FT97} and Pictures V1(a), V2(a) in~\cite[Figure~22]{FT97}.

It remains to deal with the cases when one of the conditions (i)--(iv) is violated at a point of $D_{t_i}$ lying on the $x$-axis. 
We deal first with the violation of (i) or (ii) on the $x$-axis, in which cases $D_{t_i-\ep}$ and $D_{t_i+\ep}$ differ by a Lobb-Watson move of type $R1$, $R2$, $M1$, $M2$ or $M3$. 
Therefore, we may assume that $\F_{t_i-\ep}$ and  $\F_{t_i+\ep}$ differ by a ``Legendrianization'' of one of the five moves. 
For each of the five moves, we need to show that, after sufficiently many equivariant stabilizations, $\F_{t_i-\ep}$ and  $\F_{t_i+\ep}$ differ by a  sequence of moves from the list of Theorem~\ref{t:SILRT}. 

\noindent{\bf move $R1$:}
Up to a $\pi$-rotation of the page, the transvergent front diagram $\F_{t_i-\ep}$ is given, in a disk, by the first picture from the left of Figure~\ref{f:R1}. 
\begin{figure}[ht]
\centering
\begin{minipage}[c]{3.5cm}
\begin{tikzpicture}[scale = 0.5]
\path[draw=red,line cap=butt,line join=miter,line width=0.010cm] (3.8244,-4.6156) -- (8.4735,-4.6184);
\draw[very thick] 
(4.3624,-3.0320) .. controls (4.9500, -3.0172) and (7.2302, -4.6234) .. (7.9370, -4.6116);
\draw[very thick]
(4.3710,-6.1968) .. controls (4.9586, -6.2116) and (7.2387, -4.6054) .. (7.9455, -4.6172);
\end{tikzpicture}
\end{minipage}
\hspace{1cm}
\begin{minipage}[c]{3.5cm}
\begin{tikzpicture}[scale = 1.2]
\draw[very thick]
(8.1035,-4.7173) .. controls (8.2977, -4.4211) and (8.8940, -3.9974) .. (9.2945, -4.0035);
\draw[very thick]
(7.4409,-4.6398) .. controls (7.7349, -4.6442) and (8.0515, -5.0586) .. (8.4845, -5.0697) .. controls (8.9175, -5.0807) and (9.0360, -4.8059) .. (9.2759, -4.8038);
\draw[very thick]
(8.1035,-4.7173) .. controls (8.1433, -4.6020) and (8.7608, -4.4035) .. (9.1612, -4.4096);
\path[draw=white,line cap=butt,line join=miter,line width=0.10cm] (8.2895,-4.4067) -- (8.5237,-4.4098);
\path[draw=red,line cap=butt,line join=miter,line width=0.010cm] (7.2231,-4.4067) -- (9.5914,-4.4095);
\path[draw=white,line cap=butt,line join=miter,line width=0.040cm] (8.3252,-4.3511) -- (8.4686,-4.4614);
\path[draw=white,line cap=butt,line join=miter,line width=0.10cm] (8.4808,-4.4724) -- (8.5768,-4.5416);
\draw[very thick]
(8.0880,-4.0891) .. controls (8.2821, -4.3853) and (8.8785, -4.8090) .. (9.2789, -4.8030);
\path[draw=white,line cap=butt,line join=miter,line width=0.10cm] (8.4516,-4.2830) -- (8.5868,-4.3237);
\draw[very thick] 
(8.0911,-4.0916) .. controls (8.1309, -4.2070) and (8.7605, -4.4148) .. (9.1610, -4.4087);
\draw[very thick] 
(7.4426,-4.1078) .. controls (7.7536, -4.0977) and (8.0497, -3.7507) .. (8.4827, -3.7397) .. controls (8.9157, -3.7286) and (8.9734, -4.0064) .. (9.2965, -4.0042);
\end{tikzpicture}
\end{minipage}
\begin{minipage}[c]{3.5cm}
\begin{tikzpicture}[scale=1.2]
\draw[very thick] 
(5.5448,-2.0875) .. controls (6.3429, -2.0913) and (6.3962, -0.9288) .. (6.8018, -0.9218) .. controls (7.1993, -0.9150) and (7.2153, -1.5752) .. (7.5838, -1.5786);
\path[draw=white,line cap=butt,line join=miter,line width=0.15cm] (6.4113,-1.8412) -- (6.1383,-1.3059);
\path[draw=white,line cap=butt,line join=miter,line width=0.10cm] (6.5362,-1.5812) -- (6.0054,-1.5810);
\path[draw=red,line cap=butt,line join=miter,line width=0.010cm] (5.4152,-1.5741) -- (7.7956,-1.5752);
\draw[very thick]
(5.5465,-1.0420) .. controls (6.3182, -1.0449) and (6.3962, -2.2262) .. (6.8018, -2.2331) .. controls (7.1993, -2.2400) and (7.2153, -1.5797) .. (7.5838, -1.5763);
\end{tikzpicture}
\end{minipage} \caption{$R1$: transvergent fronts $\F_{t_i-\ep}$ and $\F_{t_i+\ep}$ (both types of crossing)}
\label{f:R1}
\end{figure}
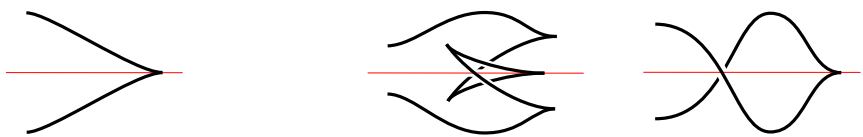
There are two possible cases for the front diagram $\F_{t_i+\ep}$, depending on the type of crossing in the move. 
When the crossing is positive, $\F_{t_i+\ep}$ appears as in the central picture of Figure~\ref{f:R1}, and it is obtained from an equivariant stabilization of $\F_{t_i-\ep}$ by a $CR$-move. 
When the crossing is negative, $\F_{t_i+\ep}$ appears as the first picture from the right of Figure~\ref{f:R1}. In this case, a sequence of moves connecting suitable equivariant stabilizations of $\F_{t_i-\ep}$ and $\F_{t_i+\ep}$ can be obtained using the fronts shown 
in Figure~\ref{f:R1_ii}. 
\begin{figure}[ht]
\centering
\begin{tikzpicture}[very thick, scale = .55]

\draw  (-2,-2.5) .. controls +(1,1) and +(-1,0)..(1.75,1.5) .. controls +(-1,0) and +(.25,-.25).. (-.75,1.5) .. controls +(.25,-.25) and +(-.5,0)..(.5,1) 
;

\draw[line width = 3, white]  (-2,2.5) .. controls +(1,-1) and +(-1,0)..(1.75,-1.5) .. controls +(-1,0) and +(.25,.25).. (-.75,-1.5) .. controls +(.25,.25) and +(-.5,0)..(.5,-1) .. controls +(-.25,0) and +(.5,0)..(-1.5,0)
;

\draw[red, very thin] (-2.6,0) -- (2,0);

\draw[line width = 2, white]  (-2,2.5) .. controls +(1,-1) and +(-1,0)..(1.75,-1.5) .. controls +(-1,0) and +(.25,.25).. (-.75,-1.5) .. controls +(.25,.25) and +(-.5,0)..(.5,-1) ;

\draw (.5,1) .. controls +(-.25,0) and +(.5,0)..(-1.5,0);

\draw[line width = 3, white]  (-2,2.5) .. controls +(1,-1) and +(-1,0)..(1.75,-1.5);

\draw (-2,2.5) .. controls +(1,-1) and +(-1,0)..(1.75,-1.5) .. controls +(-1,0) and +(.25,.25).. (-.75,-1.5) .. controls +(.25,.25) and +(-.5,0)..(.5,-1) .. controls +(-.25,0) and +(.5,0)..(-1.5,0)
;

\begin{scope}[shift = {+(7.1,0)}]
\draw  (-2,-2.5) .. controls +(1,1) and +(-1,0)..(1.75,1.5); 

\draw[white, line width = 3] (-.75,1.5) .. controls +(.25,-.25) and +(-.5,0)..(1.5,1) 
.. controls +(-.25,0) and +(.5,0)..(-1.5,0);

\draw(1.75,1.5)
.. controls +(-1,0) and +(.25,-.25).. (-.75,1.5) .. controls +(.25,-.25) and +(-.5,0)..(1.5,1) 
;

\draw[line width = 3, white]  (-2,2.5) .. controls +(1,-1) and +(-1,0)..(1.75,-1.5) .. controls +(-1,0) and +(.25,.25).. (-.75,-1.5) .. controls +(.25,.25) and +(-.5,0)..(1.5,-1) .. controls +(-.25,0) and +(.5,0)..(-1.5,0)
;

\draw[red, very thin] (-2.6,0) -- (2,0);

\draw[line width = 2, white]  (-2,2.5) .. controls +(1,-1) and +(-1,0)..(1.75,-1.5) .. controls +(-1,0) and +(.25,.25).. (-.75,-1.5) .. controls +(.25,.25) and +(-.5,0)..(1.5,-1) ;

\draw (-.75,-1.5) .. controls +(.25,.25) and +(-.5,0)..(1.5,-1) .. controls +(-.25,0) and +(.5,0)..(-1.5,0);

\draw (1.5,1) .. controls +(-.25,0) and +(.5,0)..(-1.5,0);

\draw[line width = 3, white]  (-2,2.5) .. controls +(1,-1) and +(-1,0)..(1.75,-1.5);

\draw (-2,2.5) .. controls +(1,-1) and +(-1,0)..(1.75,-1.5) .. controls +(-1,0) and +(.25,.25).. (-.75,-1.5) ;
\end{scope}

\begin{scope}[shift = {+(14.2,0)}]
\draw  (-2,-2.5) .. controls +(1,1) and +(-1,0)..(1.75,1.5); 

\draw[white, line width = 3] (-.75,1.5) .. controls +(.25,-.25) and +(-.5,0)..(1.5,1) 
.. controls +(-.25,0) and +(.5,0)..(.5,0);

\draw(1.75,1.5)
.. controls +(-1,0) and +(.25,-.25).. (-.75,1.5) .. controls +(.25,-.25) and +(-.5,0)..(1.5,1) 
;

\draw[line width = 3, white]  (-2,2.5) .. controls +(1,-1) and +(-1,0)..(1.75,-1.5) .. controls +(-1,0) and +(.25,.25).. (-.75,-1.5) .. controls +(.25,.25) and +(-.5,0)..(1.5,-1) .. controls +(-.25,0) and +(.5,0)..(.5,0)
;

\draw[red, very thin] (-2.6,0) -- (2,0);

\draw[line width = 2, white]  (-2,2.5) .. controls +(1,-1) and +(-1,0)..(1.75,-1.5) .. controls +(-1,0) and +(.25,.25).. (-.75,-1.5) .. controls +(.25,.25) and +(-.5,0)..(1.5,-1) ;

\draw (-.75,-1.5) .. controls +(.25,.25) and +(-.5,0)..(1.5,-1) .. controls +(-.25,0) and +(.5,0)..(.5,0);

\draw (1.5,1) .. controls +(-.25,0) and +(.5,0)..(.5,0);

\draw[line width = 3, white]  (-2,2.5) .. controls +(1,-1) and +(-1,0)..(1.75,-1.5);

\draw (-2,2.5) .. controls +(1,-1) and +(-1,0)..(1.75,-1.5) .. controls +(-1,0) and +(.25,.25).. (-.75,-1.5) ;
\end{scope}

\node[above] at (3.25, 0) {SR2};
\node[above] at (10.35, 0) {CR};
\draw[-latex] (2.5, 0) -- (4,0);

\draw[-latex] (9.6, 0) -- (11.1,0);
\end{tikzpicture} \caption{$R1$: equivalence of $\F_{t_i-\ep}$ and $\F_{t_i+\ep}$ up to stabilizations}
\label{f:R1_ii}
\end{figure}
More precisely, the leftmost front in Figure~\ref{f:R1_ii} is obtained from $\F_{t_i-\ep}$ by performing a CR-move and two equivariant stabilizations, while by performing two symmetric R1 on the rightmost front one obtains a stabilization of $\F_{t_i+\ep}$. 

\noindent{\bf move $R2$:}
Up to $\pi$-rotation, there are two types of $R2$-moves, but they can be treated similarly, so without loss of generality we fix an arbitrary choice. The corresponding fronts $\F_{t_i-\ep}$ and $\F_{t_i+\ep}$ are given in Figure~\ref{f:R2a}.
\begin{figure}[ht]
\centering
\begin{minipage}[c]{4cm}
\begin{tikzpicture}[scale = 0.5]
\path[draw=red,line cap=butt,line join=miter,line width=0.010cm] (3.8244,-5.5235) -- (8.4735,-5.5263);
\draw[very thick] 
(3.9099,-4.4044) .. controls (4.6133, -5.4922) and (7.6377, -5.4953) .. (8.4316, -4.4058);
\draw[very thick]
(3.9127,-6.6735) .. controls (4.6160, -5.5857) and (7.6404, -5.5826) .. (8.4343, -6.6721);
\end{tikzpicture}
\end{minipage}
\begin{minipage}[c]{4cm}
\begin{tikzpicture}[scale = 1.2]
\draw[very thick] 
(8.1059,-4.6862) .. controls (8.3000, -4.3900) and (8.8964, -3.9663) .. (9.2968, -3.9723);
\path[draw=white,line cap=butt,line join=miter,line width=0.040cm] (8.2918,-4.3756) -- (8.5261,-4.3787);
\path[draw=red,line cap=butt,line join=miter,line width=0.010cm] (7.2255,-4.3756) -- (9.5938,-4.3784);
\path[draw=white,line cap=butt,line join=miter,line width=0.10cm] (8.3276,-4.3200) -- (8.4710,-4.4303);
\path[draw=white,line cap=butt,line join=miter,line width=0.10cm] (8.5581,-4.1823) -- (8.7595,-4.1994);
\draw[very thick] 
(8.1100,-4.6794) .. controls (8.5326, -4.4681) and (9.4431, -4.5248) .. (9.5971, -4.6778);
\path[draw=white,line cap=butt,line join=miter,line width=0.10cm] (8.5611,-4.4966) -- (8.7274,-4.5978);
\draw[very thick] 
(8.0903,-4.0580) .. controls (8.2845, -4.3542) and (8.8808, -4.7779) .. (9.2813, -4.7719);
\draw[very thick]
(7.2488,-4.7749) .. controls (7.4579, -4.7672) and (8.0679, -3.7169) .. (8.5009, -3.7058) .. controls (8.9339, -3.6948) and (9.0524, -3.9697) .. (9.2923, -3.9717);
\path[draw=white,line cap=butt,line join=miter,line width=0.10cm] (7.6180,-4.3231) -- (7.7266,-4.4418);
\draw[very thick]
(7.2348,-3.9695) .. controls (7.4439, -3.9772) and (8.0539, -5.0275) .. (8.4869, -5.0386) .. controls (8.9199, -5.0496) and (9.0384, -4.7748) .. (9.2783, -4.7727);
\draw[very thick]
(8.0935,-4.0605) .. controls (8.5161, -4.2719) and (9.4267, -4.2151) .. (9.5806, -4.0621);
\end{tikzpicture}
\end{minipage}
 \caption{$R2$: the fronts $\F_{t_i-\ep}$ and $\F_{t_i+\ep}$}
\label{f:R2a}
\end{figure}
The equivalence of $\F_{t_i-\ep}$ and $\F_{t_i+\ep}$ up to equivariant stabilizations is illustrated in Figure~\ref{f:R2b}. 
Indeed, the pictures show that applying to $\F_{t_i-\ep}$ one equivariant stabilization, a $CC$-move, and a symmetric $LR$-move, the resulting front is obtained from $\F_{t_i+\ep}$ by another symmetric $LR$-move. 
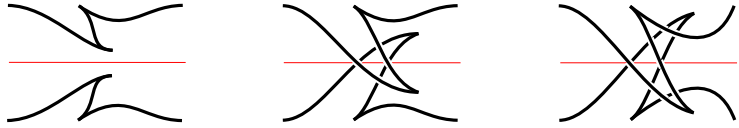
\begin{figure}[ht]
\centering
\begin{minipage}[c]{3.5cm}
\begin{tikzpicture}[scale = 0.5]
\path[draw=red,line cap=butt,line join=miter,line width=0.010cm] (3.8244,-4.7244) -- (8.4735,-4.7272);
\draw[very thick] 
(3.7983,-3.2137) .. controls (4.9090, -3.2322) and (5.7891, -4.3926) .. (6.5559, -4.4045);
\draw[very thick]
(8.3967,-3.2124) .. controls (7.2859, -3.2310) and (6.9538, -4.1000) .. (5.6583, -3.2225);
\draw[very thick]
(5.6787,-3.2305) .. controls (6.2297, -3.6358) and (5.8687, -4.3797) .. (6.5471, -4.4030);
\draw[very thick] 
(3.7781,-6.2673) .. controls (4.8888, -6.2487) and (5.7689, -5.0884) .. (6.5357, -5.0765);
\draw[very thick]
(8.3765,-6.2686) .. controls (7.2657, -6.2500) and (6.9336, -5.3810) .. (5.6381, -6.2585);
\draw[very thick] 
(5.6585,-6.2505) .. controls (6.2095, -5.8451) and (5.8485, -5.1012) .. (6.5269, -5.0779);
\end{tikzpicture}
\end{minipage}
\begin{minipage}[c]{3.5cm}
\begin{tikzpicture}[scale = 0.5]
\path[draw=red,line cap=butt,line join=miter,line width=0.010cm] (3.8244,-4.7244) -- (8.4735,-4.7272);
\draw[very thick]
(8.3967,-3.2124) .. controls (7.2859, -3.2310) and (6.9538, -4.1000) .. (5.6583, -3.2225);
\draw[very thick]
(3.7983,-6.2502) .. controls (4.9090, -6.2316) and (5.7544, -3.9517) .. (7.3655, -3.9537);
\draw[very thick]
(8.3967,-6.2515) .. controls (7.2859, -6.2329) and (6.9538, -5.3639) .. (5.6583, -6.2414);
\draw[very thick]
(5.6787,-6.2334) .. controls (6.2297, -5.8280) and (6.4498, -4.2638) .. (7.3704, -3.9525);
\path[draw=white,line cap=butt,line join=miter,line width=0.10cm] (6.2420,-5.1204) -- (6.4207,-5.2340);
\path[draw=white,line cap=butt,line join=miter,line width=0.10cm] (5.6816,-4.6611) -- (5.8258,-4.7959);
\path[draw=white,line cap=butt,line join=miter,line width=0.10cm] (6.5088,-4.6484) -- (6.6100,-4.8085);
\path[draw=white,line cap=butt,line join=miter,line width=0.10cm] (6.2863,-4.2034) -- (6.3697,-4.3677);
\draw[very thick]
(3.7983,-3.2137) .. controls (4.9090, -3.2322) and (5.7544, -5.5122) .. (7.3655, -5.5102);
\draw[very thick]
(5.6787,-3.2305) .. controls (6.2297, -3.6358) and (6.4498, -5.2001) .. (7.3704, -5.5113);
\end{tikzpicture}
\end{minipage}
\begin{minipage}[c]{3.5cm}
\begin{tikzpicture}[scale = 0.5]
\draw[very thick] 
(8.4129,-6.2612) .. controls (8.0635, -5.3310) and (7.3133, -4.9625) .. (5.6745, -6.2511);
\path[draw=white,line cap=butt,line join=miter,line width=0.1cm] (6.4959,-5.6196) -- (6.6183,-5.7163);
\path[draw=red,line cap=butt,line join=miter,line width=0.010cm] (3.8244,-4.7443) -- (8.4735,-4.7471);
\draw[very thick]
(3.7983,-6.2701) .. controls (4.9124, -6.2556) and (5.9877, -3.7256) .. (7.3523, -3.4313);
\draw[very thick]
(5.6787,-6.2533) .. controls (6.2297, -5.8479) and (6.5744, -3.8917) .. (7.3376, -3.4360);
\path[draw=white,line cap=butt,line join=miter,line width=0.1cm] (6.1447,-5.3247) -- (6.3234,-5.4383);
\path[draw=white,line cap=butt,line join=miter,line width=0.1cm] (5.6003,-4.7065) -- (5.7065,-4.8252);
\path[draw=white,line cap=butt,line join=miter,line width=0.1cm] (6.4336,-4.6874) -- (6.5146,-4.8731);
\path[draw=white,line cap=butt,line join=miter,line width=0.1cm] (6.1849,-4.0952) -- (6.2527,-4.2504);
\path[draw=white,line cap=butt,line join=miter,line width=0.1cm] (6.4957,-3.8036) -- (6.6263,-3.8728);
\path[draw=white,line cap=butt,line join=miter,line width=0.1cm] (6.7977,-3.9431) -- (6.9244,-3.9979);
\path[draw=white,line cap=butt,line join=miter,line width=0.1cm] (6.8063,-5.4678) -- (6.8818,-5.5924);
\draw[very thick] 
(3.7839,-3.2338) .. controls (4.8980, -3.2484) and (5.9734, -5.7783) .. (7.3379, -6.0726);
\draw[very thick]
(5.6643,-3.2507) .. controls (5.9667, -3.4731) and (6.2070, -4.1627) .. (6.4884, -4.8150) .. controls (6.7197, -5.3512) and (6.9789, -5.8623) .. (7.3232, -6.0679);
\draw[very thick] 
(8.3967,-3.2323) .. controls (8.0473, -4.1625) and (7.2971, -4.5309) .. (5.6583, -3.2424);
\end{tikzpicture}
\end{minipage} \caption{$R2$: equivalence of $\F_{t_i-\ep}$ and $\F_{t_i+\ep}$ up to stabilizations}
\label{f:R2b}
\end{figure}

\noindent{\bf move $M1$:} 
Up to rotation, there are two cases, depending on the type of crossing on the $x$-axis. 
In the first case, the equivalence of $\F_{t_i-\ep}$ and $\F_{t_i+\ep}$ up to stabilizations is shown in Figure~\ref{f:M1a}.
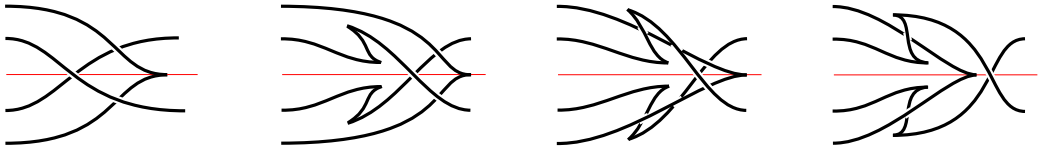
\begin{figure}[ht]
\centering
\begin{minipage}[c]{3.5cm}
\begin{tikzpicture}[scale = 0.6]
\draw[very thick]
(3.8051,-6.2431) .. controls (6.3126, -6.2450) and (6.1160, -4.7247) .. (7.3550, -4.7311);
\path[draw=red,line cap=butt,line join=miter,line width=0.010cm] (3.8244,-4.7244) -- (8.0185,-4.7248);
\path[draw=white,line cap=butt,line join=miter,line width=0.10cm] (6.1785,-5.2489) -- (6.3706,-5.3228);
\draw[very thick]
(3.8050,-5.5186) .. controls (5.0209, -5.5269) and (5.3787, -3.9005) .. (7.6057, -3.9147);
\path[draw=white,line cap=butt,line join=miter,line width=0.10cm] (5.1960,-4.6503) -- (5.3794,-4.7927);
\path[draw=white,line cap=butt,line join=miter,line width=0.10cm] (6.1678,-4.0839) -- (6.3294,-4.2412);
\draw[very thick]
(3.7983,-3.2137) .. controls (6.3057, -3.2117) and (6.1092, -4.7321) .. (7.3482, -4.7257);
\draw[very thick] 
(3.8050,-3.9271) .. controls (5.0209, -3.9189) and (5.2461, -5.5339) .. (7.7452, -5.5258);
\end{tikzpicture}
\end{minipage}
\begin{minipage}[c]{3.5cm}
\begin{tikzpicture}[scale = 0.6]
\path[draw=red,line cap=butt,line join=miter,line width=0.010cm] (3.8244,-4.7538) -- (8.2853,-4.7523);
\draw[very thick] 
(5.2533,-3.6865) .. controls (5.8113, -4.0301) and (5.5641, -4.3459) .. (5.9974, -4.4875);
\draw[very thick] 
(3.7972,-3.9636) .. controls (4.7077, -3.9472) and (5.0819, -4.4923) .. (5.9808, -4.4846);
\draw[very thick]
(3.8039,-6.2698) .. controls (7.6010, -6.2632) and (7.0661, -4.7479) .. (7.9659, -4.7552);
\draw[very thick]
(5.2670,-5.8174) .. controls (5.8250, -5.4738) and (5.5778, -5.1580) .. (6.0111, -5.0164);
\draw[very thick] 
(3.8110,-5.5403) .. controls (4.7214, -5.5567) and (5.0956, -5.0116) .. (5.9945, -5.0193);
\draw[very thick]
(5.2595,-5.8301) .. controls (6.4205, -5.4704) and (7.0492, -3.9680) .. (7.9647, -3.9624);
\path[draw=white,line cap=butt,line join=miter,line width=0.10cm] (6.6224,-4.6784) -- (6.7960,-4.8443);
\path[draw=white,line cap=butt,line join=miter,line width=0.10cm] (7.1666,-4.1983) -- (7.3252,-4.3699);
\path[draw=white,line cap=butt,line join=miter,line width=0.10cm] (7.1515,-5.1829) -- (7.3408,-5.3248);
\draw[very thick] 
(5.2458,-3.6738) .. controls (6.4068, -4.0335) and (7.0354, -5.5359) .. (7.9509, -5.5415);
\draw[very thick]
(3.7983,-3.2430) .. controls (7.5953, -3.2497) and (7.0604, -4.7650) .. (7.9603, -4.7577);\end{tikzpicture}
\end{minipage}
\begin{minipage}[c]{3.5cm}
\begin{tikzpicture}[scale = 0.6]
\draw[very thick]
(5.3740,-6.1758) .. controls (6.5350, -5.8162) and (7.0526, -3.9589) .. (7.9681, -3.9533);
\path[draw=red,line cap=butt,line join=miter,line width=0.010cm] (3.8244,-4.7498) -- (8.2853,-4.7483);
\path[draw=white,line cap=butt,line join=miter,line width=0.10cm] (7.2218,-4.5482) -- (7.0079,-4.4519);
\draw[very thick]
(3.7983,-3.2390) .. controls (5.3306, -3.2201) and (7.0622, -4.7618) .. (7.9620, -4.7545);
\path[draw=white,line cap=butt,line join=miter,line width=0.10cm] (5.6374,-3.6746) -- (5.7346,-3.8669);
\draw[very thick]
(5.3433,-3.3032) .. controls (5.7871, -3.6974) and (5.8113, -4.3459) .. (6.2446, -4.4875);
\draw[very thick] 
(3.7972,-3.9596) .. controls (4.7077, -3.9432) and (5.3412, -4.4961) .. (6.2401, -4.4884);
\path[draw=white,line cap=butt,line join=miter,line width=0.10cm] (6.3507,-4.0588) -- (6.5033,-4.2486);
\draw[very thick]
(5.3604,-6.1876) .. controls (5.8042, -5.7934) and (5.8284, -5.1449) .. (6.2617, -5.0033);
\path[draw=white,line cap=butt,line join=miter,line width=0.10cm] (5.8125,-5.6702) -- (5.5921,-5.7731);
\draw[very thick]
(3.8144,-5.5312) .. controls (4.7248, -5.5476) and (5.3584, -4.9947) .. (6.2573, -5.0024);
\path[draw=white,line cap=butt,line join=miter,line width=0.10cm] (6.8351,-4.6756) -- (6.9757,-4.8490);
\path[draw=white,line cap=butt,line join=miter,line width=0.10cm] (6.5538,-5.2881) -- (6.3320,-5.4027);
\draw[very thick]
(3.7982,-6.2695) .. controls (5.3306, -6.2884) and (7.0621, -4.7467) .. (7.9620, -4.7540);
\path[draw=white,line cap=butt,line join=miter,line width=0.10cm] (7.1849,-5.1019) -- (7.0281,-4.9225);
\draw[very thick]
(5.3569,-3.3150) .. controls (6.5178, -3.6746) and (7.0354, -5.5319) .. (7.9509, -5.5375);
\end{tikzpicture}
\end{minipage}
\begin{minipage}[c]{3.5cm}
\begin{tikzpicture}[scale = 0.6]
\path[draw=red,line cap=butt,line join=miter,line width=0.010cm] (3.8244,-4.7603) -- (8.2853,-4.7588);
\draw[very thick]
(3.7983,-3.2495) .. controls (4.9695, -3.2414) and (6.3028, -4.7558) .. (6.9461, -4.7588);
\path[draw=white,line cap=butt,line join=miter,line width=0.10cm] (5.4272,-3.8197) -- (5.4532,-4.0068);
\draw[very thick]
(5.1248,-3.4373) .. controls (5.6632, -3.4432) and (5.1933, -4.4466) .. (5.8801, -4.4926);
\draw[very thick]
(3.7972,-3.9701) .. controls (4.7077, -3.9537) and (5.0009, -4.5032) .. (5.8998, -4.4954);
\draw[very thick]
(5.1176,-6.0969) .. controls (7.4837, -6.0955) and (7.0989, -3.9675) .. (8.0144, -3.9619);
\path[draw=white,line cap=butt,line join=miter,line width=0.10cm] (7.2009,-4.6655) -- (7.3137,-4.8689);
\draw[very thick]
(5.1248,-6.0923) .. controls (5.6632, -6.0864) and (5.1924, -5.0759) .. (5.8793, -5.0299);
\draw[very thick]
(3.7972,-5.5594) .. controls (4.7077, -5.5758) and (4.9914, -5.0235) .. (5.8903, -5.0313);
\path[draw=white,line cap=butt,line join=miter,line width=0.10cm] (5.5409,-5.5429) -- (5.3626,-5.6583);
\draw[very thick] 
(3.8042,-6.2698) .. controls (4.9754, -6.2779) and (6.3094, -4.7642) .. (6.9528, -4.7613);
\draw[very thick]
(5.1176,-3.4327) .. controls (7.4837, -3.4340) and (7.0989, -5.5621) .. (8.0144, -5.5677);
\end{tikzpicture}
\end{minipage} \caption{$M1$ (first case): equivalence of $\F_{t_i-\ep}$ and $\F_{t_i+\ep}$ up to stabilizations}
\label{f:M1a}
\end{figure}
In the second case, the equivalence is shown in Figure~\ref{f:M1b}. 
\begin{figure}[ht]
\centering
\begin{minipage}[c]{3.5cm}
\begin{tikzpicture}[scale = 0.6]
\path[draw=red,line cap=butt,line join=miter,line width=0.010cm] (3.8244,-4.7696) -- (8.2853,-4.7681);
\draw[very thick] (3.7983,-3.2588) .. controls (4.9695, -3.2508) and (6.5330, -3.5709) .. (7.1763, -3.5738);
\path[draw=white,line cap=butt,line join=miter,line width=0.10cm] (5.4272,-3.8290) -- (5.4532,-4.0161);
\draw[very thick] (7.1885,-3.5769) .. controls (6.6501, -3.5828) and (6.5397, -4.1836) .. (5.8528, -4.2297);
\path[draw=white,line cap=butt,line join=miter,line width=0.10cm] (6.6275,-3.9555) -- (6.4166,-3.9385);
\draw[very thick] (5.8680,-4.2308) .. controls (6.4064, -4.2367) and (6.4834, -4.7158) .. (7.1703, -4.7619);
\draw[very thick] (3.8096,-6.2695) .. controls (4.9808, -6.2776) and (6.5442, -5.9575) .. (7.1876, -5.9546);
\draw[very thick] (5.8793,-5.2976) .. controls (6.4177, -5.2917) and (6.4947, -4.8125) .. (7.1816, -4.7665);
\draw[very thick] (4.5160,-5.8272) .. controls (5.0543, -5.8213) and (5.2824, -4.5471) .. (5.9692, -4.5010);
\path[draw=white,line cap=butt,line join=miter,line width=0.10cm] (5.4061,-4.6775) -- (5.5600,-4.8448);
\path[draw=white,line cap=butt,line join=miter,line width=0.10cm] (5.3385,-5.1118) -- (5.1051,-5.1752);
\draw[very thick] (3.8074,-5.4282) .. controls (4.7179, -5.4118) and (5.0816, -5.0478) .. (5.9805, -5.0400);
\draw[very thick] (3.8018,-4.1123) .. controls (4.7122, -4.1287) and (5.0759, -4.4927) .. (5.9748, -4.5005);
\path[draw=white,line cap=butt,line join=miter,line width=0.10cm] (5.1491,-4.2895) -- (5.2701,-4.4775);
\draw[very thick] (4.5064,-5.8222) .. controls (5.4293, -5.8624) and (7.0057, -5.1883) .. (8.1731, -5.9492);
\path[draw=white,line cap=butt,line join=miter,line width=0.10cm] (6.4323,-5.5150) -- (6.6159,-5.6596);
\draw[very thick] (4.5121,-3.7183) .. controls (5.4350, -3.6781) and (7.0114, -4.3522) .. (8.1788, -3.5913);
\draw[very thick] (7.1998,-5.9515) .. controls (6.6614, -5.9456) and (6.5509, -5.3447) .. (5.8641, -5.2987);
\draw[very thick] (4.5217,-3.7133) .. controls (5.0600, -3.7192) and (5.2880, -4.9934) .. (5.9749, -5.0395);\end{tikzpicture}
\end{minipage}
\begin{minipage}[c]{3.5cm}
\begin{tikzpicture}[scale = 0.6]
\path[draw=red,line cap=butt,line join=miter,line width=0.010cm] (3.8244,-3.5178) -- (8.2853,-3.5164);
\draw[very thick] (3.7983,-2.0071) .. controls (4.9695, -1.9990) and (6.5330, -2.3191) .. (7.1763, -2.3220);
\path[draw=white,line cap=butt,line join=miter,line width=0.10cm] (5.4272,-2.5773) -- (5.4532,-2.7644);
\draw[very thick] (7.1885,-2.3251) .. controls (6.6501, -2.3310) and (6.5397, -2.9319) .. (5.8528, -2.9779);
\path[draw=white,line cap=butt,line join=miter,line width=0.10cm] (6.6428,-2.7220) -- (6.4069,-2.6518);
\draw[very thick] (5.8680,-2.9791) .. controls (6.4064, -2.9850) and (6.4834, -3.4641) .. (7.1703, -3.5101);
\draw[very thick] (5.8793,-4.0458) .. controls (6.4177, -4.0399) and (6.4947, -3.5608) .. (7.1816, -3.5147);
\path[draw=white,line cap=butt,line join=miter,line width=0.10cm] (5.3291,-2.0434) -- (5.5719,-2.2113);
\path[draw=white,line cap=butt,line join=miter,line width=0.10cm] (4.8616,-1.9409) -- (4.9804,-2.1857);
\draw[very thick] (3.8018,-2.8606) .. controls (4.7122, -2.8769) and (5.0759, -3.2410) .. (5.9748, -3.2487);
\path[draw=white,line cap=butt,line join=miter,line width=0.10cm] (5.3365,-3.0635) -- (5.4267,-3.2746);
\path[draw=white,line cap=butt,line join=miter,line width=0.10cm] (6.4323,-4.2633) -- (6.6159,-4.4078);
\draw[very thick] (4.4569,-1.3829) .. controls (6.3444, -2.8866) and (7.0114, -3.1005) .. (8.1788, -2.3395);
\draw[very thick] (4.4564,-5.6515) .. controls (6.3440, -4.1478) and (7.0109, -3.9339) .. (8.1783, -4.6949);
\path[draw=white,line cap=butt,line join=miter,line width=0.10cm] (5.3178,-4.9097) -- (5.6589,-4.8649);
\path[draw=white,line cap=butt,line join=miter,line width=0.10cm] (6.6508,-4.4458) -- (6.4256,-4.2523);
\draw[very thick] (4.4689,-5.6442) .. controls (5.1189, -5.1335) and (5.2876, -3.2927) .. (5.9744, -3.2467);
\path[draw=white,line cap=butt,line join=miter,line width=0.10cm] (5.5014,-3.3998) -- (5.6540,-3.6142);
\path[draw=white,line cap=butt,line join=miter,line width=0.10cm] (5.2602,-3.8775) -- (5.5401,-3.8246);
\path[draw=white,line cap=butt,line join=miter,line width=0.10cm] (4.8101,-4.9646) -- (5.0669,-4.9403);
\draw[very thick] (7.1998,-4.6997) .. controls (6.6614, -4.6938) and (6.5509, -4.0930) .. (5.8641, -4.0469);
\draw[very thick] (3.8096,-5.0178) .. controls (4.9808, -5.0259) and (6.5442, -4.7058) .. (7.1876, -4.7028);
\draw[very thick] (3.8074,-4.1764) .. controls (4.7179, -4.1601) and (5.0816, -3.7960) .. (5.9805, -3.7883);
\draw[very thick] (4.4694,-1.3902) .. controls (5.1194, -1.9009) and (5.2880, -3.7417) .. (5.9749, -3.7877);\end{tikzpicture}
\end{minipage}
\begin{minipage}[c]{3.5cm}
\begin{tikzpicture}[scale = 0.6]
\path[draw=red,line cap=butt,line join=miter,line width=0.010cm] (3.8244,-3.7454) -- (8.2853,-3.7439);
\draw[very thick] (3.7983,-2.2346) .. controls (4.9695, -2.2265) and (4.8117, -2.5711) .. (5.4550, -2.5740);
\draw[very thick] (5.4490,-2.5706) .. controls (4.9107, -2.5765) and (5.2143, -2.8952) .. (4.5275, -2.9412);
\draw[very thick] (5.9861,-2.7011) .. controls (6.6535, -2.7103) and (7.4510, -2.2714) .. (8.1871, -2.2785);
\path[draw=white,line cap=butt,line join=miter,line width=0.10cm] (6.6272,-2.5176) -- (6.9438,-2.6182);
\draw[very thick] (3.8174,-5.7368) .. controls (4.7278, -5.7205) and (6.7796, -4.6872) .. (7.6785, -4.6795);
\draw[very thick] (3.8000,-5.2529) .. controls (4.9712, -5.2609) and (4.8133, -4.9164) .. (5.4567, -4.9135);
\draw[very thick] (5.4507,-4.9169) .. controls (4.9123, -4.9110) and (5.2160, -4.5923) .. (4.5291, -4.5463);
\draw[very thick] (6.0072,-4.7329) .. controls (6.8011, -4.6986) and (7.0721, -2.7532) .. (7.7122, -2.7592);
\path[draw=white,line cap=butt,line join=miter,line width=0.10cm] (6.4522,-4.3600) -- (6.7050,-4.2893);
\draw[very thick] (4.5219,-4.5408) .. controls (7.5254, -4.5305) and (7.2212, -3.7241) .. (8.0833, -3.7381);
\path[draw=white,line cap=butt,line join=miter,line width=0.10cm] (7.2172,-3.4116) -- (6.9920,-3.2981);
\draw[very thick] (4.5171,-2.9398) .. controls (7.5206, -2.9501) and (7.2163, -3.7564) .. (8.0784, -3.7424);
\path[draw=white,line cap=butt,line join=miter,line width=0.10cm] (7.1823,-4.2491) -- (7.0495,-4.0220);
\path[draw=white,line cap=butt,line join=miter,line width=0.10cm] (6.8429,-3.6257) -- (6.9702,-3.8686);
\path[draw=white,line cap=butt,line join=miter,line width=0.10cm] (6.4965,-3.0396) -- (6.6603,-3.2745);
\path[draw=white,line cap=butt,line join=miter,line width=0.10cm] (6.6377,-4.8243) -- (6.9186,-4.9009);
\draw[very thick] (3.8514,-1.7016) .. controls (4.7618, -1.7180) and (6.8136, -2.7512) .. (7.7125, -2.7590);
\draw[very thick] (5.9781,-2.6988) .. controls (6.7720, -2.7331) and (7.0430, -4.6785) .. (7.6831, -4.6724);
\draw[very thick] (6.0100,-4.7379) .. controls (6.6774, -4.7286) and (7.4610, -5.1606) .. (8.1971, -5.1534);\end{tikzpicture}
\end{minipage}
\begin{minipage}[c]{3.5cm}
\begin{tikzpicture}[scale = 0.6]
\path[draw=red,line cap=butt,line join=miter,line width=0.010cm] (3.8244,-3.7454) -- (8.2853,-3.7439);
\draw[very thick] (3.7983,-2.2346) .. controls (4.9695, -2.2265) and (4.8117, -2.5711) .. (5.4550, -2.5740);
\draw[very thick] (5.4490,-2.5706) .. controls (4.9107, -2.5765) and (5.2143, -2.8952) .. (4.5275, -2.9412);
\draw[very thick] (5.9861,-2.7011) .. controls (6.6535, -2.7103) and (7.4510, -2.2714) .. (8.1871, -2.2785);
\path[draw=white,line cap=butt,line join=miter,line width=0.10cm] (6.6272,-2.5176) -- (6.9438,-2.6182);
\draw[very thick] (3.8174,-5.7368) .. controls (4.7278, -5.7205) and (6.7796, -4.6872) .. (7.6785, -4.6795);
\draw[very thick] (3.8000,-5.2529) .. controls (4.9712, -5.2609) and (4.8133, -4.9164) .. (5.4567, -4.9135);
\draw[very thick] (5.4507,-4.9169) .. controls (4.9123, -4.9110) and (5.2160, -4.5923) .. (4.5291, -4.5463);
\draw[very thick] (6.0072,-4.7329) .. controls (6.8011, -4.6986) and (7.0721, -2.7532) .. (7.7122, -2.7592);
\draw[very thick] (4.5171,-2.9398) .. controls (5.3743, -3.0483) and (5.7652, -3.7618) .. (6.6273, -3.7478);
\path[draw=white,line cap=butt,line join=miter,line width=0.10cm] (6.8429,-3.6257) -- (6.9702,-3.8686);
\path[draw=white,line cap=butt,line join=miter,line width=0.10cm] (6.6377,-4.8243) -- (6.9186,-4.9009);
\draw[very thick] (3.8514,-1.7016) .. controls (4.7618, -1.7180) and (6.8136, -2.7512) .. (7.7125, -2.7590);
\draw[very thick] (5.9781,-2.6988) .. controls (6.7720, -2.7331) and (7.0430, -4.6785) .. (7.6831, -4.6724);
\draw[very thick] (6.0100,-4.7379) .. controls (6.6774, -4.7286) and (7.4610, -5.1606) .. (8.1971, -5.1534);
\draw[very thick] (4.5274,-4.5511) .. controls (5.3846, -4.4425) and (5.7645, -3.7347) .. (6.6266, -3.7487);\end{tikzpicture}
\end{minipage} \caption{$M1$ (second case): equivalence of $\F_{t_i-\ep}$ and $\F_{t_i+\ep}$ up to stabilizations}
\label{f:M1b}
\end{figure}
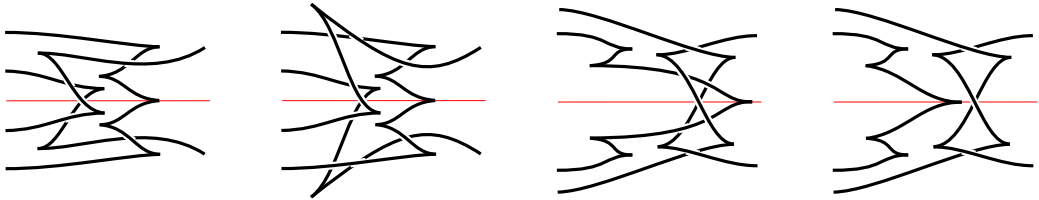

\noindent {\bf move $M2$:} 
Again, there are two cases up to rotation, depending on the crossing on the $x$-asis. But in one case $\F_{t_i-\ep}$ and $\F_{t_i+\ep}$ are identical, so there is nothing to prove. 
In the other case, it is immediately evident that $\F_{t_i-\ep}$ is obtained directly from $\F_{t_i+\ep}$ by equivariant stabilization. We omit the obvious pictures. 

\noindent{\bf move $M3$:}
There are several cases depending on the types of crossings. They can all be treated in similar ways, so we just show how to deal with the most complicated case -- see Figures~\ref{f:M3a} and~\ref{f:M3b}. 
\begin{figure}[ht]
\centering
\begin{tikzpicture}[very thick, scale =0.8]
\draw[very thick] (4.4121,-5.2046) .. controls (4.9173, -5.2069) and (5.2626, -4.8379) .. (6.1615, -4.8302);
\draw[very thick] (3.8600,-5.9963) .. controls (4.2807, -5.9750) and (4.3409, -5.6092) .. (5.0200, -5.6015);
\path[draw=white,line cap=butt,line join=miter,line width=0.10cm] (5.4542,-4.0459) -- (5.5420,-4.2364);
\path[draw=white,line cap=butt,line join=miter,line width=0.10cm] (6.1680,-4.0353) -- (6.2803,-4.2473);
\path[draw=red,line cap=butt,line join=miter,line width=0.010cm] (3.3926,-4.1593) -- (9.3488,-4.1546);
\draw[very thick] (3.8600,-5.9963) .. controls (4.2807, -5.9750) and (4.3409, -5.6092) .. (5.0200, -5.6015);
\draw[very thick] (4.3939,-3.1559) .. controls (4.8991, -3.1536) and (5.2394, -3.5140) .. (6.1383, -3.5217);
\draw[very thick] (5.2476,-4.5891) .. controls (5.7623, -4.5855) and (5.7176, -3.5423) .. (6.1555, -3.5245);
\path[draw=white,line cap=butt,line join=miter,line width=0.10cm] (5.7851,-3.7311) -- (6.0188,-3.6824);
\draw[very thick] (6.9715,-3.1886) .. controls (6.7600, -3.1811) and (6.8899, -3.6119) .. (6.6402, -3.6175);
\path[draw=white,line cap=butt,line join=miter,line width=0.10cm] (5.6363,-4.0523) -- (5.7439,-4.2914);
\draw[very thick] (4.3914,-3.1566) .. controls (4.5933, -3.1359) and (4.6923, -2.8403) .. (4.9942, -2.7683);
\path[draw=white,line cap=butt,line join=miter,line width=0.10cm] (4.6009,-2.8973) -- (4.8648,-2.9529);
\draw[very thick] (3.8417,-2.3642) .. controls (4.2625, -2.3855) and (4.3227, -2.7513) .. (5.0017, -2.7590);
\draw[very thick] (6.4212,-4.5415) .. controls (7.4263, -4.5307) and (7.6290, -4.8631) .. (8.7777, -5.3281);
\path[draw=white,line cap=butt,line join=miter,line width=0.10cm] (6.9941,-4.5149) -- (7.0813,-4.6793);
\draw[very thick] (6.4137,-4.5419) .. controls (6.9285, -4.5382) and (6.8837, -3.4951) .. (7.3217, -3.4772);
\path[draw=white,line cap=butt,line join=miter,line width=0.10cm] (6.9279,-3.7576) -- (7.1141,-3.7184);
\draw[very thick] (6.4111,-3.7876) .. controls (7.4163, -3.7984) and (7.6436, -3.4706) .. (8.7923, -3.0056);
\path[draw=white,line cap=butt,line join=miter,line width=0.10cm] (6.8062,-4.0815) -- (6.8815,-4.2368);
\draw[very thick] (3.9108,-2.8363) .. controls (4.9557, -2.8222) and (6.0797, -3.1786) .. (6.9786, -3.1864);
\draw[very thick] (6.4284,-3.7918) .. controls (6.9431, -3.7955) and (6.8984, -4.8387) .. (7.3364, -4.8565);
\path[draw=white,line cap=butt,line join=miter,line width=0.10cm] (6.7059,-3.4877) -- (6.8977,-3.3914);
\draw[very thick] (5.2485,-3.7671) .. controls (8.6032, -3.4334) and (5.8729, -2.5474) .. (8.7435, -2.5604);
\draw[very thick] (7.3229,-3.4792) .. controls (7.1145, -3.4704) and (6.8830, -3.6213) .. (6.6427, -3.6188);
\draw[very thick] (5.2489,-4.5915) .. controls (8.6036, -4.9252) and (5.8732, -5.8112) .. (8.7439, -5.7982);
\path[draw=white,line cap=butt,line join=miter,line width=0.10cm] (5.8841,-4.6192) -- (5.9845,-4.7491);
\draw[very thick] (3.9314,-5.4978) .. controls (4.9763, -5.5118) and (6.1003, -5.1554) .. (6.9992, -5.1477);
\path[draw=white,line cap=butt,line join=miter,line width=0.10cm] (4.6682,-5.3767) -- (4.8189,-5.5068);
\draw[very thick] (7.3435,-4.8548) .. controls (7.1351, -4.8636) and (6.9036, -4.7128) .. (6.6633, -4.7153);
\path[draw=white,line cap=butt,line join=miter,line width=0.10cm] (6.8208,-4.8589) -- (6.8546,-5.0058);
\draw[very thick] (5.2578,-3.7661) .. controls (5.7726, -3.7698) and (5.7278, -4.8129) .. (6.1658, -4.8308);
\draw[very thick] (4.4096,-5.2040) .. controls (4.6115, -5.2246) and (4.7105, -5.5202) .. (5.0125, -5.5922);
\draw[very thick] (6.9921,-5.1454) .. controls (6.7806, -5.1529) and (6.9105, -4.7222) .. (6.6609, -4.7165);
\begin{scope}[shift={+(8,0)}]
\draw[very thick] (6.9844,-3.1898) .. controls (6.7729, -3.1823) and (6.7099, -3.5776) .. (6.4680, -3.6500);
\draw[very thick] (5.2503,-4.5858) .. controls (5.7651, -4.5821) and (6.1188, -2.7171) .. (6.5567, -2.6992);
\path[draw=white,line cap=butt,line join=miter,line width=0.10cm] (5.6964,-4.0372) -- (5.8418,-4.0052);
\draw[very thick] (4.6685,-4.4363) .. controls (5.2024, -3.9707) and (5.7707, -4.1975) .. (7.1496, -3.5490);
\draw[very thick] (5.7704,-5.4350) .. controls (6.2757, -5.4374) and (6.0314, -5.6597) .. (6.5610, -5.6580);
\path[draw=red,line cap=butt,line join=miter,line width=0.010cm] (3.3926,-4.1593) -- (9.3488,-4.1546);
\draw[very thick] (3.8600,-5.9963) .. controls (4.2807, -5.9750) and (5.5812, -5.6588) .. (6.2028, -5.7713);
\path[draw=white,line cap=butt,line join=miter,line width=0.10cm] (6.0335,-3.0848) -- (6.2974,-3.1404);
\path[draw=white,line cap=butt,line join=miter,line width=0.10cm] (5.7493,-3.9476) -- (5.8480,-3.9394);
\path[draw=white,line cap=butt,line join=miter,line width=0.10cm] (5.9580,-3.9255) -- (6.6260,-3.8350);
\draw[very thick] (4.6621,-3.8929) .. controls (5.2742, -4.0853) and (7.4175, -3.9113) .. (8.7923, -3.0056);
\path[draw=white,line cap=butt,line join=miter,line width=0.10cm] (5.5627,-3.9780) -- (5.6380,-4.1333);
\path[draw=white,line cap=butt,line join=miter,line width=0.10cm] (5.5054,-4.3742) -- (5.6010,-4.3821);
\draw[very thick] (5.2489,-4.5915) .. controls (8.6036, -4.9252) and (5.8732, -5.8112) .. (8.7439, -5.7982);
\path[draw=white,line cap=butt,line join=miter,line width=0.10cm] (6.6373,-4.7942) -- (6.7536,-4.9600);
\path[draw=white,line cap=butt,line join=miter,line width=0.10cm] (5.8940,-4.5973) -- (5.9671,-4.7720);
\draw[very thick] (3.9314,-5.4978) .. controls (4.9763, -5.5118) and (6.1003, -5.1554) .. (6.9992, -5.1477);
\draw[very thick] (7.1298,-4.7757) .. controls (6.8741, -4.7327) and (6.7522, -4.8021) .. (6.4821, -4.6876);
\path[draw=white,line cap=butt,line join=miter,line width=0.10cm] (6.1322,-5.1361) -- (6.2209,-5.3051);
\draw[very thick] (5.7862,-5.4368) .. controls (5.9881, -5.4575) and (5.9054, -5.6931) .. (6.2073, -5.7651);
\draw[very thick] (5.7629,-2.9169) .. controls (6.2682, -2.9145) and (6.0239, -2.6922) .. (6.5535, -2.6939);
\draw[very thick] (3.8525,-2.3556) .. controls (4.2733, -2.3769) and (5.5737, -2.6931) .. (6.1953, -2.5806);
\path[draw=white,line cap=butt,line join=miter,line width=0.10cm] (6.0700,-3.0959) -- (6.2960,-3.1319);
\draw[very thick] (3.9108,-2.8363) .. controls (4.9557, -2.8222) and (6.0797, -3.1786) .. (6.9786, -3.1864);
\path[draw=white,line cap=butt,line join=miter,line width=0.10cm] (5.7916,-3.7032) -- (6.0394,-3.6598);
\path[draw=white,line cap=butt,line join=miter,line width=0.10cm] (5.6258,-4.0687) -- (5.7347,-4.2627);
\draw[very thick] (5.7787,-2.9151) .. controls (5.9806, -2.8945) and (5.8979, -2.6588) .. (6.1998, -2.5868);
\path[draw=white,line cap=butt,line join=miter,line width=0.10cm] (6.5725,-3.5275) -- (6.7657,-3.4626);
\draw[very thick] (5.2485,-3.7671) .. controls (8.6032, -3.4334) and (5.8729, -2.5474) .. (8.7435, -2.5604);
\path[draw=white,line cap=butt,line join=miter,line width=0.10cm] (5.5185,-3.9226) -- (5.6324,-4.0834);
\path[draw=white,line cap=butt,line join=miter,line width=0.10cm] (5.0914,-4.1389) -- (5.2447,-4.1868);
\draw[very thick] (7.1460,-3.5531) .. controls (6.8945, -3.6047) and (6.7561, -3.5332) .. (6.4859, -3.6477);
\draw[very thick] (4.6543,-4.4423) .. controls (5.2664, -4.2499) and (7.4098, -4.4239) .. (8.7846, -5.3296);
\path[draw=white,line cap=butt,line join=miter,line width=0.10cm] (6.0100,-4.3694) -- (6.5950,-4.5485);
\path[draw=white,line cap=butt,line join=miter,line width=0.10cm] (5.5729,-4.2714) -- (5.6693,-4.2915);
\draw[very thick] (4.6762,-3.8989) .. controls (5.2101, -4.3645) and (5.7784, -4.1378) .. (7.1573, -4.7863);
\path[draw=white,line cap=butt,line join=miter,line width=0.10cm] (5.7294,-4.2482) -- (5.8420,-4.4849);
\draw[very thick] (5.2578,-3.7661) .. controls (5.7726, -3.7698) and (6.1263, -5.6348) .. (6.5642, -5.6527);
\draw[very thick] (6.9921,-5.1454) .. controls (6.7806, -5.1529) and (6.7177, -4.7577) .. (6.4757, -4.6853);
\end{scope}
\end{tikzpicture}
 \caption{$M3$: equivalence of $\F_{t_i-\ep}$ and $\F_{t_i+\ep}$ up to stabilizations (first part)}
\label{f:M3a}
\end{figure}
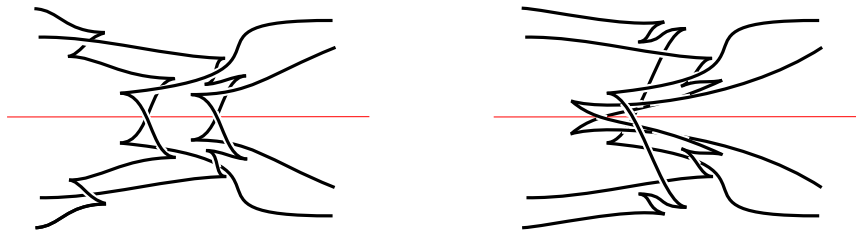
\begin{figure}[ht]
\centering
\begin{tikzpicture}[scale = 0.8]
\draw[very thick] (7.2253,-4.6279) .. controls (7.7400, -4.6242) and (8.2268, -1.5602) .. (8.6647, -1.5423);
\draw[very thick] (6.9844,-2.4704) .. controls (6.7729, -2.4629) and (6.7099, -2.8581) .. (6.4680, -2.9306);
\draw[very thick] (4.6685,-3.7169) .. controls (5.2024, -3.2513) and (5.7707, -3.4781) .. (7.1496, -2.8296);
\draw[very thick] (6.9921,-4.4260) .. controls (6.7806, -4.4335) and (6.7177, -4.0383) .. (6.4757, -3.9659);
\draw[very thick] (7.9929,-5.4457) .. controls (8.4982, -5.4481) and (8.2539, -5.6704) .. (8.7835, -5.6687);
\path[draw=red,line cap=butt,line join=miter,line width=0.010cm] (3.9468,-3.4507) -- (9.3488,-3.4352);
\draw[very thick] (3.9427,-5.5215) .. controls (4.3634, -5.5002) and (7.8000, -5.6676) .. (8.4216, -5.7801);
\path[draw=white,line cap=butt,line join=miter,line width=0.10cm] (6.0335,-2.3654) -- (6.2974,-2.4210);
\path[draw=white,line cap=butt,line join=miter,line width=0.10cm] (5.9580,-3.2061) -- (6.6260,-3.1156);
\path[draw=white,line cap=butt,line join=miter,line width=0.10cm] (7.9301,-2.9935) -- (8.0741,-2.9792);
\draw[very thick] (4.6621,-3.1735) .. controls (5.2742, -3.3659) and (8.0350, -2.9544) .. (9.3525, -2.8464);
\path[draw=white,line cap=butt,line join=miter,line width=0.10cm] (7.6678,-2.9601) -- (7.7108,-3.0674);
\path[draw=white,line cap=butt,line join=miter,line width=0.10cm] (5.5054,-3.6548) -- (5.6010,-3.6627);
\draw[very thick] (3.9314,-4.7784) .. controls (4.9763, -4.7924) and (6.1003, -4.4360) .. (6.9992, -4.4282);
\draw[very thick] (7.1298,-4.0563) .. controls (6.8741, -4.0133) and (6.7522, -4.0827) .. (6.4821, -3.9682);
\draw[very thick] (8.0004,-5.4532) .. controls (8.2023, -5.4739) and (8.1196, -5.7095) .. (8.4215, -5.7815);
\path[draw=white,line cap=butt,line join=miter,line width=0.10cm] (6.0700,-2.3765) -- (6.2960,-2.4125);
\draw[very thick] (3.9108,-2.1169) .. controls (4.9557, -2.1028) and (6.0797, -2.4592) .. (6.9786, -2.4670);
\path[draw=white,line cap=butt,line join=miter,line width=0.10cm] (7.6293,-3.9487) -- (7.7262,-3.9653);
\path[draw=white,line cap=butt,line join=miter,line width=0.10cm] (5.0914,-3.4195) -- (5.2447,-3.4674);
\draw[very thick] (7.1460,-2.8337) .. controls (6.8945, -2.8853) and (6.7561, -2.8138) .. (6.4859, -2.9283);
\draw[very thick] (4.6543,-3.7229) .. controls (5.2664, -3.5305) and (7.8583, -3.9659) .. (9.3144, -4.2459);
\path[draw=white,line cap=butt,line join=miter,line width=0.10cm] (6.0100,-3.6500) -- (6.5950,-3.8291);
\draw[very thick] (4.6762,-3.1795) .. controls (5.2101, -3.6451) and (5.7784, -3.4184) .. (7.1573, -4.0669);
\path[draw=white,line cap=butt,line join=miter,line width=0.10cm] (7.8278,-3.3587) -- (7.8840,-3.5158);
\path[draw=white,line cap=butt,line join=miter,line width=0.10cm] (8.0313,-3.9663) -- (8.0638,-4.0611);
\draw[very thick] (7.8871,-1.7652) .. controls (8.3923, -1.7628) and (8.1480, -1.5405) .. (8.6776, -1.5422);
\draw[very thick] (3.9376,-1.5142) .. controls (4.3584, -1.5355) and (7.6941, -1.5433) .. (8.3157, -1.4308);
\draw[very thick] (7.8945,-1.7577) .. controls (8.0965, -1.7370) and (8.0137, -1.5014) .. (8.3156, -1.4293);
\draw[very thick] (7.2095,-4.6204) .. controls (8.0658, -4.6531) and (7.9551, -5.1741) .. (9.3400, -5.1139);
\path[draw=white,line cap=butt,line join=miter,line width=0.10cm] (8.3486,-4.9520) -- (8.3917,-5.0713);
\path[draw=white,line cap=butt,line join=miter,line width=0.10cm] (8.2003,-2.2121) -- (8.3417,-2.1414);
\draw[very thick] (7.3398,-2.5815) .. controls (7.9578, -2.5822) and (8.0792, -1.9542) .. (9.3477, -1.9490);
\draw[very thick] (7.3311,-2.5830) .. controls (7.8458, -2.5866) and (8.3326, -5.6507) .. (8.7706, -5.6685);
\begin{scope}[shift={+(8,-0.2)}]  
\draw[very thick] (7.5914,-3.6685) .. controls (8.0668, -3.3825) and (8.3835, -1.0092) .. (8.8214, -0.9913);
\draw[very thick] (7.0162,-2.2214) .. controls (6.8047, -2.2140) and (6.7096, -2.3413) .. (6.4676, -2.4137);
\draw[very thick] (4.7003,-3.4680) .. controls (5.3471, -3.1064) and (6.7148, -2.4072) .. (7.1164, -2.3667);
\path[draw=red,line cap=butt,line join=miter,line width=0.010cm] (3.9215,-3.2018) -- (9.3806,-3.1863);
\draw[very thick] (3.9581,-5.3194) .. controls (4.3788, -5.2981) and (8.1755, -5.3115) .. (8.7971, -5.4240);
\path[draw=white,line cap=butt,line join=miter,line width=0.10cm] (6.0653,-2.1164) -- (6.3292,-2.1721);
\path[draw=white,line cap=butt,line join=miter,line width=0.10cm] (5.8281,-2.7914) -- (6.3113,-2.7113);
\path[draw=white,line cap=butt,line join=miter,line width=0.10cm] (8.0518,-2.5010) -- (8.2048,-2.4962);
\draw[very thick] (4.7145,-2.9321) .. controls (5.3752, -2.9408) and (8.3163, -2.2116) .. (9.4049, -2.6050);
\path[draw=white,line cap=butt,line join=miter,line width=0.10cm] (8.6759,-2.4320) -- (8.7162,-2.5339);
\path[draw=white,line cap=butt,line join=miter,line width=0.10cm] (5.5373,-3.4059) -- (5.6329,-3.4137);
\draw[very thick] (3.9632,-4.5294) .. controls (5.0081, -4.5435) and (6.1321, -4.1870) .. (7.0310, -4.1793);
\draw[very thick] (8.4900,-2.1507) .. controls (8.6920, -2.1713) and (8.6022, -2.6097) .. (8.9041, -2.6818);
\path[draw=white,line cap=butt,line join=miter,line width=0.10cm] (6.1018,-2.1275) -- (6.3278,-2.1636);
\draw[very thick] (3.9426,-1.8679) .. controls (4.9875, -1.8538) and (6.1115, -2.2103) .. (7.0104, -2.2180);
\path[draw=white,line cap=butt,line join=miter,line width=0.10cm] (5.1330,-3.1483) -- (5.2899,-3.2263);
\draw[very thick] (7.1296,-2.3624) .. controls (6.8782, -2.4140) and (6.7181, -2.3113) .. (6.4479, -2.4257);
\draw[very thick] (3.9492,-1.0821) .. controls (4.3699, -1.1034) and (8.2186, -1.0994) .. (8.8402, -0.9869);
\path[draw=white,line cap=butt,line join=miter,line width=0.10cm] (8.3804,-4.7030) -- (8.4235,-4.8223);
\draw[very thick] (8.4995,-2.1544) .. controls (8.7200, -2.2122) and (9.0886, -2.2433) .. (9.3860, -2.1544);
\draw[very thick] (7.5986,-3.6620) .. controls (7.8552, -3.4709) and (8.7664, -3.6622) .. (8.9435, -3.6372);
\path[draw=white,line cap=butt,line join=miter,line width=0.10cm] (8.0075,-3.5264) -- (8.0407,-3.6283);
\draw[very thick] (8.5238,-4.1673) .. controls (8.7257, -4.1466) and (8.6292, -3.7078) .. (8.9312, -3.6357);
\path[draw=white,line cap=butt,line join=miter,line width=0.10cm] (8.6646,-3.8451) -- (8.7858,-3.8501);
\draw[very thick] (8.5333,-4.1636) .. controls (8.7537, -4.1058) and (9.1215, -4.0633) .. (9.4190, -4.1522);
\path[draw=white,line cap=butt,line join=miter,line width=0.10cm] (7.8596,-3.1097) -- (7.9159,-3.2668);
\path[draw=white,line cap=butt,line join=miter,line width=0.10cm] (7.9831,-2.7427) -- (8.1283,-2.7413);
\draw[very thick] (7.5649,-2.6559) .. controls (7.8215, -2.8471) and (8.7327, -2.6557) .. (8.9098, -2.6808);
\draw[very thick] (4.7001,-3.4640) .. controls (5.3610, -3.4533) and (8.3140, -4.0300) .. (9.3906, -3.7911);
\path[draw=white,line cap=butt,line join=miter,line width=0.10cm] (5.8165,-3.4797) -- (6.3302,-3.7271);
\path[draw=white,line cap=butt,line join=miter,line width=0.10cm] (8.0852,-3.7675) -- (8.1161,-3.8751);
\draw[very thick] (7.0340,-4.1730) .. controls (6.8225, -4.1805) and (6.7273, -4.0531) .. (6.4854, -3.9807);
\draw[very thick] (7.1474,-4.0320) .. controls (6.8959, -3.9804) and (6.7359, -4.0832) .. (6.4657, -3.9687);
\draw[very thick] (7.5577,-2.6495) .. controls (8.0331, -2.9355) and (8.3644, -5.4017) .. (8.8024, -5.4196);
\draw[very thick] (4.7193,-2.9278) .. controls (5.4457, -3.3125) and (6.6500, -3.9171) .. (7.1419, -4.0230);
\end{scope}    
\end{tikzpicture} \caption{$M3$: equivalence of $\F_{t_i-\ep}$ and $\F_{t_i+\ep}$ up to stabilizations (second part)}
\label{f:M3b}
\end{figure}
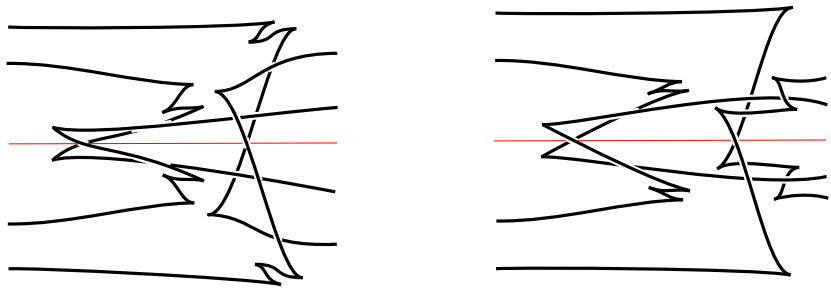

We can now assume that (i) and (ii) hold. 
Then, neither (iii) nor (iv) can be violated at a point of $D_{t_i}$ on the $x$-axis, because equivariance would imply that both tangent lines are vertical, which contradicts (ii). 
This concludes the proof of Theorem~\ref{t:stabs}. \qed

\section{Realization of maximal equivariant Thurston-Bennequin numbers}\label{s:maxeqtb} 

In this section we exhibit two infinite families of strongly invertible Legendrian links which maximize the equivariant Thurston-Bennequin number. 

\begin{proof}[Proof of Proposition~\ref{p:torus-twist}]
The maximal Thurston-Bennequin number of torus knots was computed in~\cite[Theorem~4.1]{EH01}: 
\[
\overline{\tb}(T(2,2n+1)) = \begin{cases}
2n - 1 & n \geq 0,\\
4n - 2 & n < 0.\\
\end{cases}
\]
Strongly invertible Legendrian representatives realizing the maximal Thurston-Bennequin number for $T(2,2n+1)$ are shown in Figure~\ref{f:SILtorusK}.
\begin{figure}[ht]
\centering
\begin{tikzpicture}[very thick]
\draw[very thin, red] (-3,0) -- (3,0);

\draw (-.5,-1.5) .. controls +(-1,0) and +(.5,0) .. (-2.5,0) .. controls +(.5,0) and +(-1,0) .. (-.5,1.5);

\draw (.5,-1.5) .. controls +(1,0) and +(-.5,0) .. (2.5,0) .. controls +(-.5,0) and +(1,0) .. (.5,1.5);

\draw (-.5,-1.5) .. controls +(.25,0) and +(-.25,0) .. (.25,-1.15);
\draw[white, line width = 5](.5,-1.5) .. controls +(-.25,0) and +(.25,0) .. (-.25,-1.15);
\draw (.5,-1.5) .. controls +(-.25,0) and +(.25,0) .. (-.25,-1.15);

\draw (.5,1.5) .. controls +(-.25,0) and +(.25,0) .. (-.25,1.15);

 \draw[white, line width = 5] (-.5,1.5) .. controls +(.25,0) and +(-.25,0) .. (.25,1.15);
\draw (-.5,1.5) .. controls +(.25,0) and +(-.25,0) .. (.25,1.15);

\draw (.25,.25) .. controls +(-.25,0) and +(.25,0) .. (-.25,-.25);

 \draw[white, line width = 5] (-.25,.5) .. controls +(.25,0) and +(-.25,0) .. (.25,-.5);
\draw (-.25,.25) .. controls +(.25,0) and +(-.25,0) .. (.25,-.25);

\draw[line width = 3,fill,white] (.35,1.3) .. controls +(.15,0) and +(-.15,0) .. (.45,0) .. controls +(-.15,0) and +(.15,0) .. (.35,-1.3) .. controls +(.125,0) and +(-.125,0) .. (.45,0) .. controls +(-.125,0) and +(.125,0) .. (.35,1.3);
\draw[line width = 1,fill, very thin] (.35,1.3) .. controls +(.15,0) and +(-.15,0) .. (.45,0) .. controls +(-.15,0) and +(.15,0) .. (.35,-1.3) .. controls +(.125,0) and +(-.125,0) .. (.45,0) .. controls +(-.125,0) and +(.125,0) .. (.35,1.3);
\draw[fill, white] (.5,.2) rectangle (1.7,-.5);
\node[right] at (.34,0) {$2\vert n\vert+ 1$};
\node[below right] at (.34,-.05) {\small{crossings}};

\node at (0,.8) {$\vdots$};
\node at (0,-.65) {$\vdots$};

\node at (0,-2) {$n < 0$};

\begin{scope}[shift = {+(6.5,0)}]
\draw[very thin, red] (-2.75,0) -- (2.25,0);

\draw (-.5,-1) .. controls +(-.5,0) and +(.5,0) .. (-2,0) .. controls +(.5,0) and +(-.5,0) .. (-.5,1);
\draw (-.5,-1.5) .. controls +(-1,0) and +(.5,0) .. (-2.5,0) .. controls +(.5,0) and +(-1,0) .. (-.5,1.5);

\draw[white, line width = 5] (2.22,.55) .. controls +(-.25,0) and +(1,0) .. (1,-1.5);
\draw (2.25,.5) .. controls +(-.25,0) and +(1,0) .. (1,-1.5);
\draw (2.25,.5) .. controls +(-.25,0) and +(1,0) .. (1,1);
 \draw[white, line width = 5] (2.2,-.4) .. controls +(-.25,0) and +(1,0) .. (1,1.5);
\draw(2.25,-.5) .. controls +(-.25,0) and +(1,0) .. (1,1.5);

\draw[white, line width = 5] (2.2,-.55) .. controls +(-.25,0) and +(1,0) .. (1,-1);
\draw (2.25,-.5) .. controls +(-.25,0) and +(1,0) .. (1,-1);
\draw[thin] (-.5, 1.65) rectangle (1,.85);
\draw[thin] (-.5, -1.65) rectangle (1,-.85);

\node at (.25,1.25) {$n$};
\node at (.25,-1.25) {$n$};
\node at (0.25,-2) {$n \geq 0 $};
\end{scope}
\end{tikzpicture} \caption{Strongly invertible Legendrian representatives of $T(2,2n+1)$ for each $n$. Each box contains $n$ right-handed half twists.}
\label{f:SILtorusK}
\end{figure}
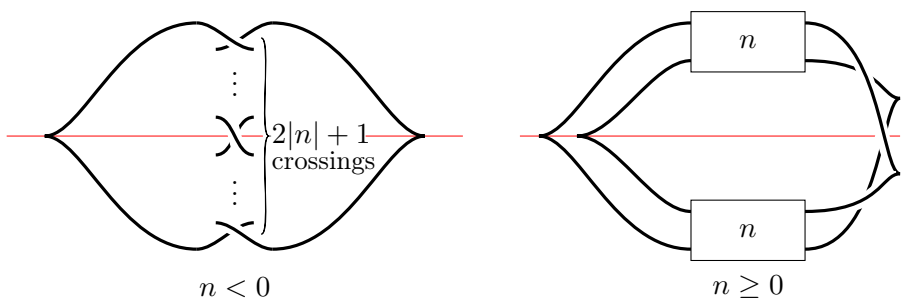

Denote by $K_m$ the twist knot of Figure~\ref{f:twists}.
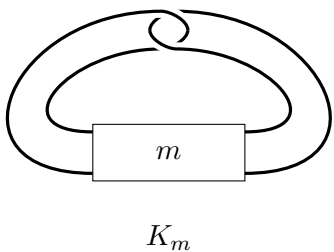
\begin{figure}[ht]
\centering
\begin{tikzpicture}[very thick]

\draw (-1,-1.1) .. controls +(-1.25,0) and +(-1.25,0) .. (-.1,0) .. controls +(.125,0) and +(0,-.125) .. (.25,.25) ;

\draw[white, line width = 5] (1,-1.1) .. controls +(1.25,0) and +(1.25,0) .. (.1,0) .. controls +(-.125,0) and +(0,-.125) .. (-.25,.25) ;

\draw (1,-1.1) .. controls +(1.25,0) and +(1.25,0) .. (.1,0) .. controls +(-.125,0) and +(0,-.125) .. (-.25,.25) ;

\draw (1,-1.65) .. controls +(2,0) and +(2,0) .. (.1,.5) .. controls +(-.125,0) and +(0,.125) .. (-.25,.25) ;

\draw[white, line width = 5] (-1,-1.65) .. controls +(-2,0) and +(-2,0) .. (-.1,.5) .. controls +(.125,0) and +(0,.125) .. (.25,.25) ;

\draw (-1,-1.65) .. controls +(-2,0) and +(-2,0) .. (-.1,.5) .. controls +(.125,0) and +(0,.125) .. (.25,.25) ;

\draw[thin] (-1,-1) rectangle (1,-1.75);
\node at (0,-1.375) {$m$};

\node at (0,-2.5) {$K_m$};
\end{tikzpicture} \caption{The twist knot $K_m$. The box contains $m$ right-handed half twists if
$m \geq 0$, and $\vert m\vert$ left-handed half twists if $m < 0$.}
\label{f:twists}
\end{figure}
It was proved in \cite[Proposition~2.6]{ENV13} that
\[
\overline{\tb}(K_m) = \begin{cases}
-m-1 & m\geq 0 \text{ and even},\\
-m-5 & m\geq 0 \text{ and odd},\\
-1 & m =-1,\\
1 & m < 0 \text{ and even},\\
-3 & m < -1 \text{ and odd}.\\
\end{cases}
\]
Note that $K_{-1}$ and $K_{0}$ are both the unknot, and the desired strongly invertible Legendrian representative is shown in Figure~\ref{f:unknot}. 
In the remaining cases, the strongly invertible Legendrian representatives are shown in Figure~\ref{f:SILtwistK}.
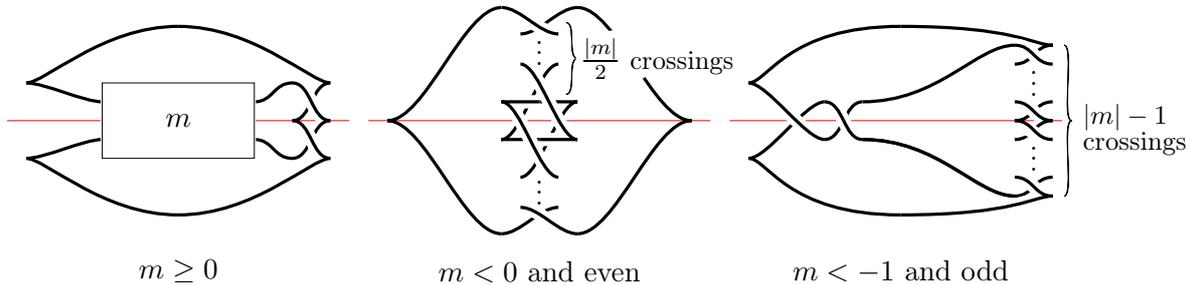
\begin{figure}[ht]
    \centering
    \begin{tikzpicture}[very thick]
\draw[very thin, red] (-2.25,0) -- (2.25,0);

\draw (-.5,-1.5) .. controls +(-.5,0) and +(.5,0) .. (-2,0) .. controls +(.5,0) and +(-.5,0) .. (-.5,1.5);

\draw (.5,-1.5) .. controls +(.5,0) and +(-.5,0) .. (2,0) .. controls +(-.5,0) and +(.5,0) .. (.5,1.5);

\draw (-.5,-1.5) .. controls +(.25,0) and +(-.25,0) .. (.25,-1.15);
\draw[white, line width = 5](.5,-1.5) .. controls +(-.25,0) and +(.25,0) .. (-.25,-1.15);
\draw (.5,-1.5) .. controls +(-.25,0) and +(.25,0) .. (-.25,-1.15);

\draw (.5,1.5) .. controls +(-.25,0) and +(.25,0) .. (-.25,1.15);

\draw[white, line width = 5] (-.5,1.5) .. controls +(.25,0) and +(-.25,0) .. (.25,1.15);
\draw (-.5,1.5) .. controls +(.25,0) and +(-.25,0) .. (.25,1.15);

\draw (0,.25) -- (.5,.25) .. controls +(-.25,0) and +(.25,0) .. (-.25,-.75);
\draw (0,-.25) -- (-.5,-.25) .. controls +(.25,0) and +(-.25,0) .. (.25,.75);

\draw[white, line width = 5] (0,-.25) --(.5,-.25) .. controls +(-.25,0) and +(.25,0) .. (-.25,.75);
\draw (0,-.25) -- (.5,-.25) .. controls +(-.25,0) and +(.25,0) .. (-.25,.75);

\draw[white, line width = 5] (0,.25) -- (-.5,.25) .. controls +(.25,0) and +(-.25,0) .. (.25,-.75);
\draw (0,.25) -- (-.5,.25) .. controls +(.25,0) and +(-.25,0) .. (.25,-.75);

\draw[line width = 1,fill, very thin] (.35,1.3) .. controls +(.15,0) and +(-.15,0) .. (.5,.825) .. controls +(-.15,0) and +(.15,0) .. (.35,.35) .. controls +(.125,0) and +(-.125,0) .. (.5,.825) .. controls +(-.125,0) and +(.125,0) .. (.35,1.3);
\draw[fill, white] (1,.825) ellipse (.5 and .35);
\node[black, right] at  (0.4,.825)  {$\frac{\vert m \vert}{2}$ \small{crossings}};

\node at (0,1) {$\vdots$};
\node at (0,-.85) {$\vdots$};

\node at (0,-2) {$m < 0$ and even};

\begin{scope}[shift = {+(-4.75,0)}]
\draw[very thin, red] (-2.25,0) -- (2.25,0);

\draw (2,.5) .. controls +(-.25,0) and +(.25,0) ..  (1.5,0);

\draw[white, line width = 5] (2,0) .. controls +(-.25,0) and +(.25,0) .. (1.5,.5);
\draw (2,0) .. controls +(-.25,0) and +(.25,0) .. (1.5,.5) .. controls +(-.25,0) and +(.25,0) ..  (1,.25);

\draw  (1,-.25) .. controls +(.25,0) and +(-.25,0) .. (1.5,-.5) .. controls +(.25,0) and +(-.25,0) ..  (2,0);

\draw[white, line width = 5] (2,-.5) .. controls +(-.25,0) and +(.25,0) ..  (1.55,-.1);
\draw (2,-.5) .. controls +(-.25,0) and +(.25,0) ..  (1.5,0);

\draw (2,-.5) .. controls +(-.25,0) and +(1,0) ..  (0,-1.25);

\draw (2,.5) .. controls +(-.25,0) and +(1,0) ..  (0,1.25);

\draw (-2,-.5) .. controls +(.25,0) and +(-1,0) ..  (0,-1.25);

\draw (-2,.5) .. controls +(.25,0) and +(-1,0) ..  (0,1.25);

\draw (-2,-.5) .. controls +(.25,0) and +(-.25,0) ..  (-1,-.25);

\draw (-2,.5) .. controls +(.25,0) and +(-.25,0) ..  (-1,.25);

\draw[white, fill] (1,.5) rectangle (-1,-.5);

\draw[very thin] (1,.5) rectangle (-1,-.5);

\node at (0,0) {$m$};
\node at (0,-2) {$m \geq 0$};
\end{scope}

\begin{scope}[shift = {+(4.75,0)}]
\draw[very thin, red] (-2.25,0) -- (2.25,0);

\draw (2,.25) .. controls +(-.25,0) and +(.25,0) ..  (1.5,0);

\draw[white, line width = 5] (2,0) .. controls +(-.25,0) and +(.25,0) .. (1.5,.25);
\draw (2,0) .. controls +(-.25,0) and +(.25,0) .. (1.5,.25);

\draw   (1.5,-.25) .. controls +(.25,0) and +(-.25,0) ..  (2,0);

\draw[white, line width = 5] (2,-.25) .. controls +(-.25,0) and +(.25,0) ..  (1.55,-.1);
\draw (2,-.25) .. controls +(-.25,0) and +(.25,0) ..  (1.5,0);

\draw (-2,-.5) .. controls +(.25,0) and +(-.25,0) ..  (-1,.25);

\draw[white, line width = 5] (-2,.5) .. controls +(.25,0) and +(-.25,0) ..  (-1,-.25);
\draw (-2,.5) .. controls +(.25,0) and +(-.25,0) ..  (-1,-.25);

\draw (-.5,.25) .. controls +(-.25,0) and +(.25,0) ..  (-1,-.25);

\draw[white, line width = 5](-.5,-.25) .. controls +(-.25,0) and +(.25,0) ..  (-1,.25);

\draw (-.5,-.25) .. controls +(-.25,0) and +(.25,0) ..  (-1,.25);

\draw (-.5,-.25) .. controls +(1,0) and +(-.5,0) ..  (1.5,-1) .. controls +(.25,0) and +(-.25,0) ..  (2,-.75);

\draw (-.5,-.25) .. controls +(1,0) and +(-.5,0) ..  (1.5,-1);

\draw[white, line width = 5] (1.5,-.75) .. controls +(.25,0) and +(-.25,0) .. (2,-1) .. controls +(-.25,0) and +(1.5,0) ..  (0,-1.25);

\draw (1.5,-.75) .. controls +(.25,0) and +(-.25,0) .. (2,-1) .. controls +(-.25,0) and +(1.5,0) ..  (0,-1.25);

\draw (1.5,.75) .. controls +(.25,0) and +(-.25,0) .. (2,1) .. controls +(-.25,0) and +(1.5,0) ..  (0,1.25);

\draw (-2,-.5) .. controls +(.25,0) and +(-1.5,0) ..  (0,-1.25);

\draw (-2,.5) .. controls +(.25,0) and +(-1.5,0) ..  (0,1.25);

\draw[white, line width = 5] (-.5,.25) .. controls +(1,0) and +(-.5,0) ..  (1.5,1) .. controls +(.25,0) and +(-.25,0) ..  (2,.75);

\draw (-.5,.25) .. controls +(1,0) and +(-.5,0) ..  (1.5,1) .. controls +(.25,0) and +(-.25,0) ..  (2,.75);
\draw[line width = 5,fill,white] (2.15,1) .. controls +(.15,0) and +(-.15,0) .. (2.25,0) .. controls +(-.15,0) and +(.15,0) .. (2.15,-1) .. controls +(.125,0) and +(-.125,0) .. (2.25,0) .. controls +(-.125,0) and +(.125,0) .. (2.15,1);
\draw[line width = 1,very thin, fill] (2.15,1) .. controls +(.15,0) and +(-.15,0) .. (2.25,0) .. controls +(-.15,0) and +(.15,0) .. (2.15,-1) .. controls +(.125,0) and +(-.125,0) .. (2.25,0) .. controls +(-.125,0) and +(.125,0) .. (2.15,1);
\node[right] at (2.25,0.05) {\small{$\vert m \vert - 1$}};
\node[below right] at (2.25,0) {\small{crossings}};
\node at (0,-2) {$m < -1 $ and odd};

\node at (1.75,.6) {$\vdots$};
\node at (1.75,-.4) {$\vdots$};
\end{scope}
\end{tikzpicture}     \caption{Strongly invertible Legendrian representatives of maximal Thurston Bennequin number for $K_m$, with $m \neq -1$.}
    \label{f:SILtwistK}
\end{figure}
This proves the statement for twist knots. 
\end{proof}

\section{Experimental results and conjectures}\label{s:expconj}

In this section we discuss some experimental results and state three conjectures. 

In view of Proposition~\ref{p:torus-twist}, we looked for strongly invertible Legendrian knots realizing the maximal Thurston-Bennequin number in the simplest strongly invertible knot types which are neither twist knots nor of type $T(2,2n+1)$. 
Among the knots appearing in~\cite{LKA13}, which have grid number less than $10$, we could find such Legendrian representatives up to and including crossing number eight. 
On the other hand, we were unable to find strongly invertible Legendrian representatives for each knot type with grid number less than ten and crossing number nine. 
For example, according to~\cite{LKA13} we have $\overline{\tb}(9_{42}) = -3$, but we could not find a strongly invertible Legendrian knot of type $9_{42}$ with Thurston-Bennequin number bigger than $-5$. 

On the basis of our experimental evidence we formulate the following 
\begin{con}\label{conj:tbmax}
$\overline{\tb}_e(9_{42}) < \overline{\tb}(9_{42})$.
\end{con}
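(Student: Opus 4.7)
The plan is to establish an equivariant refinement of a tight upper bound on $\tb$ and apply it to $9_{42}$, where the classical slice-Bennequin inequality already fails to be sharp ($2g_s(9_{42})-1 = 1$, while $\overline{\tb}(9_{42})=-3$). Since even the non-equivariant value $-3$ must be detected by a more refined invariant (e.g.~the Kauffman polynomial bound or Khovanov/Heegaard Floer concordance invariants), the task is to promote such an invariant to a strictly smaller equivariant counterpart.

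First, I would try to formulate an equivariant $s$- or $\tau$-type invariant $\tilde\nu(K)$ for a strongly invertible knot $K$, with the property that every strongly invertible Legendrian representative $\L$ of $K$ satisfies
\[
\tb(\L) + |\rot(\vec\L)| \leq \tilde\nu(K)-1.
\]
For the classical $s$- and $\tau$-bounds of Plamenevskaya and Ozsv\'ath--Szab\'o, the proof proceeds by attaching a Lagrangian cobordism to a symplectic filling of $(S^3,\xi_{\rm st})$ and invoking an adjunction inequality. The required input in our setting is therefore an equivariant Weinstein filling of $(S^3,\xi_{\rm st})$ extending the natural extension of $\tau$ to $B^4$, together with an equivariant adjunction/bounds-on-$\tau$ argument applied to equivariant pseudo-holomorphic curves. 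Equivariant analogs of $\tau$ and $s$ have recently been developed in several guises (involutive and equivariant Heegaard Floer, equivariant Khovanov), and one of them should plausibly admit a Legendrian refinement of this kind.

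Second, I would compute the relevant equivariant invariant of $9_{42}$ and show that it forces $\tilde\nu(9_{42}) \leq -3$, giving $\overline{\tb}_e(9_{42}) \leq -4$. The knot $9_{42}$ is a natural testing ground because its non-equivariant $\tau$ and $s$ vanish while its equivariant slice obstructions are conjecturally non-trivial, detectable by equivariant signatures or equivariant Casson--Gordon-type invariants associated with its (essentially unique) strong inversion.

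A backup, purely combinatorial route is an exhaustive search: by Proposition~\ref{p:legrep} and Theorem~\ref{t:SILRT}, any strongly invertible Legendrian $9_{42}$ with $\tb\geq -3$ must come from a transvergent front diagram of bounded complexity (since $\tb+|\rot|\leq 1$ already bounds the number of cusps), so modulo the equivariant moves the space of candidates is finite and in principle amenable to a computer search. The main obstacle on the theoretical side is the first step: bridging the equivariant-concordance literature, which is designed to obstruct equivariant sliceness, with the contact-geometric bounds on $\tb$ requires running the Lagrangian-cobordism arguments in the $\tau$-equivariant category and in particular controlling the behavior of the equivariant cobordism along the fixed-point set $F_\tau$. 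On the computational side, the knot $9_{42}$ may be at the boundary of what can be handled by hand, so either a substantial programming effort or a theoretical shortcut via an especially computable equivariant invariant will likely be necessary.
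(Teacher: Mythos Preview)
The statement you are attempting to prove is labelled \texttt{con} (Conjecture) in the paper, not a theorem or proposition. The paper does \emph{not} prove it; the authors explicitly state that they ``formulate the following'' conjecture ``on the basis of our experimental evidence,'' namely that they could not find a strongly invertible Legendrian representative of $9_{42}$ with Thurston--Bennequin number bigger than $-5$. There is therefore no proof in the paper to compare your proposal against.

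What you have written is not a proof but a research programme, and you are candid about this (``I would try to formulate\ldots'', ``should plausibly admit\ldots''). As a programme it is reasonable in spirit, but two points deserve flagging. First, the theoretical route hinges on the existence of an equivariant slice-type invariant $\tilde\nu$ that both (a) satisfies a Bennequin-type inequality for strongly invertible Legendrians and (b) takes a strictly smaller value on $9_{42}$ than its non-equivariant counterpart. Neither (a) nor (b) is currently in the literature, and you acknowledge that bridging equivariant concordance invariants with contact-geometric bounds is the main obstacle; until that bridge is built this is not a proof sketch but a wish. Second, your ``backup'' combinatorial route contains an error: bounding $\tb+|\rot|$ does \emph{not} bound the number of cusps of a front, only the signed count of cusps relative to crossings. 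One can add cancelling pairs of cusps and crossings (e.g.\ via Reidemeister~II) indefinitely without changing $\tb$ or $\rot$, so the space of transvergent fronts with $\tb\geq -3$ is not finite in any obvious sense. A genuine finite search would require an additional bound such as arc index or grid number, together with an argument that the equivariant grid number is controlled---and that is itself an open-ended problem, as the paper's discussion around Equation~\eqref{eq:JonesConj} indicates.
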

Recall that, given an oriented Legendrian knot $\overrightarrow{\K}\subset\R^3$, the {\em Legendrian mirror} $\mu(\overrightarrow{\K})$ is obtained from $\overrightarrow{\K}$ by a $\pi$-rotation around the $x$-axis~\cite{Et05}. 
It is easy to check that, for any choice of orientation, a strongly invertible Legendrian knot coincides with its reversed Legendrian mirror. 
The oriented Legendrian representative of $9_{42}$ with maximal Thurston-Bennequin number shown in~\cite{LKA13} is not isotopic to its reversed Legendrian mirror, so we initially wondered whether this phenomenon could be the only one obstructing the equality $\overline{\tb}_e = \overline{\tb}$. 
Then, we became aware of the knot type $m(10_{125})$, denoted by $m10n2$ in~\cite{LKA23}, for which every  maximal Legendrian representative listed in~\cite{LKA23} is equivalent to its reversed Legendrian mirror. 
We were unable to find a strongly invertible representative of $m(10_{125})$ with maximal Thurston-Bennequin invariant.  
Hence, we state the following: 
\begin{con}\label{conj:tbmaxII}
Let $\K$ be a Legendrian knot representing $m(10_{125})$ with $\tb(\K)=-6$. 
Then, $\K$ is Legendrian isotopic to $-\mu(\K)$ but not to a strongly invertible Legendrian knot. 
\end{con}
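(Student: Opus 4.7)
The conjecture splits into two parts, and I would address them separately. For the first part, that the maximal Thurston-Bennequin Legendrian representative $\K$ of $m(10_{125})$ is Legendrian isotopic to $-\mu(\K)$, my plan is to extract from the Legendrian knot atlas~\cite{LKA23} an explicit front diagram of each Legendrian class with $\tb=-6$ and construct a sequence of $LR$-moves realising the isotopy to its reversed Legendrian mirror. Since the atlas already lists these maximal representatives, the verification is essentially a finite mechanical check; one should also confirm via a grid-number/bridge argument that the atlas is complete at $\tb=-6$, so that the conclusion applies to every such $\K$.

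The heart of the matter is the second part: producing an obstruction to $\K$ being Legendrian isotopic to a strongly invertible Legendrian knot. The natural strategy is to develop an invariant of strongly invertible Legendrian knots that refines a known Legendrian invariant in the presence of the involution $\tau$. By Theorem~\ref{t:SILRT}, such an invariant amounts to a quantity attached to transvergent front diagrams that is preserved under equivariant planar isotopies, symmetric pairs of $LR$-moves, and the moves of Figure~\ref{f:moves}. A natural candidate is a $\Z/2$-equivariant enhancement of the Chekanov-Eliashberg DGA (or of a combinatorial cousin such as the GRID invariants): the involution induces a $\Z/2$-action on the algebra generated by crossings and cusps of a transvergent front, and one can try to extract invariants such as equivariant augmentation varieties or $\Z/2$-equivariant linearised homologies. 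Since Legendrian isotopy gives a homotopy equivalence of DGAs, any strongly invertible representative would induce an equivariant structure on the homotopy type; exhibiting a homotopy type that does not admit any such structure obstructs strong invertibility.

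The strategy is then to compute this equivariant refinement on each $\tb=-6$ representative of $m(10_{125})$ and show that no compatible $\tau$-action exists. The main obstacle will be constructing and controlling such an equivariant invariant precisely enough to produce a computable obstruction: even if a $\Z/2$-equivariant Chekanov-Eliashberg theory is set up, one must understand its behaviour under the $S$- and $T$-stabilisations of Section~\ref{s:stab} (and in particular use Corollary~\ref{c:notequ} to see that these genuinely differ), and then carry out the computation on an eleven-crossing front. A fallback would be a computer-assisted enumeration: using Theorem~\ref{t:stabs} to bound the complexity of a minimal strongly invertible Legendrian representative at $\tb=-6$, then exhaustively listing transvergent front diagrams of that bounded complexity and verifying that none represents $m(10_{125})$. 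Both routes are delicate, and I expect the definition of an obstruction fine enough to distinguish \emph{equivariant} Legendrian structure from mere existence of a Legendrian representative isotopic to its reversed mirror to be the hardest step.
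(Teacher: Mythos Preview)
The statement you are addressing is Conjecture~\ref{conj:tbmaxII}, and the paper does \emph{not} prove it. The surrounding text in Section~\ref{s:expconj} makes this explicit: the authors report that every maximal Legendrian representative of $m(10_{125})$ listed in~\cite{LKA23} is equivalent to its reversed Legendrian mirror, and that they ``were unable to find a strongly invertible representative of $m(10_{125})$ with maximal Thurston-Bennequin invariant.'' On that experimental basis alone they formulate the conjecture. There is no argument in the paper to compare your proposal against.

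Your write-up is therefore not a proof but a research plan, and you seem aware of this. As a plan it is sensible in outline --- an equivariant refinement of the Chekanov--Eliashberg DGA is a natural place to look for an obstruction, and the alternative of a bounded computer search is at least a well-defined fallback --- but several of the steps you describe are themselves open-ended research problems. In particular: (i) the claim that the atlas is complete at $\tb=-6$ for $m(10_{125})$ is not something one can simply ``confirm via a grid-number/bridge argument''; classification of maximal-$\tb$ Legendrian representatives is in general hard and the atlas is conjectural in many entries; (ii) constructing a $\Z/2$-equivariant Legendrian invariant that is both well-defined under the moves of Theorem~\ref{t:SILRT} and computable enough to obstruct equivariance for a specific knot is a substantial project, not a lemma; and (iii) Theorem~\ref{t:stabs} gives no effective bound on the number of stabilisations needed, so it does not by itself bound the complexity of a putative strongly invertible representative at fixed $\tb$. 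None of this is a flaw in your reasoning so much as an honest acknowledgment that the conjecture is open; just be clear that what you have written is a proposed line of attack, not a proof.
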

Recall that the Jones' conjecture proved in~\cite{DP13, LFM14} implies the equality
\begin{equation}\label{eq:JonesConj}
\overline{\tb}(K) + \overline{\tb}(m(K)) = -{\rm grid\ number} (K)\ .
\end{equation} 
One can introduce a natural definition of ``equivariant grid number'' for a strongly invertible knot, as the minimal grid number of a symmetric grid (with respect to the north-west-to-south-east diagonal) realizing the given knot type. 
A positive answer to Conjecture~\ref{conj:tbmaxII} would yield a countexample to the equivariant analog of Equation~\eqref{eq:JonesConj}, where $\overline{\tb}$ is replaced with $\overline{\tb}_e$ and the grid number with the equivariant grid number. 
In fact, this conclusion would follow from the fact, which we have verified, that both the grid number and the Thurston-Bennequin number of the knot type $10_{125}$ coincide with their equivariant analogs.  

Aside from the equality $\overline{\tb}_e = \overline{\tb}$, one could investigate the existence of strongly invertible oriented Legendrian knots with a given pair of values $(\tb,\rot)$. 
For example, according to~\cite{LKA23} there is an oriented Legendrian knot $\overrightarrow\K$ of type $8_5$,  Legendrian isotopic to $-\mu(\overrightarrow\K)$, with $\tb(\overrightarrow\K) = -11$ and $\rot(\overrightarrow\K) = 2$.
We could find a strongly invertible oriented Legendrian representative of $8_5$ with $(\tb ,\rot ) = (-11,0)$, but were unable to find a strongly invertible Legendrian knot isotopic to $\K$ with either orientation.  
As a result, we state the following: 
\begin{con}\label{conj:norepr}
Let $\overrightarrow\K$ be a strongly invertible oriented Legendrian knot of type $8_5$ with $\tb(\overrightarrow\K) = -11$. Then, $\rot(\overrightarrow\K) = 0$.
\end{con}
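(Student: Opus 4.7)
The approach I propose combines the classification of Legendrian representatives of $8_5$ from the atlas \cite{LKA23} with a search for an equivariant obstruction. According to the atlas there are finitely many Legendrian isotopy classes of oriented representatives of $8_5$ with $\tb(\overrightarrow\K)=-11$, and they are distinguished by $\rot(\overrightarrow\K)\in\{0,\pm 2\}$, the classes with $\rot=\pm 2$ being related by orientation reversal. So the conjecture reduces to showing that the class with $\rot=2$ contains no representative $\overrightarrow\K$ literally equal to $-\mu(\overrightarrow\K)$, which is a strictly stronger condition than merely being Legendrian isotopic to its reversed Legendrian mirror.

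As a first step I would carry out a cusp-count analysis on transvergent fronts. By the Legendrian condition and $\tau$-equivariance, each of the two fixed points of $\tau$ on $\K$ projects to a cusp on the $x$-axis at which the tangent lies in the $\pm y$-direction. A direct computation shows that each on-axis cusp contributes $\pm \tfrac12$ to $\rot(\overrightarrow\K)$, while off-axis cusps come in symmetric pairs $\{c,\tau(c)\}$ of the same ``up'' or ``down'' type, so each pair contributes $\pm 1$. This confirms $\rot\in\Z$ and constrains the possible cusp configurations to the form $\rot = (\text{integer})+\epsilon$ with $\epsilon\in\{-1,0,1\}$, but does not by itself exclude $\rot=\pm 2$. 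Combining with the writhe-cusp identity $\tb = w(\F)-\tfrac12\#\text{cusps}=-11$ and the requirement that the underlying smooth knot be $8_5$, I would attempt a finite combinatorial case analysis, enumerating transvergent fronts of bounded complexity up to the moves of Theorem~\ref{t:SILRT}, to rule out $(\tb,\rot,[K])=(-11,\pm 2, 8_5)$.

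If the combinatorial route does not close the argument, the next plan is to develop an equivariant Legendrian invariant sensitive to the distinction between ``Legendrian isotopic to $-\mu(\overrightarrow\K)$'' and ``literally equal to $-\mu(\overrightarrow\K)$''. Natural candidates are an equivariant version of the Chekanov-Eliashberg DGA and its augmentation variety, an equivariant LOSS-type invariant in knot Floer homology, or an equivariant refinement of the ruling polynomial. The obstruction would take the form: the $-\mu$-action on a moduli space functorially attached to the Legendrian class with $(\tb,\rot)=(-11,2)$ has no fixed point of the kind that would arise from a strongly invertible representative, whereas such a fixed point must exist whenever a strongly invertible representative does.

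The main obstacle is the immaturity of equivariant Legendrian invariants. The classical pair $(\tb,\rot)$ is, by the very statement of the conjecture, insufficient, and no off-the-shelf equivariant refinement is presently known to be computable on $8_5$ and to be sensitive to the gap between Legendrian isotopy and literal $-\mu$-equivariance. A natural supplementary strategy is an exhaustive computer search over symmetric grid diagrams for $8_5$ up to a bounded grid number, checking that $(\tb,\rot)=(-11,\pm 2)$ is never realized equivariantly; this would not yield a proof but would strengthen the conjecture empirically and perhaps suggest which equivariant invariant is best suited to close the argument.
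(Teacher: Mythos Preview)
The statement you are addressing is a \emph{conjecture} in the paper, not a theorem: the authors explicitly say they were unable to find a strongly invertible Legendrian representative of $8_5$ with $(\tb,\rot)=(-11,\pm 2)$ and, on that experimental basis alone, state Conjecture~\ref{conj:norepr}. There is no proof in the paper to compare against.

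Your proposal is accordingly not a proof but a research program, and you are candid about this (``if the combinatorial route does not close the argument'', ``the main obstacle is the immaturity of equivariant Legendrian invariants'', ``would not yield a proof''). A few specific comments are still in order. First, the reduction via \cite{LKA23} is only conjectural: the atlas is not known to be complete for $8_5$, so even if you ruled out strongly invertible representatives in each listed class you would not have a proof. Second, your cusp-count analysis is correct as far as it goes---on-axis cusps contribute $\pm\tfrac12$ each (two of them, so $\{-1,0,1\}$ total), off-axis cusps pair up with equal sign---but, as you note, this places no nontrivial constraint on $\rot$ beyond integrality, so it cannot by itself exclude $\rot=\pm 2$. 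Third, the equivariant-invariant route (equivariant Chekanov--Eliashberg DGA, equivariant LOSS, equivariant rulings) is the natural line of attack and is in the spirit of the paper's closing discussion, but no such invariant currently exists in computable form; developing one is the actual content of the problem, not a step in a proof.

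In short: there is nothing wrong with your outline as a plan, but it does not prove the conjecture, and neither does the paper.
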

We close the paper with the description of a fortuitous discovery. 
While examining nine-crossing knots we ran into the following example, which shows that the information in~\cite{LKA13} about $m(9_{44})$ needs to be slightly revised. 
Consider the oriented Legendrian knot $\overrightarrow\K_1$ with topological type $m(9_{44})$ and maximal Thurston-Bennequin number $-3$ described in~\cite{LKA13}, where one finds the claim that $\overrightarrow\K_1$ is Legendrian isotopic to its Legendrian mirror~\footnote{In~\cite{LKA13} the Legendrian mirror is defined using $\pi$-rotation around the {\em $y$-axis}, but this definition yields an equivalent Legendrian to the one above. 
This follows from~\cite[Theorem~1.2]{Keg18} and the fact that the composition of the two rotations is a contactomorphism which preserves the orientation of the contact planes.}.
We were unable to verify the claim using {\tt KnotMatcher}~\cite{KnotMatcher}.
In fact, we are going to show that the claim leads to a contradiction with recent work of Dynnikov and Shastin~\cite{DS23}. 
Let $\K_1$ be the unoriented Legendrian knot underlying $\overrightarrow\K_1$. 
A front diagram for $\K_1$ is illustrated on the left of Figure~\ref{f:m9_44-pair}. 
\begin{figure}[ht]
\begin{minipage}[c]{5.5cm}
\begin{tikzpicture}[scale = 0.5]
\draw[very thick]
(4.0568,-1.5905) .. controls (4.5995, -1.5898) and (4.7885, 0.0557) .. (5.5674, 0.0620);

\draw[very thick]
(7.0781,-1.5905) .. controls (7.6208, -1.5898) and (8.2098, 0.6729) .. (8.9888, 0.6792);

\draw[very thick]
(10.8995,-1.5905) .. controls (11.4422, -1.5898) and (12.0313, 0.6729) .. (12.8103, 0.6792);

\draw[very thick] 
(14.7210,-1.5905) .. controls (14.1783, -1.5898) and (13.5892, 0.6729) .. (12.8103, 0.6792);

\draw[very thick]
(14.7210,-1.5905) .. controls (14.1783, -1.5911) and (13.1289, -3.8184) .. (11.9153, -3.7961);

\draw[very thick]
(8.8543,-1.4467) .. controls (9.3970, -1.4460) and (9.9861, 0.8167) .. (10.7650, 0.8230);

\draw[very thick]
(5.5007,-1.5956) .. controls (6.0433, -1.5949) and (6.2324, 0.0506) .. (7.0113, 0.0569);

\draw[very thick]
(12.6758,-1.4467) .. controls (12.1331, -1.4474) and (11.0837, -3.6746) .. (9.8701, -3.6523);

\draw[very thick]
(6.4622,-2.8838) .. controls (5.3726, -2.8736) and (4.8357, -1.5968) .. (4.0568, -1.5905);

\draw[very thick]
(10.9144,-0.5713) .. controls (10.0359, -0.5814) and (9.5036, -2.8888) .. (6.4622, -2.8838);

\draw[very thick]
(10.0141,-4.4724) .. controls (11.2633, -4.4773) and (11.8995, -2.3745) .. (12.6785, -2.3682);

\draw[color=white, line width=0.15cm] 
(6.2246,-0.4269) -- (6.3577,-0.6612);

\draw[color=white, line width=0.15cm]
(7.7477,-0.5523) -- (7.8845,-0.7796);

\draw[color=white, line width=0.15cm]
(9.7709,-0.0052) -- (9.8966,-0.2190);

\draw[color=white, line width=0.15cm] 
(8.6266,-2.2528) -- (8.7267,-2.4500);

\draw[color=white, line width=0.15cm]
(7.1179-0.218,-2.7363+0.1963) -- (7.3359+0.218,-2.9326-0.1963);

\draw[color=white, line width=0.15cm] 
(10.2473,-0.8461) -- (10.3411,-1.0121);

\draw[color=white, line width=0.15cm] 
(9.3436,-1.7555) -- (9.4691,-1.8804);

\draw[color=white, line width=0.15cm] 
(11.4580-0.0769,-0.8745+0.0853) -- (11.5349+0.0769,-0.9598-0.0853);

\draw[color=white, line width=0.15cm] 
(11.7739-0.1045,-0.2521+0.1856) -- (11.8784+0.1045,-0.4377-0.1856);

\draw[color=white, line width=0.15cm] 
(12.0726,-1.7775) -- (12.1794,-1.9437);

\draw[color=white, line width=0.15cm]
(10.7431,-3.1837) -- (10.9148,-3.3309);

\draw[color=white, line width=0.15cm] 
(11.2711,-3.5838) -- (11.4393,-3.6762);

\draw[very thick] 
(12.6758,-1.4467) .. controls (12.1331, -1.4460) and (11.5440, 0.8167) .. (10.7650, 0.8230);

\draw[very thick] 
(12.6785,-2.3682) .. controls (12.1358, -2.3675) and (11.6933, -0.5776) .. (10.9144, -0.5713);

\draw[very thick]
(8.8543,-1.4467) .. controls (9.3970, -1.4474) and (10.8049, -3.8038) .. (11.9153, -3.7961);

\draw[very thick] 
(10.8995,-1.5905) .. controls (10.3568, -1.5898) and (9.7678, 0.6729) .. (8.9888, 0.6792);

\draw[very thick]
(7.0113,0.0569) .. controls (8.0185, 0.0604) and (8.7597, -3.6600) .. (9.8701, -3.6523);

\draw[very thick]
(7.0781,-1.5905) .. controls (6.5354, -1.5898) and (6.3464, 0.0557) .. (5.5674, 0.0620);

\draw[very thick] 
(5.5007,-1.5956) .. controls (6.3486, -1.5805) and (8.3463, -4.4890) .. (10.0141, -4.4724);
\end{tikzpicture}
\end{minipage}
\hspace{1cm}
\begin{minipage}[c]{6cm}
\begin{tikzpicture}[very thick, scale = .35]
\draw[thin, red] (-2.75,0) -- (13.75,0);
\draw (0,.5) .. controls +(.5,0) and +(-.5,0) .. (2.75,3.5);
\draw (2.75,-3.5) .. controls +(.5,0) and +(-1,0) .. (6,-.5);
\draw (0,-.5) .. controls +(1.5,0) and +(-2,0) .. (6,4);
\draw (10,-.75) .. controls +(-1,0) and +(1,0) .. (6,-4);
\draw (10,3) .. controls +(-1.5,0) and +(1,0) .. (6,.5);
\draw (2,-2) .. controls +(2,0) and +(-1,0) .. (6,2);
\draw[white, line width = 4] (2,2) .. controls +(2,0) and +(-1,0) .. (6,-2);
\draw (2,2) .. controls +(2,0) and +(-1,0) .. (6,-2);
\draw (6,-2) .. controls +(1,0) and +(-1,0) .. (9,.5);
\draw[white, line width = 4] (6,2) .. controls +(1,0) and +(-1,0) .. (9,-.5);
\draw (6,2) .. controls +(1,0) and +(-1,0) .. (9,-.5);
\draw (9,-.5) .. controls +(.5,0) and +(-.25,0) .. (10,.75);
\draw[white, line width = 4] (9,.5) .. controls +(.5,0) and +(-.25,0) .. (9.85,-.5);
\draw (9,.5) .. controls +(.5,0) and +(-.25,0) .. (10,-.75);
\draw[white, line width = 4] (10,-3) .. controls +(-1.5,0) and +(1,0) .. (6,-.5);
\draw (10,-3) .. controls +(-1.5,0) and +(1,0) .. (6,-.5);
\draw[white, line width = 4] (9.85,.855) .. controls +(-1.1,0) and +(1,0) .. (6,4);
\draw (10,.75) .. controls +(-1,0) and +(1,0) .. (6,4);
\draw[white, line width = 4] (0.275,.39) .. controls +(1.6,0) and +(-2,0) .. (6,-4);
\draw (0,.5) .. controls +(1.5,0) and +(-2,0) .. (6,-4);
\draw[white, line width = 4]  (-2,0.1) .. controls +(.5,0) and +(-2,0) .. (2,2);
\draw (-2,0) .. controls +(.5,0) and +(-2,0) .. (2,2);
\draw (-2,0) .. controls +(.5,0) and +(-2,0) .. (2,-2);
\draw[white, line width = 4] (0.275,-.69) .. controls +(.55,0) and +(-.5,0) .. (2.75,-3.5);
\draw (0,-.5) .. controls +(.5,0) and +(-.5,0) .. (2.75,-3.5);
\draw[white, line width = 4] (2.75,3.5) .. controls +(.5,0) and +(-1,0) .. (6,.5);
\draw (2.75,3.5) .. controls +(.5,0) and +(-1,0) .. (6,.5);
\draw (13,0) .. controls +(-.5,0) and +(1.5,0) .. (10,3);
\draw (13,0) .. controls +(-.5,0) and +(1.5,0) .. (10,-3);
\end{tikzpicture}
\end{minipage}
 \caption{The two Legendrian knots $\K_1$ (left) and $\K_2$ (right)}
\label{f:m9_44-pair}
\end{figure}
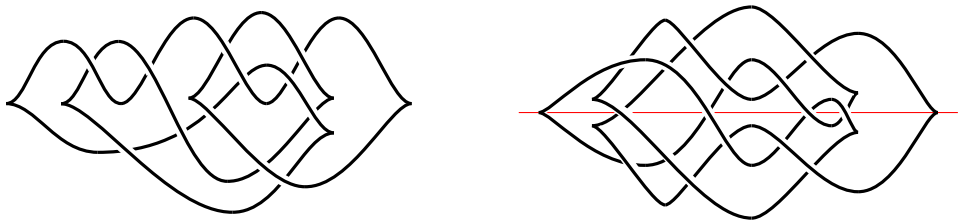
It can be verified using {\tt KnotMatcher} that $\K_1$ is Legendrian isotopic to the strongly invertible Legendrian knot $\K_2$ with transvergent front diagram shown on the right of Figure~\ref{f:m9_44-pair}. 
Thus, we can choose an orientated version $\overrightarrow\K_2$ of $\K_2$ Legendrian isotopic to $\overrightarrow\K_1$. 
Therefore we have $\overrightarrow\K_1 = \overrightarrow\K_2 = -\mu(\overrightarrow\K_2) = -\mu(\overrightarrow\K_1)$.  
But by~\cite[Proposition~7.4]{DS23} the Legendrian knot $\overrightarrow\K_1$ is not isotopic to its reverse $-\overrightarrow\K_1$.
This implies that~$\overrightarrow\K_1$ is not Legendrian isotopic to $\mu(\overrightarrow\K_1)$, contrary to the claim made in \cite{LKA13}.
Summarizing, the previous considerations show that the entry for $m(9_{44})$ in~\cite{LKA13} should be amended as follows: 
\begin{table*}[!h]
\begin{tabular}{c | cr c c c c c c}
\hline \hline
			Knot & Grid & \multirow{2}{*}{\raisebox{1em}{$(tb,r)$}} & \multirow{2}{*}{\raisebox{1em}{$L=-L$?}} & \multirow{2}{*}{\raisebox{1em}{$L=\mu(L)$?}} & \multirow{2}{*}{\raisebox{1em}{$L=-\mu(L)$?}} & $\begin{matrix} \text{Ruling} \\ \text{polynomial} \end{matrix}$ & LCH  \\
   \hline
			$m(9_{44})$ &
\raisebox{-2em}{\begin{tikzpicture}[scale = .25]
\draw[white, opacity =0 ] (0,9) -- (8,9);
\draw[red] (6,4) -- (0,4);
\draw[red] (3,7) -- (1,7);
\draw[red] (0,8) -- (2,8);
\draw[red] (7,2) -- (3,2);
\draw[red] (8,0) -- (4,0);
\draw[red] (2,6) -- (5,6);
\draw[red] (1,1) -- (6,1);
\draw[red] (4,5) -- (7,5);
\draw[red] (5,3) -- (8,3);
\draw[line width = 3, white] (0,4) -- (0,8);
\draw[-latex, red] (0,4) -- (0,8);
\draw[line width = 3, white] (1,7) -- (1,1);
\draw[-latex, red] (1,7) -- (1,1);
\draw[line width = 3, white] (2,8) -- (2,6);
\draw[-latex, red] (2,8) -- (2,6);
\draw[line width = 3, white] (3,2) -- (3,7);
\draw[-latex, red] (3,2) -- (3,7);
\draw[line width = 3, white] (4,0) -- (4,5);
\draw[-latex, red] (4,0) -- (4,5);
\draw[line width = 3, white] (5,6) -- (5,3);
\draw[-latex, red] (5,6) -- (5,3);
\draw[line width = 3, white] (6,1) -- (6,4);
\draw[-latex, red] (6,1) -- (6,4);
\draw[line width = 3, white] (7,5) -- (7,2);
\draw[-latex, red] (7,5) -- (7,2);
\draw[line width = 3, white] (8,3) -- (8,0);
\draw[-latex, red] (8,3) -- (8,0);
\end{tikzpicture}
}
& $(-3,0)$ & \ding{55} & \ding{55} & $\checkmark$  & $1$ & $t^{-1}+2t$ \\
\end{tabular}
\end{table*}

\printbibliography
\Addresses
\end{document}